\newtheorem{theorem}{Theorem}[section]
\newtheorem{corollary}{Corollary}[section]
\newtheorem{definition}{Definition}[section]
\newtheorem{lemma}{Lemma}[section]
\newtheorem{proposition}{Proposition}[section]
\newtheorem{remark}{Remark}[section]
\numberwithin{equation}{section}
\DeclareMathOperator*{\dist}{dist}
\renewcommand{\d}{\, \mathrm{d}}
\newcommand{\wtK}{\widetilde{K}}
\newcommand{\Phie}{\Phi^{\varepsilon}}
\newcommand{\cA}{\mathcal{A}}
\newcommand{\cD}{\mathcal{D}}
\newcommand{\cH}{\mathcal{H}}
\newcommand{\cM}{\mathcal{M}}
\newcommand{\cO}{\mathcal{O}}
\newcommand{\bH}{\bm{H}}
\newcommand{\bV}{\bm{V}}
\newcommand{\bP}{\mathbb{P}}
\newcommand{\bR}{\mathbb{R}}
\newcommand{\pt}{\partial}
\newcommand{\vn}{v_{n}}
\newcommand{\etan}{\eta_{n}^{t}}
\newcommand{\psin}{\psi_{n}}
\newcommand{\hpsi}{\hat{\psi}}
\newcommand{\vN}{v_{N}}
\newcommand{\vL}{v_{L}}
\newcommand{\etaL}{\eta_{L}}
\newcommand{\etaN}{\eta_{N}}
\newcommand{\psiN}{\psi_{N}}
\newcommand{\psiL}{\psi_{L}}
\newcommand{\phiN}{\phi_{N}}
\newcommand{\phiL}{\phi_{L}}
\newcommand{\abs}[1]{\left\vert #1 \right\vert}
\newcommand{\norm}[1]{\left\Vert #1 \right\Vert}
\newcommand{\normM}[1]{\left\Vert #1 \right\Vert_{\cM}}
\newcommand{\inner}[2]{\left\langle #1 , #2 \right\rangle}
\newcommand{\innerM}[2]{\left\langle #1 , #2 \right\rangle _{\mathcal{M}}}
\newcommand{\innerMs}[2]{\left\langle #1 , #2 \right\rangle _{\mathcal{M}^{1}}}
\newcommand{\tran}[2]{\left( #1 , #2 \right)^{\tau}}
\newcommand{\onehalf}{\frac{1}{2}}
\begin{document}
\setlength{\baselineskip}{16pt}

\title{
	\textbf{Asymptotic behavior for
	2D stochastic Navier-Stokes equations with memory in unbounded domains}
}

\author{Yadong Liu$^1$, Wenjun Liu$^1$\footnote{Corresponding author. \ \   Email address: wjliu@nuist.edu.cn (W. J. Liu).}, Xin-Guang Yang $^2$\ and\ Yasi Zheng$^1$ \medskip\\
	1. School of Mathematics and Statistics, Nanjing University \\ of Information Science and Technology, Nanjing 210044, China
	\medskip\\
	2. College of Mathematics and Information Science, Henan Normal University, \\
	Xinxiang 453007, China
}    

\date{\today}
\maketitle

\begin{abstract}
We consider a stochastic model which describes the motion of a 2D incompressible fluid in a unbounded domain with viscosity and memory effects. This model is different from the classical stochastic Navier-Stokes-Voigt equations due to  the absence of the Voigt term $ -\alpha \Delta u_{t}$, and has a much weaker dissipation than the usual Navier-Stokes-Voigt model since only the memory viscoelasticity is present. We are interested in the global well-posedness and long-time behaviors of this model. We first investigate the well-posedness by using the classical Faedo-Galerkin method. Unlike the general method of energy estimate, we then split the solution into two parts and get the low-order and high-order uniform estimates, respectively. Based on the uniform estimates of far-field values of solutions, we further prove the existence and uniqueness of random attractors in unbounded domains with a constructed compact subspace corresponding to memory. Finally, we give the upper semicontinuity of the attractors when stochastic perturbation approaches to zero.
\end{abstract}

\noindent {\it 2010 Mathematics Subject Classification:} 35B40, 35B41, 35R60, 37L55 . \\
\noindent {\bf Keywords:} Navier-Stokes equations, well-posedness, random attractor, memory effects, semicontinuity.

\maketitle

\section{Introduction}
This paper is concerned with the asymptotic behavior of stochastic Navier-Stokes equations with memory in unbounded domains in $ \bR^{2} $. Let $ \cO $ be an arbitrary domain (bounded or unbounded) in $ \bR^2 $, in which the Poincar\'{e} inequality holds
\begin{equation*}
	\int_{ \cO } \left \vert \nabla u \right \vert ^2 \d x \geq \lambda_1 \int_{ \cO } \left \vert u \right \vert ^2 \d x, \quad \forall \ u \in [ H_{0}^{1}(\cO) ]^2.
\end{equation*}
For $ t > 0 $, we consider the stochastic Navier-Stokes equations with memory effect
\begin{equation}\label{SNSM}
\left\{
	\begin{aligned}
		&
		\begin{aligned}
			& \pt_t u - \nu \Delta u - \int_{0}^{\infty} g(s) \Delta u ( t - s ) \d s  \\
			& \qquad \qquad \qquad \qquad + ( u \cdot \nabla ) u + \nabla p = f + \varepsilon h \frac{\d W}{\d t},
		\end{aligned}
		&& x \in \cO, \ t > 0, \\
		& \nabla \cdot u = 0, && x \in \cO, \ t > 0, \\
		& u( x , t ) = 0, && x \in \pt \cO, \ t > 0, \\
		& u( x , 0 ) = u_0(x), u( x , - s ) = \rho ( x , s ), && x \in \cO, \ s > 0,
	\end{aligned}
\right.
\end{equation}
where $ u = u ( x , t ) $ and $ p = p ( x , t ) $ are the unknown velocity and pressure respectively, while $ \nu $ denotes the positive viscosity coefficient, $ f $ and $ h $ are two given functions which will be declared later, $ \varepsilon \in (0,1] $ represents a small parameter. $ W $ stands for a two-side real-value Wiener process on a complete probability space which will be specified later. Here $ u_0(x) $ and $ \rho(x,s) $ denotes the prescribed data of initial velocity and past history for the purpose of completing system \eqref{SNSM}. Concerning the kernel $ g : [ 0 , \infty ) \rightarrow \bR $, we assume that it is convex, nonnegative, and smooth on $ \bR^{+} = ( 0 , \infty ) $. Also it is supposed to satisfy
$$
	\lim\limits_{s \rightarrow \infty} g(s) = 0 \quad \mbox{ and } \quad \int_{0}^{\infty} g( s ) \d s = 1.
$$

For deterministic case, Oskolkov \cite{Oskolkov1973} first studied the incompressible fluid with Kelvin-Voigt viscoelasticity which was illustrated by Navier-Stokes-Voigt system
\begin{equation}\label{NSV}
	\left\{
	\begin{aligned}
		& \pt_t u - \Delta u - \alpha \Delta (\pt_t u) + ( u \cdot \nabla ) u + \nabla p = f, \\
		& \nabla \cdot u =0.
	\end{aligned}
	\right.
\end{equation}
The existence of finite dimensional global attractors was investigated by Kalantarov and Titi in \cite{KT2009} and Anh and Trang \cite{AT2013} showed the existence of a weak solution to the problem by using the Faedo-Galerkin method in unbounded domains. After that, many authors considered system \eqref{NSV} in different aspects. Readers are referred to \cite{SQ2018,Yang2019,You2017,Zhou2018} and references therein.

Memory term arose in the description of several phenomena like, e.g., heat conduction in special materials (see e.g., \cite{Caraballo2007,GP2000,Liu2017a,Liu2017b}), viscoelasticity of vibration in several materials (see e.g., \cite{F2018,LZ2017,Messaoudi2008}). Actually, the presence of the memory destroys the parabolic character of the system and provides a more realistic description of the viscosity while $ - \nu \Delta u $ indicates the instantaneous viscous effect. Astarita and Marucci \cite[pp. 132]{AM1974} induced a first-order approximation to the constitutive equation of a simple fluid with fading memory
\begin{equation}\label{constitutive}
	\mathbb{T} = -p \mathbb{I} + \int_{0}^{\infty} f(s) G^{t} \d s,
\end{equation}
where $ \mathbb{T} $ is the stress and $ p $ is the pressure; $ G^{t} $ denotes the deformation history at some instant of observation $ t $; $ f(\cdot) $ is characteristic of the particular material. The constitutive equation \eqref{constitutive} was called ``linear viscoelasticity'' by the authors.
In 2005, Gatti, Giorgi and Pata \cite{GGP2005} proposed a Jeffreys type model depicting the motion of a   two dimensional viscoelastic polymeric fluid with memory effect of the form
\begin{equation}\label{NSMJ}
	\left\{
	\begin{aligned}
		& \pt_t u - \nu \Delta u - ( 1 - \nu ) \int_{0}^{\infty} k_{\varepsilon} (s) \Delta \eta(s) \d s + (u \cdot \nabla) u + \nabla p = f, \\
		& \pt_t \eta = - \pt_{s} \eta + u, \\
		& \nabla \cdot u = 0,\ \nabla \cdot \eta = 0,
	\end{aligned}
	\right.
\end{equation}
where $ \nu \in ( 0 , 1 ) $ is a fixed parameter. In the system, the so-called memory kernel is defined as
$$
	k_{\varepsilon}(s) = \frac{1}{\varepsilon ^2} k \left( \frac{s}{\varepsilon} \right), \quad \varepsilon \in ( 0 , 1 ].
$$
They described the asymptotic dynamics and proved that when the scaling parameter $ \varepsilon $ in the memory kernel (physically, the Weissenberg number of the flow) tends to zero, the model converges to the Navier-Stokes equations in an appropriate sense.
More recently, Gal and Medjo \cite{GM2013} put forward the following NSV system incorporating hereditary effects by adding the Voigt term $ - \alpha \Delta u _{t} $ in \eqref{NSMJ}:
\begin{equation}\label{NSVM}
	\left\{
	\begin{aligned}
		& \pt_t u - \nu \Delta u - ( 1 - \nu ) \int_{0}^{\infty} k_{\varepsilon} (s) \Delta \eta(s) \d s - \alpha \Delta (\pt_t u) + ( u \cdot \nabla ) u + \nabla p = f, \\
		& \pt_t \eta = - \pt_{s} \eta + u, \\
		& \nabla \cdot u = 0,\ \nabla \cdot \eta = 0,
	\end{aligned}
	\right.
\end{equation}
for some $ \nu \in ( 0 , 1 ) $. They took both Newtonian contributions and viscoelastic effects into account and considered a Cauchy stress tensor to derive the model. By the fact that the coupled effects of instantaneous viscous term $ \Delta u $, Voigt term $ \Delta (\pt_t u) $ and hereditary kinematic viscous term $ \int_{0}^{\infty} g(s) \Delta u( t - s ) \d s $ are strong enough to stabilize the system, Di Plinio, Giorgini, Pata and Temam \cite{PGPT2018} investigated the long-time behavior of the following system without $ \Delta u$:
\begin{equation}\label{NSM}
	\left\{
	\begin{aligned}
		& \pt_t u -  \int_{0}^{\infty} g(s) \Delta u( t - s ) \d s - \alpha \Delta (\pt_t u) + ( u \cdot \nabla ) u + \beta u + \nabla p = f, \\
		& \nabla \cdot u =0,
	\end{aligned}
	\right.
\end{equation}
where $ \beta u $ is the Ekman term. They showed that the solution of \eqref{NSM} decays exponentially if $ f \equiv 0 $ and the system is dissipative from the viewpoint of dynamical systems. What's more, they concluded that the system also possesses regular global and exponential finite fractal dimensional attractors.

All the external forcing terms $ f $ above are deterministic, but actually, it is more meaningful that the system meets different random perturbations. So people considered different stochastic effects and combined the theory of random dynamics. Since 90s last century, there were many researches on stochastic Navier-Stokes equations.
Flandoli and Schmalfuss \cite{FS1996} first combined random dynamical theory and Navier-Stokes equations and showed that there exist random attractors for 3D stochastic Navier-Stokes equations on bounded domains. However, they did not give the existence and uniqueness of solutions. Mar\'{i}n-Rubio and Robinson \cite{MR2003} investigated the attractors for a 3D stochastic Navier-Stokes equations with additive white noise by a generalized semiflow.
Bre\'{z}niak and Li \cite{BL2006} proved the existence of the stochastic flow associated with 2D stochastic Navier-Stokes equations in possibly unbounded Poincar\'{e} domains by the classical Galerkin approximation. They then deduced the existence of an invariant measure for such system in two dimensional case.
In \cite{BCLL2013}, Bre\'{z}niak, Carabollo, Lange and Li completed the result in \cite{BL2006}. They considered the random attractors of 2D Navier-Stokes equations in some unbounded domains and showed that the stochastic flow generated by the 2-dimensional Stochastic Navier–Stokes equations with rough noise on a Poincar\'{e}-like domain has a unique random attractor.
It is known that the uniqueness of Navier-Stokes equations in three dimensional case is still open. So people turned to 3D Navier-Stokes-Voigt equations, in which it has the term $ -\alpha \Delta (\pt_t u) $, resulting in the uniqueness of the system.
Gao and Sun \cite{Gao2012} examined the well-posedness of the 3D Navier-Stokes-Voigt system by classical Faedo-Galerkin method. They also investigated the random random attractors of three dimensional stochastic Navier-Stokes-Voigt equations. Further, Bao \cite{Tang2014} continued corresponding works in unbounded domains. They use the so-called energy equation method, which was introduced by Ball \cite{Ball2004} to establish the existence of random attractors. By means of the method in \cite{Wang2009}, they proved the upper semicontinuity of random attractors. For other works on stochastic Navier-Stokes-Voigt system please see \cite{AT2019,LS2018,Yang2018} and the references therein.

However, studies on stochastic Navier-Stokes equations with memory is still lack. Motivated by the literature above, we investigate the well-posedness and asymptotic behaviors of two dimensional stochastic Navier-Stokes equations \eqref{SNSM} in unbounded domains in this paper. More precisely, compared to \eqref{NSM}, we have the viscosity dissipation, memory effects and random perturbation, while we do not have $ - \Delta (\pt_t u) $ (Voigt term) and $ \beta u $ (Ekman term). The main features of our work are summarized as follows.
\begin{enumerate}\setlength{\itemsep}{-0.2cm}
	\item[(i)] Notice that Sobolev embeddings are no longer compact in unbounded domains. It leads to a major difficulty for us to prove the asymptotic compactness of solutions by standard method. To overcome this difficulty, we refer to \cite{BLW2009,Wang1999} which provide uniform estimates on the far-field values of solutions. Moreover, we establish a generalized Poincar\'{e} inequality to construct the weighted energy since we do not have $ \beta u $ in the system.
	\item[(ii)] The procedure in \cite{BLW2009} indicates that we still need the compact embedding from higher regular space to common space in a bounded ball. Due to the memory term, the common regular space is $ \cH = \bm{H} \times \mathcal{M} $ and higher regular space is $ \cH^{1} = \bm{V} \times \mathcal{M}^{1} $. Though the embedding $ \bm{V} \hookrightarrow \bm{H} $ is compact, we can't say that the embedding $ \mathcal{M}^{1} \hookrightarrow \mathcal{M} $ is also compact. Nevertheless, we can recover the compactness with methods in \cite{Liu2017a,PataZ2001}, for which we introduce a compact subspace $ \overline{\mathcal{N}} \subset \mathcal{M} $ and obtain a compact embedding $ \widetilde{\cH} \hookrightarrow \cH $ in a bounded ball.
	\item[(iii)] In this manuscript, $ \psi_{0} \in \cH $, so we can not obtain the higher order estimate by using classical energy method. To this end, we split the system into a ``linear'' system and a zero initial data nonlinear system \cite{GP2000,KT2009,Liu2017a,Temam1988}. The energy of the ``linear'' system decays exponentially to 0 in $ \cH $ while the energy of nonlinear system is bounded in $ \cH^{1} $. Then we can using this property to deduce the compactness.
\end{enumerate}
Symbols above are all assigned in Section \ref{notation}.

The paper is arranged as follows. In Section \ref{notation}, we recall the relevant mathematical framework for Navier-Stokes equations and memory kernel. In Section \ref{random}, we take some fundamental results on the existence and semicontinuity of pullback random attractors for random dynamical systems, also we show that \eqref{SNSM} generates a random dynamical system by several transformations. To derive the global well-posedness, we use classical Faedo-Galerkin method in Section \ref{wellposd}. Some necessary uniform \textit{a prior} and far-field estimates are proposed in Section \ref{attractor}. By means of solution splitting method, far-field estimates and a compact embedding we construct, we then prove the existence and uniqueness of random attractor for \eqref{SNSM} in Section \ref{attractor}. In Section \ref{upper}, we further show the upper semicontinuity of the attractors when the stochastic perturbation parameters $ \varepsilon $ tends to zero. As usual, letter $ c $ in the paper represents generic positive constant which may change its value from line to line or even in the same line, unless we give a special declaration.

\section{Mathematical Settings and Notations}\label{notation}
In this section, we present some mathematical settings and notations as what in \cite{PGPT2018}. $ L^2(\cO) $, $H_0^1(\cO)$, $ H ^r(\cO) $ are standard Lebesgue-Sobolev spaces in $ \cO $.
Let
$$
	\mathcal{V} = \left\{ u \in \left[ C_0^{\infty}(\cO)\right]^2:\ \nabla \cdot u = 0 \right\}.
$$
We denote by $ \bm{H} $ the completion of $ \mathcal{V} $ in the norm of $ \left[ L^2(\cO) \right]^2 $ and by $ \bm{V} $ the completion of $ \mathcal{V} $ in the norm of $ \left[ H_0^1(\cO) \right]^2 $. Inner product and norm of $ \bm{H} $ and $ \bm{V} $ are
$$
	\inner{u}{v}= ( u , v ) \quad \mbox{and} \quad \Vert u \Vert = ( u , u ),
$$
and
$$
	\inner{u}{v} _{1} = \inner{\nabla u}{\nabla v} \quad \mbox{and} \quad \Vert u \Vert _{1} = \Vert \nabla u \Vert,
$$ respectively.
We also denote by $ \bm{H}' $ the dual space of $ \bm{H} $ and by $ \bm{V}' $ the dual space of $ \bm{V} $. It follows that
$ \bm{V} \subset \bm{H} \equiv \bm{H}' \subset \bm{V}'$,
where the injections are dense and continuous. We define the more regular space by
$$
	\bm{W} = \bm{V} \cap \left[ H^2 (\cO) \right]^2.
$$

Recalling the Leray orthogonal projection $ \mathcal{P} : \left[ L^2(\cO) \right]^2 \rightarrow \bm{H} $ from \cite{lions1969,Temam1988,Temam1995}, we take the Stokes operator $ A $ on $ \bm{H} $ by
$
	A = - \mathcal{P} \Delta \mbox{ with domain } \mathfrak{D} (A) = \bm{W}.
$
Then $ A $ is a positive self-adjoint operator with compact inverse and $ ( A u , u ) = ( A^{\frac{1}{2}} u , A^{\frac{1}{2}} u ) $ for all $ u \in \bm{V} $ (see e.g., \cite{Temam2001}). Hence, we define the compactly nested Hilbert spaces
$$
	V^{r} = \mathfrak{D} ( A^{\frac{r}{2}} ), \quad r \in \bR,
$$
endowed with inner product and norm
$$
	\inner{u}{v} _{r} = \inner{A^{\frac{r}{2}} u}{A^{\frac{r}{2}} v} \quad \mbox{and} \quad \Vert u \Vert _{r} = \Vert A^{\frac{r}{2}} u \Vert.
$$

As usual, we define the continuous trilinear form $ b $ on $ \bm{V} \times \bm{V} \times \bm{V} $ by
$$
	b( u , v , w) = \int_{ \cO } ( u \cdot \nabla ) v \cdot w \d x = \sum_{i,j = 1}^{2} \int_{ \cO } u_{i} \frac{\pt v_{j}}{\pt x_{i}} w_{j} \d x.
$$
By integration by parts, we easily prove that
$$
	b( u , v , w) = - b( u , w , v),
$$
and hence
$$
	b( u , v , v) = 0, \quad \forall \ u \in \bm{V}, v \in \left[ H_0^1(\cO) \right]^2.
$$
The bilinear form $ B : \bm{V} \times \bm{V} \rightarrow \bm{V}' $ is defined as
$$
	\inner{B( u , v )}{w} = b( u , v , w).
$$

Then we introduce the common estimates for trilinear form $ b( u , v , w) $.
\begin{lemma}[see e.g., \cite{FMRT2001,RRS2016,Temam1988,Temam1995,Temam2001}]
	\label{b}
	For all $ u, v, w \in \bm{V} $, we have
	\begin{equation*}
		\begin{aligned}
			& \abs{b(u,v,w)} \leq \hat{c} \norm{u}^\onehalf \norm{A^\onehalf u}^\onehalf
			\norm{v}^\onehalf \norm{A^\onehalf v}^\onehalf \norm{A^\onehalf w}; \\
			& \abs{b(u,v,w)} \leq \hat{c} \norm{u}^\onehalf \norm{A^\onehalf u}^\onehalf
			\norm{A^\onehalf v} \norm{w}^\onehalf \norm{A^\onehalf w}^\onehalf.
		\end{aligned}
	\end{equation*}
\end{lemma}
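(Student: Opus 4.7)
The plan is to derive both bounds from H\"older's inequality together with the two-dimensional Ladyzhenskaya inequality $\norm{f}_{L^{4}}\leq c\norm{f}^{\onehalf}\norm{\nabla f}^{\onehalf}$, augmented for the second estimate by an integration by parts that exploits $\nabla\cdot u=0$. For the first inequality, I would start from the pointwise bound $\abs{b(u,v,w)}\leq \int_{\cO}\abs{u}\abs{\nabla v}\abs{w}\,\d x$ and apply H\"older with exponents $(4,4,2)$ to get $\abs{b(u,v,w)}\leq\norm{u}_{L^{4}}\norm{\nabla v}_{L^{4}}\norm{w}$. Ladyzhenskaya applied componentwise then yields $\norm{u}_{L^{4}}\leq c\norm{u}^{\onehalf}\norm{A^{\onehalf}u}^{\onehalf}$ (using $\norm{\nabla u}=\norm{A^{\onehalf}u}$ on $\bm{V}$) and $\norm{\nabla v}_{L^{4}}\leq c\norm{A^{\onehalf}v}^{\onehalf}\norm{Av}^{\onehalf}$ (using the equivalence $\norm{\nabla^{2}v}\sim\norm{Av}$ on $\mathfrak{D}(A)$, i.e.\ Stokes regularity). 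Multiplying the three factors and absorbing the Ladyzhenskaya constants into $\hat{c}$ produces the first bound.

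For the second inequality, I would first recall the skew-symmetry $b(u,v,w)=-b(u,w,v)$, which follows from integration by parts and $\nabla\cdot u=0$ and is already written down just above the lemma. Applying the first inequality to $b(u,w,v)$ (i.e.\ swapping the roles of $v$ and $w$), I obtain
\[
\abs{b(u,v,w)}=\abs{b(u,w,v)}\leq \hat{c}\,\norm{u}^{\onehalf}\norm{A^{\onehalf}u}^{\onehalf}\norm{A^{\onehalf}w}^{\onehalf}\norm{Aw}^{\onehalf}\norm{v}.
\]
Two applications of the Poincar\'e inequality (available on $\cO$ by hypothesis) then give $\norm{v}\leq \lambda_{1}^{-1}\norm{Av}$, and absorbing the factor $\lambda_{1}^{-1}$ into $\hat{c}$ delivers the desired bound.

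The main difficulty is really bookkeeping rather than substance. One must verify that the Ladyzhenskaya inequality remains available on the possibly unbounded Poincar\'e domain $\cO$, which it does via Gagliardo--Nirenberg after zero-extension of $H_{0}^{1}$ functions, and that the algebraic identifications $\norm{\nabla u}=\norm{A^{\onehalf}u}$ on $\bm{V}$ and $\norm{Av}\sim\norm{v}_{H^{2}}$ on $\bm{W}$ are legitimate; both are standard Stokes-operator facts drawn from the references already cited in the setup (Temam \cite{Temam1988,Temam1995,Temam2001}), so the proof can be kept very short.
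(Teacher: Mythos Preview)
Your argument is correct and is precisely the standard route found in the cited references. The paper itself does not supply a proof of this lemma; it simply states the estimates and defers to \cite{FMRT2001,Temam1988,Temam1995,Temam2001}, so there is nothing to compare against beyond noting that your derivation (H\"older with exponents $(4,4,2)$, the 2D Ladyzhenskaya inequality, the identification $\norm{\nabla u}=\norm{A^{\onehalf}u}$ and the Stokes regularity $\norm{\nabla^{2}v}\sim\norm{Av}$, then skew-symmetry plus Poincar\'e for the second bound) is exactly what those texts do. One small remark: your detour through $\norm{v}$ followed by two Poincar\'e applications to reach $\norm{Av}$ actually shows that the second inequality as written in the paper is weaker than what you first obtain, namely $\abs{b(u,v,w)}\leq \hat c\,\norm{u}^{\onehalf}\norm{A^{\onehalf}u}^{\onehalf}\norm{v}\,\norm{A^{\onehalf}w}^{\onehalf}\norm{Aw}^{\onehalf}$; this is not a gap, just an observation that the stated form sacrifices sharpness.
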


In what follows, we describe the mathematical framework with respect to memory term. The function $ g $ is supposed to have the explicit form
$$
\mu (s) = - g ' (s).
$$
We assume that $ \mu $ here is nonnegative, absolutely continuous and decreasing. Hence $ \mu ' \leq 0 $ for almost every $ s \in \bR^{+} $. Moreover, $ \mu $ is summable on $ \bR^{+} $ with
$$
\kappa = \int_{0}^{\infty} \mu (s) \d s > 0.
$$
In our work, we consider the classical Dafermos condition (see e.g., \cite{Dafermos1970})
\begin{equation}\label{mu}
\mu ' (s) + \delta \mu (s) \leq 0,
\end{equation}
for some $ \delta > 0 $ and almost every $ s > 0 $. Then we define the weighted Hilbert space for memory on $ \bR^{+} $
$$
	\mathcal{M} = L_{\mu}^{2} ( \bR^{+} , \bm{V} ),
$$
endowed with inner product and norm
$$
	\innerM{\eta}{\xi} = \int_{0}^{\infty} \mu (s) \inner{\eta (s)}{\xi (s)} _{1} \d s
	\ \mbox{  and  } \
	\Vert \eta \Vert _{\mathcal{M}} = \left( \int_{0}^{\infty} \mu (s) \Vert \eta (s) \Vert _{1}^{2} \d s \right)^{\frac{1}{2}}.
$$
The infinitesimal generator of the right-translation semigroup on $ \mathcal{M} $ is the linear operator
$$
	T \eta = - \pt _{s} \eta
	\mbox{ with domain }
	\mathfrak{D} (T) = \left\{ \eta \in \mathcal{M} : \pt _{s} \eta \in \mathcal{M}, \ \eta (0) = 0 \right\},
$$
where $ \pt _{s} \eta $ stands for the derivative of $ \eta (s) $ in regard to $ s $.

In the end, we introduce the phase space
$$
	\cH = \bm{H} \times \mathcal{M},
$$
endowed with norm
$$
	\Vert( u , \eta ) \Vert _{\cH}^2 = \Vert u \Vert ^2 + \Vert \eta \Vert _{\mathcal{M}}^2.
$$
In this paper, we also utilize a more regular memory space denoted by
$$
	\mathcal{M}^{1} = L_{\mu}^{2} ( \bR^{+} , \bm{W} ),
$$
with norm analogous to that of $ \mathcal{M} $. What's more, the related higher order phase space is denoted by
$$
	\cH^{1} = \bm{V} \times \mathcal{M}^{1}
$$
with norm
$$
	\Vert( u , \eta ) \Vert _{\cH^{1}}^2 = \Vert u \Vert _{1}^2 + \Vert \eta \Vert _{\mathcal{M}^{1}}^2.
$$

\section{Random Dynamical System}\label{random}
\subsection{Random attractors} \label{randomdy}
In this subsection, we recall some basic concepts on the theory of random attractors for random dynamical systems. For a piece of detailed information and related applications, readers are referred to \cite{Arnold1998,BLW2009,Wang2009,Wang2014}.

Let $ ( X , \Vert \cdot \Vert _{X} ) $ be a separable Banach space with the Borel $ \sigma $-algebra $ \mathcal{B} (X) $ and $ ( \Omega , \mathcal{F} , \bP ) $ be a probability space.
\begin{definition}
	$ ( \Omega , \mathcal{F} , \bP , ( \theta_{ t } )_{t \in \bR} ) $ is said to be a \textbf{metric dynamical system} if $ \theta : \bR \times \Omega \rightarrow \Omega $ is $ ( \mathcal{B} ( \bR ) \times \mathcal{F} , \mathcal{F} ) $-measurable and satisfies that $ \theta_{0} $ is the identity on $ \Omega $, $ \theta_{ t + s } = \theta_{ t } \circ \theta_{ s } $ for all $ t , s \in \bR $ and $ \theta_{ t } (\bP) = \bP $ (measure preserved) for all $ t \in \bR $. Here $ \circ $ means composition.
\end{definition}
\begin{definition}
	\label{RDS}
	A mapping
	$$
	\Phi : \bR^{+} \times \Omega \times X \rightarrow X,
	\quad
	( t , \omega , x ) \mapsto \Phi ( t , \omega ) x,
	$$ is known as a \textbf{random dynamical system} over a metric dynamical system $ ( \Omega , \mathcal{F} , \bP , ( \theta_{ t } )_{t \in \bR} ) $ if for $ \bP $-a.e. $ \omega \in \Omega $,
	\begin{enumerate}
		\item[(i)] $ \Phi ( 0 , \omega )  = \mathrm{Id}_{ X } $ on $ X $;
		\item[(ii)] $ \Phi ( t + s , \omega ) = \Phi ( t , \theta_{ s } \omega ) \circ \Phi ( s , \omega ) $, for all $ t , s \in \bR^{+} $ (cocycle property).
	\end{enumerate}
	A random dynamical system $ \Phi $ is continuous if $ \Phi ( t , \omega ) : X \rightarrow X $ is continuous for all $ t \in \bR^{+}$, $ \omega \in \Omega $.
\end{definition}

\begin{definition}
	A bounded random set is a \textbf{random set} $ B : \Omega \rightarrow 2 ^{X} $ which satisfies that there is a random variable $ r( \omega ) \in [ 0 , \infty ) $, $ \omega \in \Omega $, such that
	$$
		d( B( \omega ) ) := \sup \left\{ \Vert x \Vert _{X} : x \in B( \omega ) \right\} \leq r( \omega ) \mbox{ for all } \omega \in \Omega.
	$$
	A bounded random set $ \left\{ B( \omega ) \right\} _{ \omega \in \Omega } $ is said to be tempered in regard to the metric dynamical system $ ( \Omega , \mathcal{F} , \bP , ( \theta_{ t } )_{t \in \bR} ) $ if for $ \bP $-a.e. $ \omega \in \Omega $,
	$$
		\lim\limits_{ t \rightarrow \infty } e ^{ - \mu t } d(B( \theta_{ - t } \omega )) = 0 \mbox{ for all } \mu > 0.
	$$
\end{definition}

In this manuscript, $ \cD $ always denotes the collection of random sets of $ \cH $, i.e.,
$$
	\cD = \biggl\{ B = \{B(\omega)\}_{\omega \in \Omega}: B(\omega) \subseteq \cH (\cO) \mbox{ and } B \mbox{ is tempered } \biggr\}.
$$
\begin{definition}
	A random set $ \left\{ K( \omega ) \right\} _{ \omega \in \Omega } \in \cD $ is defined as a \textbf{random absorbing set} for $ \Phi $ in $ \cD $ if for every $ B \in \cD $ and $ \bP $-a.e. $ \omega \in \Omega $, there is a $ T ( B , \omega ) > 0 $ such that
	$$
		\Phi ( t , \theta_{ - t } \omega ) B( \theta_{ - t } \omega ) \subseteq K( \omega ), \mbox{ for all } t \geq T ( B , \omega ).
	$$
\end{definition}
\begin{definition}
	A random dynamical system $ \Phi $ is ($ \cD $-pullback) \textbf{asymptotically compact} in $ X $ if for $ \bP $-a.e. $ \omega \in \Omega $, $ \left\{ \Phi ( t_{n} , \theta_{ - t_{n} } \omega ) x_{n} \right\}_{ n = 1 }^{ \infty } $ has a convergent subsequence in $ X $ whenever $ t_{n} \rightarrow \infty $, and $ x_{n} \in B( \theta_{ - t_{n} } \omega ) $ with $ \left\{ B( \omega ) \right\} _{ \omega \in \Omega } \in \cD $.
\end{definition}
\begin{definition}
	A $ \cD $-\textbf{pullback attractor} for $ \Phi $ is a random set $ \left\{ \cA( \omega ) \right\} _{ \omega \in \Omega } $ of $ X $ which satisfied that for $ \bP $-a.e. $ \omega \in \Omega $,
	\begin{enumerate}
		\item[(i)] $ \cA( \omega ) $ is compact, and $ \omega \mapsto d( x, \cA(\omega) ) $ is measurable for every $ x \in X $;
		\item[(ii)] $ \left\{ \cA( \omega ) \right\} _{ \omega \in \Omega } $ is invariant, i.e.,
		$$
			\Phi ( t , \omega ) \cA( \omega ) = \cA( \theta_{ t } \omega )
			\mbox{ for all }
			t \geq 0;
		$$
		\item[(iii)] $ \left\{ \cA( \omega ) \right\} _{ \omega \in \Omega } $ attracts every set in $ \cD $, i.e., for every $ B = \left\{ B( \omega ) \right\} _{ \omega \in \Omega } \in \cD $,
		$$
			\lim\limits_{t \rightarrow \infty} \dist ( \Phi ( t , \theta_{ - t } \omega ) B( \theta_{ - t } \omega ), \cA ( \omega ) ) = 0,
		$$
	\end{enumerate}
	where $ \dist ( \cdot , \cdot ) $ is the Hausdorff semi-distance denfined on $ X $, i.e., for two nonempty sets $ Y , Z \subseteq X $,
	$$
		\dist ( Y , Z ) = \sup_{ y \in Y } \inf_{ z \in Z } \Vert y - z \Vert _{X}.
	$$
\end{definition}
Recall that a collection $ \cD $ of random sets in $ X $ is said to be inclusion-closed if $ E( \omega ) _{ \omega \in \Omega } $ belong to $ \cD $ when $ E( \omega ) _{ \omega \in \Omega } $ is a random set, and $ F( \omega ) _{ \omega \in \Omega } $ belongs to $ \cD $ with $ E( \omega ) \subset F( \omega ) $ for all $ \omega \in \Omega $. The following proposition with respect to the existence and uniqueness of random attractor can be found in \cite{CLR1998,CF1994,FS1996,Wang2009,Wang2014}.

\begin{proposition}\label{attractors}\cite{Wang2009}
	Let $ \cD $ be an inclusion-closed collection of random subsets of $ X $. Assume that $ \left\{ K( \omega ) \right\} _{ \omega \in \Omega } $ is a closed random absorbing set for $ \Phi $ in $ \cD $ and $ \Phi $ is $ \cD $-pullback asymptotically compact in X. Then $ \Phi $ has a unique $ \cD $-\textbf{random attractor} $ \left\{ \cA( \omega ) \right\} _{ \omega \in \Omega } $ given by
	$$
		\cA( \omega ) = \bigcap_{ \tau \geq 0 } \overline{ \bigcup_{ t \geq \tau } \Phi( t , \theta_{ - t } \omega ) K( \theta_{ - t } \omega ) }, \mbox{ for every } \omega \in \Omega.
	$$
\end{proposition}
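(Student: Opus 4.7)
The plan is to verify the three defining properties of a random attractor---compactness (with measurability), invariance, and pullback attraction---and then derive uniqueness. The first step is to establish the working characterization of the candidate set as a pullback $\omega$-limit:
$$
	y \in \cA(\omega) \iff \exists\, t_n \to \infty,\ x_n \in K(\theta_{-t_n}\omega)\ \text{with}\ \Phi(t_n, \theta_{-t_n}\omega) x_n \to y \ \text{in } X,
$$
which is immediate from the definition as a decreasing intersection of closures. With this in hand, the $\cD$-pullback asymptotic compactness hypothesis applied to any such approximating sequence yields a convergent subsequence, so $\cA(\omega)$ is nonempty; a diagonal argument (picking approximants for countably many points of $\cA(\omega)$) combined with the same asymptotic compactness shows $\cA(\omega)$ is sequentially compact. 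Measurability of $\omega \mapsto \dist(x, \cA(\omega))$ is then obtained by standard Castaing-type considerations from the measurability of $\Phi$ and of $K$.

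The invariance $\Phi(t, \omega) \cA(\omega) = \cA(\theta_t \omega)$ is the point that requires the most care, and I expect this to be the main obstacle. For the forward inclusion I take $y \in \cA(\omega)$ with $y = \lim \Phi(t_n, \theta_{-t_n}\omega) x_n$, apply continuity of $\Phi(t, \omega)$, and use the cocycle identity
$$
	\Phi(t, \omega) \circ \Phi(t_n, \theta_{-t_n}\omega) = \Phi(t + t_n, \theta_{-(t+t_n)} \theta_t \omega)
$$
to rewrite $\Phi(t, \omega) y$ as a pullback limit of the required form in $\cA(\theta_t \omega)$. The reverse inclusion is the subtler half: given $z \in \cA(\theta_t \omega)$ with approximants $\Phi(s_n, \theta_{-s_n} \theta_t \omega) x_n \to z$, I split (for $s_n > t$)
$$
	\Phi(s_n, \theta_{-s_n} \theta_t \omega) = \Phi(t, \omega) \circ \Phi(s_n - t, \theta_{-(s_n-t)} \omega),
$$
apply asymptotic compactness to extract a subsequence $\Phi(s_n - t, \theta_{-(s_n-t)} \omega) x_n \to y \in \cA(\omega)$, and close the argument using continuity of $\Phi(t, \omega)$ to conclude $\Phi(t, \omega) y = z$.

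Finally, attraction is proved by contradiction: if some $B \in \cD$ is not pullback-attracted by $\cA$, there exist $\varepsilon > 0$, $t_n \to \infty$, and $x_n \in B(\theta_{-t_n}\omega)$ with $\dist(\Phi(t_n, \theta_{-t_n}\omega) x_n, \cA(\omega)) \geq \varepsilon$. Choosing a fixed $T$ for which $K$ absorbs $B$ after time $T$ and rewriting
$$
	\Phi(t_n, \theta_{-t_n}\omega) = \Phi(t_n - T, \theta_{-(t_n-T)}\omega) \circ \Phi(T, \theta_{-t_n}\omega),
$$
the sequence $y_n := \Phi(T, \theta_{-t_n}\omega) x_n$ lies in $K(\theta_{-(t_n-T)}\omega)$ for large $n$; asymptotic compactness then forces a subsequence of $\Phi(t_n - T, \theta_{-(t_n-T)}\omega) y_n$ to converge to some point of $\cA(\omega)$, contradicting the lower bound. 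Uniqueness then follows in the usual way: any other $\cD$-random attractor $\cA'$ must attract $K$ and hence satisfy $\cA \subseteq \cA'$, while invariance together with the minimality built into the pullback $\omega$-limit description gives the opposite inclusion, so $\cA = \cA'$.
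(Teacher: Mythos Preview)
The paper does not supply its own proof of this proposition; it is quoted verbatim as a known result from \cite{Wang2009} (and the surrounding references \cite{CLR1998,CF1994,FS1996,Wang2014}), with no argument given. Your proposal reproduces exactly the standard proof scheme from those references: the $\omega$-limit characterization of $\cA(\omega)$, compactness and nonemptiness via asymptotic compactness, invariance through the cocycle identity split in both directions, attraction by contradiction using the absorbing-then-compactness factoring, and uniqueness by mutual attraction. There is nothing to compare against here, and your outline is correct as a reconstruction of the cited argument.
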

\subsection{Upper semicontinuity of random attractors}
In this subsection, we recall some results in \cite{Wang2009} about the upper semicontinuity of random attractors when random disturbance vanishes. Given $ \varepsilon \in ( 0 , 1 ] $ and let $ \Phie $ be a random dynamical system with respect to $ ( \Omega , \mathcal{F} , \bP , ( \theta_{ t } )_{t \in \bR} ) $ which has a random absorbing set $ K _{\varepsilon} = \{ K_{\varepsilon} ( \omega ) \} _{\omega \in \Omega} $ and a random attractor $ \cA_{\varepsilon} = \{ \cA_{\varepsilon} ( \omega ) \} _{\omega \in \Omega} $. Let $ (X, \Vert \cdot \Vert _{X}) $ be a Banach space and $ \Phi $ be a dynamical system defined on $ X $ with the global attractor $ \cA_{0} $, which means that $ \cA_{0} $ is compact and invariant and attracts every bounded subset of $ X $ uniformly.
\begin{definition}
	For $ 0 < \varepsilon \leq 1 $, the family of random attractors $ \{ \cA _{\varepsilon} ( \omega ) \} _{\omega \in \Omega} $ is said to be \textbf{upper semicontinuous} when $ \varepsilon \rightarrow 0^{+} $ if
	$$
		\lim_{\varepsilon \rightarrow 0^{+}} \dist( \cA _{\varepsilon} (\omega) , \cA_{0} ) = 0, \quad \mbox{ for all } \omega \in \Omega.
	$$
\end{definition}

The following proposition is given and proved in \cite{Wang2009}.

\begin{proposition}\label{upperc}
	Suppose that the following conditions hold for $ \bP $-a.e. $ \omega \in \Omega $:
	\begin{enumerate}
		\item [(i)] $$ \lim\limits_{ n \rightarrow \infty } \Phi^{\varepsilon_{n}} ( t , \omega ) x_{n} = \Phi (t) x $$ for all $ t \geq 0 $, provided $ \varepsilon_{n} \rightarrow 0 $ and $ x_{n} \rightarrow x $ in $ X $;
		\item [(ii)] $$ \limsup\limits_{ \varepsilon \rightarrow 0 } \Vert K_{\varepsilon} (\omega) \Vert_{X} \leq c $$
		for some deterministic positive constant $ c $
		where $ \Vert K_{\varepsilon} (\omega) \Vert_{X} = \sup_{x \in K_{\varepsilon} (\omega)} \Vert x \Vert_{X} $;
		\item [(iii)] $$ \bigcup_{0 < \varepsilon \leq 1} \cA_{\varepsilon} ( \omega ) \mbox{ is precompact in } X .$$
	\end{enumerate}
	Then the family of random attractors $ \{ \cA _{\varepsilon} ( \omega ) \} _{\omega \in \Omega} $ is upper semicontinuous as $ \varepsilon \rightarrow 0^{+} $.
\end{proposition}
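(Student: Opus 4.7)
The plan is a proof by contradiction. Suppose the conclusion fails for some $ \omega \in \Omega $ satisfying conditions (i)--(iii): there exist $ \delta > 0 $, a sequence $ \varepsilon_{n} \rightarrow 0^{+} $, and points $ x_{n} \in \cA_{\varepsilon_{n}} ( \omega ) $ with $ \dist ( x_{n} , \cA_{0} ) \geq \delta $ for every $ n $. Since $ x_{n} \in \bigcup_{0 < \varepsilon \leq 1} \cA_{\varepsilon} ( \omega ) $, condition (iii) lets us extract a subsequence (still denoted $ x_{n} $) with $ x_{n} \rightarrow x $ in $ X $. The whole task then reduces to showing $ x \in \cA_{0} $, which contradicts the lower bound $ \delta $ in the limit.

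To place $ x $ inside $ \cA_{0} $, I would fix an arbitrary $ t \geq 0 $ and exploit the invariance of each random attractor: $ \cA_{\varepsilon_{n}} ( \omega ) = \Phie^{\varepsilon_{n}} ( t , \theta_{-t} \omega ) \cA_{\varepsilon_{n}} ( \theta_{-t} \omega ) $, so we can write $ x_{n} = \Phi^{\varepsilon_{n}} ( t , \theta_{-t} \omega ) y_{n} $ for some $ y_{n} \in \cA_{\varepsilon_{n}} ( \theta_{-t} \omega ) $. Applying condition (iii) at the point $ \theta_{-t} \omega $ (which is legitimate since (iii) holds $ \bP $-a.e.\ and $ \theta_{-t} $ is measure-preserving), the sequence $ y_{n} $ has a subsequential limit $ y \in X $. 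Condition (i), applied at $ \theta_{-t} \omega $ with the sequence $ y_{n} \rightarrow y $ and $ \varepsilon_{n} \rightarrow 0 $, then yields
\begin{equation*}
	x = \lim_{n \rightarrow \infty} \Phi^{\varepsilon_{n}} ( t , \theta_{-t} \omega ) y_{n} = \Phi ( t ) y.
\end{equation*}

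The key use of condition (ii) is to show that $ y $ lies in a fixed bounded set of $ X $, independent of $ t $. Since $ \cA_{\varepsilon_{n}} $ is invariant, one checks (by pushing the invariance further back in time into the absorbing set) that $ \cA_{\varepsilon_{n}} ( \theta_{-t} \omega ) \subseteq K_{\varepsilon_{n}} ( \theta_{-t} \omega ) $; combined with (ii) applied at $ \theta_{-t} \omega $, this gives $ \Vert y_{n} \Vert_{X} \leq c + 1 $ for all $ n $ large, hence $ \Vert y \Vert_{X} \leq c + 1 $. Thus $ y $ belongs to the fixed bounded ball $ B = \{ z \in X : \Vert z \Vert_{X} \leq c + 1 \} $, and $ x = \Phi ( t ) y \in \Phi ( t ) B $ for every $ t \geq 0 $.

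Now the deterministic global attractor $ \cA_{0} $ attracts bounded subsets of $ X $ uniformly, so $ \dist ( \Phi ( t ) B , \cA_{0} ) \rightarrow 0 $ as $ t \rightarrow \infty $. Since $ t $ was arbitrary and $ x $ was constructed to lie in $ \Phi ( t ) B $ for every such $ t $, we conclude $ \dist ( x , \cA_{0} ) = 0 $, and compactness of $ \cA_{0} $ gives $ x \in \cA_{0} $. This contradicts $ \dist ( x_{n} , \cA_{0} ) \geq \delta $ upon passing to the limit $ x_{n} \rightarrow x $, completing the proof. The main technical nuisance---not a deep obstacle, but the point where one must be careful---is verifying that $ \cA_{\varepsilon_{n}} ( \theta_{-t} \omega ) $ is indeed contained in the absorbing set $ K_{\varepsilon_{n}} ( \theta_{-t} \omega ) $ so that bound (ii) can be invoked at the pulled-back fibre; every other step is essentially a routine combination of invariance, the cocycle convergence (i), and attraction of bounded sets by $ \cA_{0} $.
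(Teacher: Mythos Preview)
The paper does not prove this proposition; it simply states it and cites Wang \cite{Wang2009} for the proof. Your argument is the standard contradiction-plus-invariance proof that appears in that reference (and in the earlier Caraballo--Langa--Robinson paper \cite{CLR1998}): assume failure, extract a limit via precompactness (iii), pull back through invariance to land in a fixed bounded set via (ii), push forward via (i) to reach $\Phi(t)B$ for arbitrarily large $t$, and conclude by attraction. So your approach is essentially the same as the cited source.

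One small remark on the care you flagged: your use of (i) and (iii) at the fibre $\theta_{-t}\omega$ for \emph{every} $t\geq 0$ is slightly more than the hypotheses literally give, since the $\bP$-null exceptional set could in principle depend on $t$. The clean fix is to run the argument along a fixed sequence $t_k \to \infty$ (so only countably many fibres are needed), intersect the corresponding full-measure sets, and conclude $\dist(x,\cA_0)=0$ from $x \in \Phi(t_k)B$ for all $k$. This is cosmetic and does not affect the substance of your proof.
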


\subsection{Stochastic Navier-Stokes equations with memory}
In this subsection, we show that there is a continuous random dynamical system generated by the stochastic Navier-Stokes equations with memory in unbounded domains.

First, we set $ \delta_0 = \min \left\{\frac{\nu \lambda_1}{2}, \frac{\delta}{2}\right\} $ and take a constant $ \sigma $ large enough such that
\begin{equation}\label{sigma}
	\sigma > \max \left\{ \frac{c_0 \varepsilon^{2}}{ 2 \delta_0 }, \frac{c_5 \varepsilon^{4}}{ 2 \delta_0 } \right\}.
\end{equation}
where $ c_0 = \frac{2(\hat{c}\tilde{c})^2}{\nu} $ and $ c_5 = \frac{ 3456 ( \hat{c} \tilde{c})^4 }{\nu ^3} $, $ \tilde{c} $ is a constant which will be assigned later.

Next, we consider the probability space $ ( \Omega , \mathcal{F} , \bP ) $ where
$$
	\Omega = \left\{ \omega \in C ( \bR , \bR ) : \omega ( 0 ) = 0 \right\},
$$
$ \mathcal{F} $ is the Borel $ \sigma $-algebra induced by the compact-open topology of $ \Omega $ and $ \bP $ is the corresponding Wiener measure on $ ( \Omega , \mathcal{F} ) $. Then we identify $ W(t) $ with $ \omega(t) $, i.e., $ \omega(t) = W( t , \omega ) $, $ t \in \bR $. Define the time shift by
$$
	\theta_{ t } \omega( \cdot ) = \omega( \cdot + t ) - \omega(t), \quad \omega \in \Omega, \  t \in \bR.
$$
Then $ ( \Omega , \mathcal{F} , \bP , ( \theta_{ t } )_{t \in \bR} ) $ is an ergodic metric dynamical system (see e.g., \cite{Arnold1998}).

Moreover, by applying the Leray orthogonal projection $ \mathcal{P} $ to $\eqref{SNSM}_1$, we have
\begin{equation}\label{SNSM2}
	\pt_t u + \nu Au + \int_{0}^{\infty}g(s) A u (t-s)\d s + B(u,u) = f + \varepsilon h \frac{\d W}{\d t}
\end{equation}
subject to $ u(x, 0) = u_0(x) $ and $ u(x,-s) = \rho(x,s) , s > 0 $ with
\begin{equation}
	\label{initial}
	u_0(x) \in \bH, \quad
	\int_0^s \rho(x, \sigma) \d \sigma \in \cM.
\end{equation}
Here we rewrite $ \mathcal{P} f $ and $ \mathcal{P} h $ as $ f $ and $ h $ respectively and $ f(x) \in \bm{H} $, $ h(x) \in \bm{W} $. Then we introduce the past history variable
$$
	\eta ^t (s) = \int_{0}^{s} u(t-\tau) \d \tau,
$$
which satisfies the differential identity
$$
	\pt_t \eta ^t (s) = - \pt_s \eta^t (s) +  u(t).
$$
For readability, we will suppress $ t $ in the notation of $ \eta $ in the sequel.
Combining the definition of $T$ and integration by parts, one obtains
\begin{equation}\label{SNSM3}
\left\{
	\begin{aligned}
	& \pt_t u + \nu Au + \int_{0}^{\infty} \mu (s) A \eta(s) \d s + B(u,u) = f + \varepsilon h \frac{\d W}{\d t}, \\
	& \pt_t \eta = T \eta +u, \\
	& u(0) = u_0, \  \eta^0 := \eta_0.
	\end{aligned}
\right.
\end{equation}
\begin{remark}
	Though we transfer \eqref{SNSM} to system \eqref{SNSM3}, we can not claim that two systems are equivalent. This is because for the general initial data ($ \eta_0 $ does not depend on $ u_0 $) in the phase space of \eqref{SNSM3}, the solutions are not exactly what of original system \eqref{SNSM} while normally a delayed system has the past history data as $ u(x, t), t >0 $ in more general space. Hence, in our manuscript, we assume a more specific case of past history space by \eqref{initial} and $ \eta_0 = \int_0^s \rho(x, \sigma) \d \sigma $, which is coincide with the phase space of the transformed system, namely,
	\begin{equation}
		\label{initial2}
		(u_0, \eta_0) \in \cH.
	\end{equation}
\end{remark}

To derive a continuous random dynamical system related to Eq. \eqref{SNSM3}, we consider the Ornstein-Uhlenbeck equation \cite{Tang2014} and convert the stochastic equation to a deterministic equation with random parameters $ z $ which satisfies
\begin{equation}\label{OU}
	\d z + \sigma z \d t = \d W.
\end{equation}
It is easy to check that a solution to \eqref{OU} is given by
	$$
		z(t;\omega) = \left( \int_{\infty}^{t} e^{- \nu (t - \tau) }\d W(\tau)\right)(\omega).
	$$
From \cite[Proposition 4.3.3]{Arnold1998}, there exists a tempered function $r(\omega) > 0$ such that
\begin{equation}\label{s1}
	\beta _1 (\theta_t \omega) = \left( \vert z(\theta_t \omega) \vert ^2 + \vert z(\theta_t \omega) \vert ^4\right) \leq r( \theta_{t} \omega ),
\end{equation}
where $ r(\omega )$ satisfies that for $ \bP $-a.e. $ \omega \in \Omega $,
\begin{equation}\label{s2}
	r(\theta_t \omega) \leq e^{\frac{\delta_0}{2} |t|} r(\omega),\quad t\in \bR.
\end{equation}
Then it follows from \eqref{s1} and \eqref{s2} that, for $ \bP $-a.e. $ \omega \in \Omega $,
\begin{equation}\label{s3}
	\beta_1(\theta_t \omega) \leq e^{\frac{\delta_0}{2} |t|} r(\omega), \quad t\in \bR.
\end{equation}

Next, we set $ y(\theta_t \omega) = h(x)z(\theta_t \omega) $ and $ v(t) = u(t) - \varepsilon y(\theta_t \omega) $ where $ u $ is a solution of \eqref{SNSM2}. Since $ h(x) \in \bm{W} $, we denote $ \tilde{c} = \max \left\{ \norm{h(x)}, \norm{A^{\onehalf} h(x)}, \norm{A h(x)} \right\} $ with estimates
\begin{equation}\label{sv}
	\begin{aligned}
		\Vert y(\theta_t \omega) \Vert & \leq \Vert h(x)z(\theta_t \omega) \Vert \leq \tilde{c} \left|z(\theta_t \omega)\right|,\\
		\Vert A^{\frac{1}{2}}y(\theta_t \omega) \Vert  & \leq \Vert A^{\frac{1}{2}} h(x)z(\theta_t \omega) \Vert  \leq \tilde{c} \left|z(\theta_t \omega)\right|,\\
		\Vert A y(\theta_t \omega) \Vert  & \leq \Vert A  h(x)z(\theta_t \omega) \Vert \leq \tilde{c} \left|z(\theta_t \omega)\right|.
	\end{aligned}
\end{equation}
Therefore, we have
\begin{equation}\label{SNSM4}
\left\{
	\begin{aligned}
		&\pt_t v + \nu Av + \int_{0}^{\infty} \mu (s) A \eta (s)\d s + B(v + \varepsilon y(\theta_t \omega) , v + \varepsilon y(\theta_t \omega)) \\
		&\qquad\qquad\qquad\qquad\qquad\qquad = f + \varepsilon \left( \sigma y(\theta_t \omega) - \nu A y(\theta_t \omega)\right), \\
		&\pt_t \eta = T \eta + v + \varepsilon y(\theta_t \omega),\\
		&v(0) = v_0 := u_0 - \varepsilon y(\omega),\ \eta^0 = \eta_0,
	\end{aligned}
\right.
\end{equation}
which is equivalent to \eqref{SNSM2}, providing \eqref{initial2}.

In the following, we focus on system \eqref{SNSM4} combined with the point view of random dynamics in Section \ref{randomdy}. Thus, we denote
\begin{equation}\label{phi}
	\begin{gathered}
		\psi (t,\omega, \psi_0(\omega)) = \tran{v(t, \omega, v_0)}{\eta^t (\omega, \eta_0)} \  \textrm{corresponding to \eqref{SNSM4}}, \\
		\phi (t,\omega, \phi_0(\omega)) = \psi (t,\omega, \psi_0(\omega)) + \tran{\varepsilon y(\theta_t \omega)}{0} \ \textrm{corresponding to \eqref{SNSM3}},
	\end{gathered}
\end{equation}
with
\begin{gather*}
	\psi_{0} = \tran{u_0 - \varepsilon y(\omega)}{\eta_{0}}, \quad
	\phi_{0} = \tran{u_0}{\eta_{0}}, \\
	\Vert \psi \Vert^2_{\cH} = \Vert v \Vert^2 + \Vert \eta \Vert^2_{\mathcal{M}}.
\end{gather*}

\section{Global Well-posedness}\label{wellposd}
We first give the definition of weak solutions:
\begin{definition}[Weak solution]
	Let $ f \in \bm{H} $ and $ \psi_{0} = \tran{v_{0}}{\eta_{0}} \in \cH $, $ \psi = \tran{v}{\eta} $ is a weak solution of problem \eqref{SNSM4} provided that
	\begin{itemize}
		\item[(i)] $ v \in L^{\infty}(0,T; \bm{H}) \cap L^{2}(0,T; \bm{V}), \quad v_{t} \in L^{2}(0,T; \bm{V}'), \quad \eta \in L^{\infty}(0,T; \cM) $;
		\item[(ii)] for all $ \varphi = \tran{w}{\xi} \in \cH $, we have
		\begin{equation}\label{SNSM5}
			\left\{
			\begin{aligned}
				&
				\begin{aligned}
					(\pt_t v, w) + \nu (Av,w) + \int_{0}^{\infty} \mu (s) (A \eta (s), w)\d s + b(v + \varepsilon y(\theta_t \omega) , v + \varepsilon y(\theta_t \omega), w) & \\
					= ( f + \varepsilon \left( \sigma y(\theta_t \omega) - \nu A y(\theta_t \omega)\right) , w)&,
				\end{aligned} \\
				& \innerM{\pt_t \eta}{\xi}= \innerM{T \eta}{\xi} + \innerM{v + \varepsilon y(\theta_t \omega)}{\xi}.
			\end{aligned}
			\right.
		\end{equation}
	\end{itemize}
\end{definition}
Then, the global well-posedness of \eqref{SNSM4} is stated in the following theorem.
\begin{theorem}\label{wellp}
	Assume that $ T_{0} \in \bR^{+} $ and $ \psi_{0} \in \cH $. For $ \bP $-$ a.e. $ $ \omega \in \Omega $, there exists a unique solution $ \psi $ of \eqref{SNSM4} on the interval $ [T_{0}, \infty) $ satisfying
	$$
		v \in L^{\infty}(T_{0},\infty; \bm{H}) \cap L^{2}(T_{0},\infty; \bm{V}), \quad \eta \in L^{\infty}(T_{0},\infty; \cM)
	$$
	and $ v_{t} $ is uniformly bounded in $ L^{2}(T_{0},\infty; \bm{V}') $. Moreover, the solution continuously depends on
	the initial data.
\end{theorem}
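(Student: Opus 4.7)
The plan is to adapt the classical Faedo--Galerkin method to the history--formulation system \eqref{SNSM4}. Let $\{e_j\}_{j\ge 1}$ be the $\bm{H}$-orthonormal basis of eigenfunctions of the Stokes operator $A$, and let $P_n$ denote the orthogonal projection onto $\mathrm{span}\{e_1,\ldots,e_n\}$. I seek approximants $\psi_n=\tran{v_n}{\eta_n}$ with $v_n(t)\in P_n\bm{H}$ solving the finite-dimensional ODE obtained by projecting the first equation of \eqref{SNSM4} onto $P_n\bm{H}$, coupled to the transport problem $\partial_t\eta_n = T\eta_n + v_n + \varepsilon y(\theta_t\omega)$ with $\eta_n^0=\eta_0$, which can be solved explicitly along characteristics. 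Substituting this representation of $\eta_n$ back into the equation for $v_n$ yields an integro-differential system on $P_n\bm{H}$ whose local solvability on some interval $[T_0,T_n)$ follows from the standard theory of ODEs with delay.

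The key a priori bound comes from pairing the first equation of \eqref{SNSM4} with $v_n$ in $\bm{H}$ and the memory equation with $\eta_n$ in $\cM$ and adding. The two coupling contributions $\int_0^\infty\mu(s)\langle\eta_n(s),v_n\rangle_1\,\mathrm{d}s$ cancel exactly by symmetry, while the Dafermos condition \eqref{mu} and an integration by parts (whose boundary term vanishes because $\eta_n(0)=0$ and $\mu(s)\to 0$ at infinity) give $\langle T\eta_n,\eta_n\rangle_{\cM}\le -\tfrac{\delta}{2}\|\eta_n\|_{\cM}^2$. Expanding the trilinear contribution as $b(v_n+\varepsilon y,v_n+\varepsilon y,v_n)=\varepsilon b(v_n,y,v_n)+\varepsilon^2 b(y,y,v_n)$ (the remaining piece vanishes by skew-symmetry) and estimating through Lemma \ref{b}, the pointwise bounds \eqref{sv}, the temperedness estimates \eqref{s1}--\eqref{s3}, and Young's inequality with the threshold $\sigma$ from \eqref{sigma}, I obtain a differential inequality of the form
\[
\tfrac{\mathrm{d}}{\mathrm{d}t}\|\psi_n\|_{\cH}^2 + \nu\|v_n\|_1^2 + \delta\|\eta_n\|_{\cM}^2 \;\le\; c\bigl(\|f\|^2 + \varepsilon^2\beta_1(\theta_t\omega)\bigr) + c\varepsilon^4\beta_1(\theta_t\omega)\|v_n\|^2.
\]
Gronwall's lemma delivers global existence of $\psi_n$ together with uniform bounds for $v_n$ in $L^\infty(T_0,\infty;\bm{H})\cap L^2_{\mathrm{loc}}(T_0,\infty;\bm{V})$ and for $\eta_n$ in $L^\infty(T_0,\infty;\cM)$; a comparison in the first equation of \eqref{SNSM4} then bounds $\partial_t v_n$ in $L^2_{\mathrm{loc}}(T_0,\infty;\bm{V}')$.

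Weak and weak-$\ast$ extraction is routine, but the principal obstacle is identifying the limit of the convective term in the unbounded domain $\cO$, where the Aubin--Lions lemma is not available because $\bm{V}\hookrightarrow\bm{H}$ is not compact. I will circumvent this by localisation: on each bounded subdomain $\cO_R=\cO\cap\{|x|\le R\}$, the compact embedding $H^1(\cO_R)\hookrightarrow L^2(\cO_R)$ together with the $L^2_{\mathrm{loc}}$ bound on $\partial_t v_n$ yields, via Aubin--Lions applied on $\cO_R$, strong convergence $v_n\to v$ in $L^2(T_0,T;[L^2(\cO_R)]^2)$. This suffices to pass to the limit in $b(v_n,v_n,\varphi)$ for every compactly supported $\varphi\in\mathcal{V}$, and a density argument extends the identity to all $\varphi\in\bm{V}$. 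The memory equation is linear in $\eta_n$ and passes directly to the limit, producing a global weak solution with the stated regularity. For uniqueness and continuous dependence, the difference $\bar\psi=\psi^{(1)}-\psi^{(2)}$ of two solutions satisfies a linear system with the same cancellation structure, and the nonlinear residue reduces to $b(\bar v,v^{(1)}+\varepsilon y,\bar v)$ because $b(v^{(2)}+\varepsilon y,\bar v,\bar v)=0$. Invoking the two-dimensional estimate $|b(\bar v,w,\bar v)|\le c\|\bar v\|\,\|\bar v\|_1\|w\|_1$, Young's inequality, and the same memory cancellation as before, I obtain $\tfrac{\mathrm{d}}{\mathrm{d}t}\|\bar\psi\|_{\cH}^2 \le c\bigl(\|v^{(1)}\|_1^2+\varepsilon^2|z(\theta_t\omega)|^2\bigr)\|\bar\psi\|_{\cH}^2$, and Gronwall together with the already-established $L^2_{\mathrm{loc}}$ bound on $\|v^{(1)}\|_1$ delivers both uniqueness and Lipschitz continuous dependence on the initial datum in $\cH$.
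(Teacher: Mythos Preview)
Your proposal is correct and follows the same overall strategy as the paper: Faedo--Galerkin approximation, energy estimates exploiting the memory cancellation and the Dafermos condition, then Gronwall for uniqueness and continuous dependence. There are, however, two technical differences worth noting.

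First, the paper projects \emph{both} components onto finite-dimensional subspaces, choosing a separate orthonormal basis $\{\xi_j\}\subset C_0^\infty(\bR^+,\bV)$ for $\cM$ and writing $\eta_n=\sum_k b_k^n(t)\xi_k$, so that the approximate system is a genuine ODE in $\bR^{2n}$. You instead project only the velocity and solve the transport equation for $\eta_n$ exactly along characteristics with unprojected datum $\eta_0$; this yields an integro-differential ODE on $P_n\bH$. Both setups produce the same energy identity (the coupling terms $\int_0^\infty\mu(s)\langle\eta_n,v_n\rangle_1\,\mathrm{d}s$ cancel in either case), and both are standard in the memory literature.

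Second, and more substantively, you are more careful than the paper at the limit-passage step. The paper simply invokes the Aubin--Lions lemma to obtain $v_n\to v$ strongly in $L^2(T_3,\infty;\bH)$, which is imprecise on an unbounded domain where $\bV\hookrightarrow\bH$ is not compact (as the paper itself emphasises elsewhere). Your localisation to bounded balls $\cO_R$, applying Aubin--Lions there, and then passing to the limit in $b(v_n,v_n,\varphi)$ for compactly supported $\varphi\in\mathcal{V}$ before extending by density, is the correct way to close this gap.

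One minor slip: the term $\varepsilon\, b(v_n,y,v_n)$ produces a contribution of order $c\,\varepsilon^{2}\beta_1(\theta_t\omega)\|v_n\|^2$ on the right of your differential inequality, not $\varepsilon^4$; this matches the paper's coefficient $c_0\varepsilon^2\beta_1$ and does not affect the Gronwall argument.
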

\begin{proof}
	We divide the proof of Theorem \ref{wellp} into three parts: existence, uniqueness and dependence.
	
	\textbf{Part 1: Existence}. First, we use the standard Faedo-Galerkin procedure to show the existence of weak solution to \eqref{SNSM4}.\\
	\uuline{\textit{Step 1}. The approximate system.} Let $ \left\{ w_{j} \right\}_{j=1}^{\infty} $ be the normalized eigenfunction basis of the Stokes operator $ A $ and $ \left\{ \xi_{j} \right\}_{j=1}^{\infty} $ be the orthonormal basis of $ \cM $ with all $ \xi_{j} \in C^{\infty}_{0}(\bR^{+}, \bV) $ where $ C^{\infty}_{0}(\bR^{+}, \bV) $ is a compactly supported infinitely differentiable function space respect to $ s $. For any integer $ n $, we denote by $ P_{n} $ and $ Q_{n} $ the projections onto the subspaces
	$$
		H_{n} := P_{n} H = \mbox{span}
		\left\{ w_{1}, w_{2}, \dots, w_{n} \right\}
		\subset \bH
	$$
	and
	$$
		\cM_{n} := Q_{n} \cM = \mbox{span}
		\left\{ \xi_{1}, \xi_{2}, \dots, \xi_{n} \right\}
		\subset \cM,
	$$
	respectively. Then we define the approximate solutions $ \psi_{n} := \tran{\vn}{\etan} $ as
	$$
		\vn(t) = \sum_{k=1}^{n} a^{n}_{k}(t) w_{k},
		\quad
		\etan(s) = \sum_{k=1}^{n} b^{n}_{k}(t) \xi_{k}(s),
	$$
	with respect to unknown smooth variables $ \tran{a_k^n}{b_k^n}(t) $. Hence we have
	$$
		A \vn(t) = \sum_{k=1}^{n} \lambda_{k} a^{n}_{k}(t) w_{k},
		\quad
		A \etan(s) = \sum_{k=1}^{n} \lambda_{k} b^{n}_{k}(t) \xi_{k}(s).
	$$
	Therefore, the $ n $th-order Galerkin approximate system is
	\begin{equation}\label{APPS}
		\left\{
		\begin{aligned}
			&
			\begin{aligned}
			(\pt_t\vn, w) + \nu (A\vn,w) + \int_{0}^{\infty} \mu (s) (A \etan (s), w)\d s  & \\
			+ b(\vn + \varepsilon y , \vn + \varepsilon y, w)
			= ( \tilde{f} , w), &
			\end{aligned} & \mbox{ for all } w \in H_{n} ,\\
			& \innerM{\pt_t \etan}{\xi}= \innerM{T \etan}{\xi} + \innerM{\vn + \varepsilon y}{\xi}, & \mbox{ for all } \xi \in \cM_{n},\\
			& \vn(0) = P_{n}v_{0},  & a.e. \mbox{ in } \cO, \\
			& \etan(0) = Q_{n}\eta^{0}, & a.e. \mbox{ in } \cO \times \bR^{+}.
		\end{aligned}
		\right.
	\end{equation}
	where we denote $ \tilde{f} $ by $ \tilde{f} = P_{n} \left[ f + \varepsilon \left( \sigma y - \nu A y \right) \right] $.

	Let $ \varphi := \tran{w}{\xi} $. Taking $\varphi = \tran{w_{k}}{\xi_{k}}\ k = 1, 2, \dots, n $, we deduce that
	\begin{equation}\label{ODE}
		\left\{
			\begin{aligned}
				& \frac{\d}{\d t} X = G(t,X(t)) \\
				& X(0) = \tran{\inner{v_{0}}{a_{k}^{n}}}{\innerM{\eta^{0}}{b_{k}^{n}}}
			\end{aligned}
		\right.
	\end{equation}
	where $ X(t) = \tran{a_k^n}{b_k^n} $, $ G = G_k $, $ k = 1, 2, \dots, n $ and
	\begin{equation*}
		G_k =
		\left(
			\begin{gathered}
				- \nu \lambda_{k} a^{n}_{k}
				- \sum_{j=1}^{n} b^{n}_{j} \innerM{\xi_{j}}{w_{k}}
				- b\bigg(\sum_{i=1}^{n} a^{n}_{i} w_{i} + \varepsilon y , \sum_{j=1}^{n} a^{n}_{j} w_{j} + \varepsilon y, w_{k}\bigg)
				+ ( \tilde{f} , w_{k}) \\
				\sum_{j=1}^{n} a^{n}_{j} \innerM{w_{j}}{\xi_{k}}
				- \sum_{j=1}^{n} b^{n}_{j} \innerM{\xi'_{j}}{\xi_{k}}
				+ \innerM{\varepsilon y}{\xi_{k}},
			\end{gathered}
		\right).
	\end{equation*}
	Since $ w_k \in H_n \subset \bH$, $ \xi_k \in \cM_n \subset \cM $, we notice that the coefficients in $ G_k $ satisfy
	\begin{align*}
		& \innerM{\xi_{j}}{w_{k}}
		= \alpha_{j,k}, \\
		& b\bigg(\sum_{i=1}^{n} a^{n}_{i} w_{i}, \sum_{j=1}^{n} a^{n}_{j} w_{j}, w_{k}\bigg)
		= \sum_{i=1}^{n} \sum_{j=1}^{n} \beta_{ij,k} a^{n}_{i} a^{n}_{j}, \\
		& b\bigg(\sum_{i=1}^{n} a^{n}_{i} w_{i}, \varepsilon y, w_{k}\bigg)
		= \sum_{i=1}^{n} \gamma_{i,k} a^{n}_{i}, \quad b\bigg(\varepsilon y , \sum_{j=1}^{n} a^{n}_{j} w_{j}, w_{k}\bigg)
		= \sum_{j=1}^{n} \zeta_{j,k} a^{n}_{j}, \\
		& b(\varepsilon y, \varepsilon y, w_k) + ( \tilde{f} , w_{k})
		= C_k, \quad \innerM{\xi'_{j}}{\xi_{k}} = \iota_{j,k}, \quad \innerM{\varepsilon y}{\xi_{k}} = D_k,
	\end{align*}
	where $ \alpha_{j,k} $, $ \beta_{ij,k} $, $ \gamma_{i,k} $, $ C_k $, $ \iota_{j,k} $, $ D_k $ $ \in \bR $ are finite constants. Then the continuity of $ a_k^n $ and $ b_k^n $ implies that $ G(t,X(t)) $ is continuous about $ t $. Moreover, for every $ R > 0 $, there is a constant $ L > 0 $ depending only on $ R $, such that for $ t \in (0,T) $ and $ \abs{X}, \abs{Y} \leq R  $, we have
	\begin{equation*}
		\abs{G(t,X) - G(t,Y)} \leq L \abs{X - Y},
	\end{equation*}
	which means the local Lipschitz condition for $ G(t, \cdot) $. Thanks to the local existence theory of ordinary differential equations, we know that there exists a solution to the problem \eqref{ODE} which means that solutions $ \tran{\vn}{\etan} $ to the approximate problem \eqref{APPS} exist.\\
	\uuline{\textit{Step 2}. Uniform \textit{a priori} estimates.}
	Let us take $ \varphi = \tran{\vn}{\etan} $ in problem \eqref{APPS}, then we have
	\begin{equation}\label{Energy1}
		\begin{aligned}
			& \frac{1}{2} \frac{\d}{\d t} \left( \norm{ \vn }^{2} + \normM{ \etan }^{2} \right) + \nu \norm{ A^{\frac{1}{2}} \vn }^{2} \\
			& \quad = \frac{1}{2} \int_{0}^{\infty} \mu'(s) \norm{ A^{\frac{1}{2}} \etan }^{2} \d s + \varepsilon \innerM{\etan}{y(\theta_t \omega)} \\
			& \qquad -  b(\vn + \varepsilon y(\theta_t \omega) , \vn + \varepsilon y(\theta_t \omega), \vn)\\
			& \qquad + (\tilde{f} , \vn) \\
			& \quad =: I_1 + I_2 + I_3 + I_4.
		\end{aligned}
	\end{equation}
	It is clear from the Poincar\'{e} inequality that
	\begin{equation}\label{e0}
	 	\nu \norm{ A^{\frac{1}{2}} \vn }^{2} \geq \frac{\nu}{2} \norm{ \vn }^{2}_1
	 	+ \frac{\nu \lambda_1}{2} \norm{ \vn }^{2}.
	\end{equation}
	
	Now we are ready to estimate $ I_i, i=1,2,3,4 $ respectively.
	For $I_1$, from the Dafermos condition \eqref{mu}, we get
	\begin{equation}\label{e1}
	 	I_1 \leq -\frac{\delta}{2} \int_{0}^{\infty} \mu (s) \norm{ A^{\frac{1}{2}} \etan }^{2} \d s = -\frac{\delta}{2} \normM{ \etan }^{2}.
	\end{equation}
	By utilizing the H\"{o}lder's inequality, the Young's inequality with $ \epsilon $ and \eqref{sv}, one obtains that
	\begin{equation}\label{e2}
	 	\begin{aligned}
			I_2
			& = \varepsilon \int_{0}^{\infty} \mu (s) 		(A^{\frac{1}{2}}\etan,A^{\frac{1}{2}}y(\theta_t\omega))\d s\\
			&\leq \frac{\delta}{4}\int_{0}^{\infty} \mu (s) \norm{ A^{\frac{1}{2}} \etan }^{2}\d s
			+ \frac{\varepsilon^2}{\delta} \int_{0}^{\infty} \mu (s) \Vert A^{\frac{1}{2}} y(\theta_t\omega) \Vert ^2\d s\\
			&\leq \frac{\delta}{4} \normM{ \etan }^{2}
			+ \frac{c \varepsilon^2 \kappa}{\delta} \beta_{1}(\theta_{ t } \omega).
		\end{aligned}
	\end{equation}
	Since $b(u,v,v) = 0$, it follows from the H\"{o}lder's inequality, the Young's inequality with $ \epsilon $, Lemma \ref{b} and \eqref{sv} that
	\begin{equation}\label{e3}
		\begin{aligned}
			\vert I_3 \vert
			& \leq \vert b(\vn , \varepsilon y(\theta_t \omega) , \vn) \vert
			+ \vert b(\varepsilon y(\theta_t \omega) , \varepsilon y(\theta_t \omega) , \vn) \vert \\
			& \leq \hat{c} \varepsilon \norm{\vn} \norm{A^{\onehalf} y(\theta_{ t } \omega)} \norm{A^\onehalf \vn} \\
			& \quad + \hat{c} \varepsilon^{2} \norm{y(\theta_t \omega)}^\onehalf \norm{A^\onehalf y(\theta_t \omega)} \norm{A y(\theta_t \omega)}^\onehalf \norm{\vn}\\
			& \leq \frac{\hat{c}^2 \tilde{c}^2 \varepsilon^2}{\nu} \beta_1(\theta_{ t }\omega) \norm{\vn}^{2}
			+ \frac{\nu}{4} \norm{A^\onehalf \vn}^2 + \frac{\hat{c}^2 \tilde{c}^4 \varepsilon^4}{\nu \lambda_1} \beta_1(\theta_{ t } \omega) + \frac{\nu \lambda_{1}}{8} \norm{\vn}^2.
		\end{aligned}
	\end{equation}
	As $ f \in \bm{H} $, it can be deduced from the Young's inequality with $ \epsilon $ and \eqref{sv} that
	\begin{equation}\label{e4}
		\begin{aligned}
			I_4
			& \leq \frac{\nu \lambda_1}{8} \Vert \vn \Vert ^2 + \frac{2}{\nu \lambda_1} \left(\Vert f \Vert ^2 + 2c \varepsilon^{2} \left(\Vert y(\theta_t \omega) \Vert ^2 + \Vert A y(\theta_t \omega) \Vert ^2\right) \right) \\
			& \leq \frac{\nu \lambda_1}{8} \Vert \vn \Vert ^2 + \frac{c}{\nu \lambda_1} \left( 1 + \varepsilon^{2} \beta_1(\theta_{ t } \omega) \right).
		\end{aligned}
	\end{equation}
	Combining \eqref{Energy1}--\eqref{e4}, we have
	\begin{equation}\label{E1}
		\begin{aligned}
			& \quad \frac{\d}{\d t} \Vert \psin (t,\omega,\psi_0(\omega))\Vert^2_{\cH} + \frac{\nu}{2} \norm{ \vn (t,\omega,v_0(\omega)) }^{2} _1 \\
			& \leq \left( - \delta_0 + c_0 \varepsilon^{2} \beta_{1}(\theta_{ t } \omega)\right) \Vert \psin (t,\omega,\psi_0(\omega))\Vert^2_{\cH}
			 + c \left(1 + \varepsilon^{2} \beta _1 (\theta_t \omega)\right).
		\end{aligned}
	\end{equation}
	Applying the Gronwall's inequality, one obtains
	\begin{equation}\label{eH1}
	 	\begin{aligned}
	 		& \Vert \psin (t,\omega,\psi_0(\omega))\Vert^2_{\cH} \\
	 		& \quad	+ \frac{\nu}{2} \int_{0}^{t} e ^{ \delta_0 (s-t) - c_0 \varepsilon^{2} \int_{t}^{s} \beta_{1}(\theta_{ \tau } \omega) \d \tau } \Vert \vn(s,\omega,v_0(\omega)) \Vert^2 _1 \d s \\
			& \leq e ^{-\delta_0 t + c_0 \varepsilon^{2} \int_{0}^{t} \beta_{1}(\theta_{ \tau } \omega) \d \tau } \Vert \psi_0 (\omega)\Vert^2_{\cH}\\
			& \quad	+ c \int_{0}^{t} e ^{ \delta_0 (s-t) - c_0 \varepsilon^{2} \int_{t}^{s} \beta_{1}(\theta_{ \tau } \omega) \d \tau } \left(1 + \varepsilon^{2} \beta _1 (\theta_s \omega)\right)\d s.
		\end{aligned}
	\end{equation}
	In \eqref{eH1}, we replace $\omega$ by $\theta_{ - t} \omega$, then
	\begin{equation}\label{eH2}
		\begin{aligned}
			& \Vert \psin (t,\theta_{ - t} \omega,\psi_0(\theta_{ - t} \omega))\Vert^2_{\cH} \\
	 		& \quad + \frac{\nu}{2} \int_{0}^{t} e ^{ \delta_0 (s-t) - c_0 \varepsilon^{2} \int_{t}^{s} \beta_{1}(\theta_{ \tau - t } \omega) \d \tau } \Vert \vn( s , \theta_{ - t } \omega , v_{0}(\theta_{ - t } \omega) ) \Vert^2 _1 \d s \\
			& \quad \leq e ^{-\delta_0 t + c_0 \varepsilon^{2} \int_{0}^{t} \beta_{1}(\theta_{ \tau - t } \omega) \d \tau } \Vert \psi_0 (\theta_{ - t} \omega) \Vert^2_{\cH} \\
			& \qquad + c \int_{0}^{t} e ^{ \delta_0 (s-t) - c_0 \varepsilon^{2} \int_{t}^{s} \beta_{1}(\theta_{ \tau - t } \omega) \d \tau } \left(1 + \varepsilon^{2} \beta _1 (\theta_{s-t} \omega)\right)\d s\\
			& \quad = e ^{-\delta_0 t + c_0 \varepsilon^{2} \int_{-t}^{0} \beta_{1}(\theta_{ \tau } \omega) \d \tau } 	\Vert \psi_0 (\theta_{ - t} \omega) \Vert^2_{\cH} \\
			& \qquad + c \int_{-t}^{0} e ^{ \delta_0 s + c_0 \varepsilon^{2} \int_{s}^{0} \beta_{1}(\theta_{ \tau } \omega) \d \tau } \left(1 + \varepsilon^{2} \beta _1 (\theta_s \omega)\right)\d s.
		\end{aligned}
	\end{equation}
	
	Since $\beta_{1}(\theta_{ \tau } \omega)$ is stationary and ergodic, it follows from the ergodic theorem in \cite{CDF1997} that
	$$
	 	\lim\limits_{t \rightarrow + \infty} \frac{1}{t} \int_{-t}^{0} \beta_{1}(\theta_{ \tau } \omega) \d \tau = \mathbb{E} \left( \beta_{1}(\omega) \right) \leq \frac{1}{4 \sigma}.
	$$
	Hence, there exists a $T_1 > 0$ such that for all $t \geq T_1$,
	\begin{equation}\label{zz}
		\int_{-t}^{0} \beta_{1}(\theta_{ \tau } \omega) \d \tau \leq \frac{1}{4 \sigma} t \leq \frac{\delta_0}{2 c_0 \varepsilon^{2}} t.
	\end{equation}
	Thus for all $t \geq T_1$,
	\begin{equation}\label{eH3}
	 	\begin{aligned}
	 		& \quad \Vert \psin (t,\theta_{ - t} \omega,\psi_0(\theta_{ - t} \omega))\Vert^2_{\cH}
	 		+ \frac{\nu}{2} \int_{0}^{t} \Vert \vn( s , \theta_{ - t } \omega , v_{0}(\theta_{ - t } \omega) ) \Vert^2 _1 \d s \\
	 		& \leq e ^{-\frac{\delta_0}{2} t } \Vert \psi_0 (\theta_{ - t} \omega)\Vert^2_{\cH}
	 		+ c \int_{-t}^{0} e ^{ \delta_0 s + c_0 \varepsilon^{2} \int_{s}^{0} \beta_{1}(\theta_{ \tau } \omega) \d \tau } \left(1 + \varepsilon^{2} \beta _1 (\theta_s \omega) \right) \d s.
		\end{aligned}
	\end{equation}
	Note that $\psi_0 (\theta_{ - t} \omega) \in B(\theta_{ - t} \omega) \subset \cD$, we see that there is a $T_2 = T_2(B, \omega) > 0$, independent of $\varepsilon$, such that for all $t \geq T_2$,
	$$
		e ^{-\frac{\delta_0}{2} t } \Vert \psi_0 (\theta_{ - t} \omega)\Vert^2_{\cH} \leq 1.
	$$
	Since $ -t < s < \tau < 0 $, it follows from \eqref{zz} that for all $ t \geq T_{1} $,
	\begin{equation}\label{st}
	 	\int_{s}^{0} \beta_{1}(\theta_{ \tau } \omega) \d \tau
	 	\leq \frac{\delta_0}{2 c_0 \varepsilon^{2}} t.
	\end{equation}
	
	We now denote $ r _{1} ^{ \varepsilon } ( \omega ) $ by
	\begin{equation*}
	 	r _{1} ^{ \varepsilon } ( \omega )  = \int_{-\infty}^{0} e ^{ \delta_0 s + c_0 \varepsilon^{2} \int_{s}^{0} \beta_{1}(\theta_{ \tau } \omega) \d \tau } \left(1 + \varepsilon^{2} \beta _1 (\theta_s \omega) \right) \d s.
	\end{equation*}
	It follows from \eqref{s3} and \eqref{st} that
	\begin{align*}
		r _{1} ^{ \varepsilon } ( \theta_{ - t } \omega )
		& = \int_{-\infty}^{0} e ^{ \delta_0 s + c_0 \varepsilon^{2} \int_{s}^{0} \beta_{1}(\theta_{ \tau - t } \omega) \d \tau } \left(1 + \varepsilon^{2} \beta _1 (\theta_{s - t} \omega) \right) \d s \\
		& = \int_{-\infty}^{0} e ^{ \delta_0 s + c_0 \varepsilon^{2} \int_{s - t}^{- t} \beta_{1}(\theta_{ \tau } \omega) \d \tau } \left(1 + \varepsilon^{2} \beta _1 (\theta_{s - t} \omega) \right) \d s \\
		& \leq \int_{-\infty}^{0} e ^{ \frac{\delta_0}{2} s + c_0 \varepsilon^{2} \int_{s - t}^{- t} \beta_{1}(\theta_{ \tau } \omega) \d \tau } \left(1 + \varepsilon^{2} \beta _1 (\theta_{s - t} \omega) \right) \d s \\
		& = \int_{-\infty}^{-t} e ^{ \frac{\delta_0}{2} (s + t) + c_0 \varepsilon^{2} \int_{s}^{- t} \beta_{1}(\theta_{ \tau } \omega) \d \tau } \left(1 + \varepsilon^{2} \beta _1 (\theta_{s} \omega) \right) \d s \\
		& \leq \int_{-\infty}^{-t} e ^{ \frac{\delta_0}{2} (s + t) + c_0 \varepsilon^{2} \int_{s}^{0} \beta_{1}(\theta_{ \tau } \omega) \d \tau } \left(1 + \varepsilon^{2} \beta _1 (\theta_{s} \omega) \right) \d s \\
		& \leq e ^{ \delta_0 t } \int_{-\infty}^{-t} e ^{ \frac{\delta_0}{2} s } \left(1 + \varepsilon^{2} e^{- \frac{\delta_0}{2} s} r( \omega ) \right) \d s \\
		& = \frac{2}{\delta_0} e^{ \frac{\delta_0}{2} t }
		+ \varepsilon^{2} r ( \omega ) te ^{ \delta_0 t }.
	\end{align*}
	Then we get
	\begin{equation*}
		e ^{ - \frac{3 \delta_0}{2} t } r _{1} ^{ \varepsilon } ( \theta_{ - t } \omega )
		= \frac{2}{\delta_0} e^{ - \delta_0 t }
		+ \varepsilon^{2} r ( \omega ) t e ^{ - \frac{\delta_0}{2} t }
		\rightarrow 0, \mbox{ as } t \rightarrow \infty,
	\end{equation*}
	which means that $ r _{1} ^{ \varepsilon } ( \omega )  $ is a tempered function.
	Then for all $t \geq T_3 (B, \omega) = \max \{T_1, T_2\}$,
	\begin{equation}\label{conclusion1}
	 	\begin{aligned}
	 		& \quad \Vert \psin (t,\theta_{ - t} \omega,\psi_0(\theta_{ - t} \omega))\Vert^2_{\cH}
	 		+ \frac{\nu}{2} \int_{0}^{t} \Vert \vn( s , \theta_{ - t } \omega , v_{0}(\theta_{ - t } \omega) ) \Vert^2 _1 \d s
	 		\leq c_1 (r _{1} ^{ \varepsilon } ( \omega )  + 1).
	 	\end{aligned}
	\end{equation}
	where $ c_1 $ is a constant independent of $ \varepsilon $.
	Therefore, we deduce that for $ T > T_{3} $,
	\begin{align}\label{unifb1}
		\vn & \mbox{ is uniformly bounded in } L^{\infty} (T_{3},\infty; \bm{H}) \cap L^{2} (T_{3},\infty; \bm{V}),\\\label{unifb2}
		\etan & \mbox{ is uniformly bounded in } L^{\infty} (T_{3},\infty; \cM).
	\end{align}	
	
	Next, we estimate $ \pt_t \vn $. Taking any $ w \in \bV $ in equation \eqref{APPS}$ _{1} $, we have
	\begin{equation*}
		(\pt_t\vn, w) =
		- \nu (A\vn,w)
		- \int_{0}^{\infty} \mu (s) (A \etan (s), w)\d s
		- b(\vn + \varepsilon y , \vn + \varepsilon y, w)
		+ ( \tilde{f} , w),
	\end{equation*}
	Since
	\begin{gather*}
		\abs{(A\vn,w)} \leq \norm{\nabla \vn} \norm{w}_{\bV}; \\
		\abs{\int_{0}^{\infty} \mu (s) (A \etan (s), w)\d s}
		\leq \kappa \normM{\etan} \norm{w}_{\bV}; \\
		\begin{aligned}
			\abs{b(\vn + \varepsilon y , \vn + \varepsilon y, w)}
			& \leq c \beta_1^{\onehalf}(\theta_{ t } \omega) \left( \varepsilon^{2} + \varepsilon \norm{\nabla \vn} \right) \norm{w}_{\bV} \\
			& \quad + \norm{ \vn } \norm{\nabla \vn} \norm{w}_{\bV};
		\end{aligned} \\
		( \tilde{f} , w) \leq c ( 1 + \varepsilon \beta_1^{\onehalf} (\theta_{ t }\omega) ) \norm{w}_{\bV},
	\end{gather*}
	we obtain
	$$
		\begin{aligned}
			\norm{\pt_t \vn}_{\bV'}
			& \leq \norm{\nabla \vn} + \kappa \normM{\etan}
			+ c \beta_1^{\onehalf} (\theta_{ t } \omega) \left( \varepsilon^{2} + \varepsilon \norm{\nabla \vn} \right) \\
			& \quad + \norm{ \vn } \norm{\nabla \vn}
			+ c ( 1 + \beta_1^{\onehalf} (\theta_{ t }\omega) ) .
		\end{aligned}
	$$
	It follows from \eqref{unifb1} that for $ T > T_{3} $,
	\begin{equation*}
		\begin{aligned}
			\int_{T_{3}}^{T} \norm{\pt_t \vn}_{\bV'} ^{2} \d s
			& \leq c \int_{T_{3}}^{T} \norm{\nabla \vn} ^{2} \d s
				+ c \int_{T_{3}}^{T} \normM{\etan} ^{2} \d s \\
				& \quad + c \int_{T_{3}}^{T} \beta_1 (\theta_{ s } \omega) \left( \varepsilon^{4} + \varepsilon^{2} \norm{\nabla \vn}^{2} \right) \d s \\
				& \quad + c \int_{T_{3}}^{T} \norm{ \vn }^{2} \norm{\nabla \vn}^{2} \d s
				+ c \int_{T_{3}}^{T} ( 1 + \varepsilon^{2} \beta_1 (\theta_{ s }\omega) ) \d s\\
			& \leq c_{T} (r _{1} ^{ \varepsilon } ( \omega )  + 1).
		\end{aligned}
	\end{equation*}
	where $ c_{T} $ is a finite constant depending on $ T $.
	Then,
	\begin{equation}\label{unifb3}
		\pt_{t} \vn \mbox{ is uniformly bounded in } L^{2} (T_{3},\infty; \bV').
	\end{equation}
	\\
	\uuline{\textit{Step 3}. Passing to the limits.} It follows from \eqref{unifb1}, \eqref{unifb2} and \eqref{unifb3} that
	\begin{align}
		\label{Linfity}
		\vn \rightharpoonup v & \mbox{ weakly* in } L^{\infty} (T_{3},\infty; \bm{H}); \\
		\vn \rightharpoonup v & \mbox{ weakly in } L^{2} (T_{3},\infty; \bm{V}); \\
		\etan \rightharpoonup \eta^{t} & \mbox{ weakly* in } L^{\infty} (T_{3},\infty; \cM); \\
		\pt_{t} \vn \rightharpoonup \pt_{t} v & \mbox{ weakly in } L^{2} (T_{3},\infty; \bV').
	\end{align}
%
	Following the procedure in Brze\'{z}niak and Li \cite{BL2006}, who constructed a bounded ball and applied the standard diagonal procedure to discuss the compactness in unbounded domains, one gets
	\begin{equation}
		\label{L2}
		\vn \rightarrow v \mbox{ strongly in } L^{2} (T_{3},\infty; \bH),
	\end{equation}
	with $ v \in C(T_3, \infty; \bH) $. Then let us integrate \eqref{APPS} with respect to time over $ (T_3,T) $, $ T > T_3 $, for each term of which, we pass to the limit, i.e., for all $ w \in \bV $ and $ \xi \in \cM^1 $,
	\begin{align*}
		& \int_{T_3}^T (\pt_t \vn, w) \rightarrow \int_{T_3}^T (\pt_t v, w),
		\\
		& \int_{T_3}^T \nu (A\vn -A v,w)
		= \int_{T_3}^T \nu (A^\onehalf \vn - A^\onehalf  v, A^\onehalf w)
		\rightarrow 0, \\
		& \int_{T_3}^T \int_{0}^{\infty} \mu (s) (A \etan (s) - A \eta(s), w) \\
		& \quad = \int_{T_3}^T \int_{0}^{\infty} \mu (s) (A^\onehalf \etan (s) - A^\onehalf \eta(s), A^\onehalf w)
		\rightarrow 0, \\
		& \int_{T_3}^T b(\vn - v, \varepsilon y, w)
		+ b(\varepsilon y , \vn - v, w) \\
		& \quad \leq C \int_{T_3}^T \norm{\vn - v} \norm{\nabla (\varepsilon yw)} \\
		& \quad \leq C \left(\int_{T_3}^T \norm{\vn - v}^2\right)^\onehalf
		\left(\int_{T_3}^T \norm{\nabla (\varepsilon yw)} ^2\right)^\onehalf
		\rightarrow 0,
	\end{align*}
	and
	\begin{align*}
		&\int_{T_3}^T b(\vn , \vn , w) - b(v , v , w) \\
		& \quad = \int_{T_3}^T b(\vn , \vn - v, w) + b(\vn - v , v , w) \\
		& \quad = \int_{T_3}^T - b(\vn , w, \vn - v) + b(\vn - v , v , w) \\
		& \quad \leq C \int_{T_3}^T \norm{\vn - v} \left(\norm{\vn \nabla w}  + \norm{\nabla v w} \right) \\
		& \quad \leq C \left(\int_{T_3}^T \norm{\vn - v}^2\right)^\onehalf \left(\int_{T_3}^T \left(\norm{\vn \nabla w}  + \norm{\nabla v w} \right)^2 \right)^\onehalf
		\rightarrow 0,
	\end{align*}
	where we used \eqref{Linfity}--\eqref{L2}. Additionally, thanks to the regularity of limit functions $ v $ and $ \eta $, we have
	\begin{equation*}
		\eta^t(s) =
		\left\{
			\begin{aligned}
				& \int_{t-s}^t \left( v(\tau) + \varepsilon y(\theta_{ \tau }\omega )\right) \d \tau, \quad & 0 < s \leq t - T_3, \\
				& \eta_{0}(s - t + T_3) + \int_{T_{3}}^t \left( v(\tau) + \varepsilon y(\theta_{ \tau }\omega ) \right) \d \tau, \quad & s > t - T_3.
			\end{aligned}
		\right.
	\end{equation*}
	Therefore, $ (\vn, \etan) $ converges to the solution of \eqref{SNSM4}. Readers can find similar arguments in \cite[Section 5.1]{BL2006} and \cite[Section 4.4]{GGP1999}.
	
	\textbf{Part 2: Uniqueness}. Suppose that $ \psi_{1} = \tran{v_{1}}{\eta_{1}} $ and $ \psi_{2} = \tran{v_{2}}{\eta_{2}} $ are two solutions to \eqref{SNSM4} with the same initial data. Also we define $ \hpsi := \tran{w}{\xi} = \psi_{1} - \psi_{2} $. Then we have the following system
	\begin{equation}\label{UNIQ}
		\left\{
		\begin{aligned}
			&\pt_t w + \nu Aw + \int_{0}^{\infty} \mu (s) A \xi (s)\d s + B(w , v_{1} + \varepsilon y) + B(v_{2} + \varepsilon y , w) = 0, \\
			&\pt_t \xi = T \xi + w,\\
			&w(0) = 0,\ \xi^0 = 0.
		\end{aligned}
		\right.
	\end{equation}
	
	Taking $ \cH $ inner product with \eqref{UNIQ} by $ \hpsi $, we obtain
	\begin{equation}\label{UNIQ2}
		\frac{1}{2} \frac{\d}{\d t} \left( \norm{w}^{2} + \normM{\xi}^2 \right) + \nu \norm{A^{\onehalf} w}^{2}
		= \onehalf \int_{0}^{\infty} \mu'(s) \norm{A^{\onehalf} \xi(s)}^{2} \d s - b(w,v_{1} + \varepsilon y, w).
	\end{equation}
	As the similar procedure above, we get
	\begin{gather*}
		\nu \norm{A^{\onehalf} w}^{2} \geq \frac{\nu}{2} \norm{A^{\onehalf} w}^{2} + \frac{\nu \lambda_1}{2} \norm{w}^{2}; \\
		\onehalf \int_{0}^{\infty} \mu'(s) \norm{A^{\onehalf} \xi(s)}^{2} \d s \leq \frac{\delta}{2} \int_{0}^{\infty} \mu(s) \norm{A^{\onehalf} \xi(s)}^{2} \d s; \\
		\abs{b(w,\varepsilon y ,w)} \leq \frac{c_{0} \varepsilon^{2}}{\nu} \beta_1(\theta_{ t } \omega) \norm{w}^{2} + \frac{\nu}{8} \norm{A^{\onehalf} w}^{2}; \\
		\abs{b(w, v_{1} ,w)} \leq \frac{2 \hat{c}^{2}}{\nu} \norm{\nabla v_{1}}^{2}  \norm{w}^{2} + \frac{\nu}{8} \norm{A^{\onehalf} w}^{2}.
	\end{gather*}
	Hence
	\begin{equation}\label{UNIQ3}
		\frac{\d}{\d t} \norm{\hat{\psi}}^{2}_{\cH}  + \frac{\nu}{2} \norm{A^{\onehalf} w}^{2}
		\leq \left(-2\delta_{0} + \frac{2 c_{0} \varepsilon^{2}}{\nu} \beta_{1}(\theta_{ t } \omega) + \frac{4 \hat{c}^{2}}{\nu} \norm{\nabla v_{1}}^{2} \right) \norm{\hat{\psi}}^{2}_{\cH} .
	\end{equation}
	It can be deduced from \eqref{unifb1} that
	\begin{equation}
		\norm{\hat{\psi}( t, \theta_{ - t } \omega, \hat{\psi}_{0} )}^{2}_{\cH}
		\leq e ^{- \delta_{0} t + c (r_{1}^{\varepsilon}(\omega) + 1)} \norm{ \hat{\psi}_{0} }^{2}_{\cH}  = 0, \mbox{ as } \hat{\psi}_{0} = 0,
	\end{equation}
	which means the solution is unique.
	
	\textbf{Part 3: Dependence}. We assume that $ \psi_{1} = \tran{v_{1}}{\eta_{1}} $ and $ \psi_{2} = \tran{v_{2}}{\eta_{2}} $ are two solutions to \eqref{SNSM4} subjected to different initial datum $ \psi_{01} $ and $ \psi_{02} $, respectively. Reasoning as in uniqueness, we obtain the dependence of initial datum immediately.
\end{proof}

\begin{remark}
	Gao and Sun \cite{Gao2012} showed the well-posedness of 3D stochastic Navier-Stokes-Voigt equations in bounded domains. If we consider 3D stochastic Navier-Stokes-Voigt equations in some unbounded domains, we can obtain the well-posedness thanks to the Voigt term $ - \alpha \Delta (\pt_t u) $ by means of the arguments in \cite{AT2013,Gao2012}. However, when we take \eqref{SNSM} into account in three dimensional case, in which the memory effects is weaker than $ - \alpha \Delta (\pt_t u) $, the uniqueness is lost since the uniqueness of classical 3D Navier-Stokes equations is still open. Fortunately, two dimensional incompressible Navier-Stokes equations are well-posedness, so in our work, we investigate system \eqref{SNSM} in two dimension successfully.
\end{remark}
\section{Random attractors}\label{attractor}
In this section, we aim to establish uniform estimates for $ \psi $ and $ \phi $ with respect to the small parameter $ \varepsilon $, including long-time \textit{a priori} estimates and far-field estimates. We decompose the solution into two parts and get the higher order estimate. By means of a constructed compact subspace, we prove the compactness of solution and show the existence of random attractor.

\subsection{Uniform estimates for solutions}
Now we define a mapping $ \Phi: \bR^+ \times \Omega \times \cH \rightarrow \cH $ by
\begin{equation*}
	\Phi(t, \omega)\phi_0 = \phi(t, \omega, \phi_0).
\end{equation*}
From the definition in \ref{RDS}, we observe that $ \Phi $ is a continuous random dynamical system related to \eqref{SNSM3}.

In this subsection, we impose the uniform estimates for solutions in $ \cH $ and get the absorbing set.
\begin{lemma}\label{lem1}
	For every $ B(\omega)_{\omega \in \Omega} \in \cD $ and for $ \bP $-$ a.e. $ $ \omega \in \Omega $, there exist $ T_{3} = T_{3}(B , \omega) > 0 $ and a tempered function $ r _{1} ^{ \varepsilon } ( \omega ) $, such that for all $ \psi_0 ( \theta_{ - t } \omega ) \in B( \theta_{ - t } \omega ) $,
	\begin{equation*}
		\Vert \psi (t,\theta_{ - t} \omega,\psi_0(\theta_{ - t} \omega))\Vert^2_{\cH}  \leq c_1 (r _{1} ^{ \varepsilon } ( \omega ) + 1), \quad \forall \ t \geq T_{3} ,
	\end{equation*}
	and
	\begin{equation*}
		\Vert \phi (t,\theta_{ - t} \omega,\phi_0(\theta_{ - t} \omega))\Vert^2_{\cH}  \leq c_2 (r _{2} ^{ \varepsilon } ( \omega )  + 1), \quad \forall \ t \geq T_{3} ,
	\end{equation*}
	where $ c_{1}, \  c_{2} $ are positive deterministic constants independent of $ \varepsilon $, $ r _{2} ^{ \varepsilon } ( \omega ) = r _{1} ^{ \varepsilon } ( \omega ) + r(\omega) $ and $ r(\omega) $ is the tempered function in \eqref{s1}.
\end{lemma}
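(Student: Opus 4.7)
The plan is to re-run on the genuine limit solution $\psi = \tran{v}{\eta}$ produced by Theorem~\ref{wellp} the same $\cH$-energy estimate that was already carried out on the Galerkin approximants $\psi_n$ in Step 2 of the existence proof, namely the chain \eqref{Energy1}--\eqref{conclusion1}. The bound on $\phi$ is then a one-line consequence of the additive decomposition $\phi(t,\omega,\phi_0) = \psi(t,\omega,\psi_0) + \tran{\varepsilon y(\theta_t\omega)}{0}$ recorded in \eqref{phi}.

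Concretely, I would pair the first equation of \eqref{SNSM4} with $v$ in $\bH$ and the second equation with $\eta$ in $\cM$, add the two identities, and treat the right-hand side term by term exactly as in \eqref{e1}--\eqref{e4}: the memory time-derivative contributes the Dafermos dissipation $-\tfrac{\delta}{2}\normM{\eta}^2$ via \eqref{mu}; the cross term $\varepsilon\innerM{\eta}{y(\theta_t\omega)}$ is absorbed by Young's inequality together with \eqref{sv}; the convective term $b(v+\varepsilon y,v+\varepsilon y,v)$ reduces to $b(v,\varepsilon y,v)+b(\varepsilon y,\varepsilon y,v)$ by the antisymmetry $b(u,v,v)=0$ and is estimated by Lemma~\ref{b} and \eqref{sv}; the forcing plus Ornstein--Uhlenbeck correction is controlled through $f\in\bH$ and \eqref{sv}. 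Combined with the Poincar\'e split $\nu\|A^{1/2}v\|^2\geq\tfrac{\nu}{2}\|v\|_1^2+\tfrac{\nu\lambda_1}{2}\|v\|^2$ and $\delta_0=\min\{\nu\lambda_1/2,\delta/2\}$, this produces exactly the differential inequality \eqref{E1}.

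Next I would integrate by Gronwall's lemma, shift $\omega\mapsto\theta_{-t}\omega$ as in \eqref{eH1}--\eqref{eH2}, and use the ergodic theorem applied to the stationary process $\beta_1(\theta_\cdot\omega)$ together with the calibration \eqref{sigma} of $\sigma$ to obtain \eqref{zz}. This domination of $c_0\varepsilon^2\int_{-t}^0\beta_1(\theta_\tau\omega)\d\tau$ by $\delta_0 t/2$ collapses the Gronwall factor into a pure $e^{-\delta_0 t/2}$, after which temperedness of $B\in\cD$ absorbs the initial-datum term $e^{-\delta_0 t/2}\|\psi_0(\theta_{-t}\omega)\|_\cH^2$ into a $1$ for $t\geq T_2(B,\omega)$. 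What remains is the tail integral
\[
r_1^\varepsilon(\omega) = \int_{-\infty}^0 e^{\delta_0 s + c_0\varepsilon^2\int_s^0\beta_1(\theta_\tau\omega)\d\tau}\bigl(1+\varepsilon^2\beta_1(\theta_s\omega)\bigr)\d s,
\]
whose finiteness and temperedness are verified as in the excerpt via \eqref{s3} and \eqref{st}; this delivers the first inequality of the lemma with $T_3=\max\{T_1,T_2\}$.

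For $\phi$, the identity \eqref{phi} gives $\|\phi\|_\cH^2\leq 2\|\psi\|_\cH^2+2\varepsilon^2\|y(\theta_t\omega)\|^2$, and after the substitution $\omega\mapsto\theta_{-t}\omega$ the second summand becomes $2\varepsilon^2\|y(\omega)\|^2\leq 2\tilde c^2 r(\omega)$ by \eqref{sv} and \eqref{s1}; this is precisely the correction that promotes $r_1^\varepsilon(\omega)$ to $r_2^\varepsilon(\omega)=r_1^\varepsilon(\omega)+r(\omega)$. The principal obstacle is a bookkeeping one: ensuring that $r_1^\varepsilon(\omega)$ is tempered \emph{uniformly} in $\varepsilon\in(0,1]$. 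This is exactly why the threshold \eqref{sigma} for $\sigma$ was imposed, so that $\mathbb{E}\beta_1\leq 1/(4\sigma)$ and the competition between the decaying $e^{\delta_0 s}$ and the nested exponential $e^{c_0\varepsilon^2\int_s^0\beta_1\d\tau}$ can be closed via the tempered growth \eqref{s3}. Once this is in place, the constants $c_1,c_2$ depend only on $\nu,\lambda_1,\delta,\tilde c$ and $\|f\|$, and not on $\varepsilon$.
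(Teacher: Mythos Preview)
Your proposal is correct and follows essentially the same route as the paper: the paper's proof simply invokes the Galerkin estimate \eqref{conclusion1} with $\psi_n$ replaced by $\psi$ (precisely the re-run of \eqref{Energy1}--\eqref{E1} that you spell out), and then passes to $\phi$ via \eqref{phi} and \eqref{sv} exactly as you describe. The only point the paper makes slightly more explicit is that when one starts from $\phi_0(\theta_{-t}\omega)\in B(\theta_{-t}\omega)$, the shifted initial datum $\psi_0(\theta_{-t}\omega)=\phi_0(\theta_{-t}\omega)-\tran{\varepsilon y(\theta_{-t}\omega)}{0}$ is itself tempered (since $z$ is), so the absorption of $e^{-\delta_0 t/2}\|\psi_0(\theta_{-t}\omega)\|_\cH^2$ is justified; you implicitly use this but do not flag it.
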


\begin{proof}
	Substituting $ \psin $ in \eqref{conclusion1} by $ \psi $, we easily get that
	\begin{equation*}
		\Vert \psi (t,\theta_{ - t} \omega,\psi_0(\theta_{ - t} \omega))\Vert^2_{\cH}  \leq c_1 (r _{1} ^{ \varepsilon } ( \omega ) + 1), \quad \forall \ t \geq T_{3}.
	\end{equation*}
	Relation \eqref{phi} between $ \psi $ and $ \phi $ implies that
	\begin{equation*}
		\begin{aligned}
			\Vert \psi_0(\theta_{ - t} \omega) \Vert _{\cH}^2
			& = \Vert \phi_0(\theta_{ - t} \omega) - (\varepsilon y(\omega),0)\Vert _{\cH}^2\\
			& \leq 2\Vert \phi_0(\theta_{ - t} \omega) \Vert 	_{\cH}^2 + 2\varepsilon ^2 \Vert y(\omega) \Vert^2 \\
			& \leq 2\Vert \phi_0(\theta_{ - t} \omega) \Vert _{\cH}^2 + 2c\varepsilon ^2 \vert z(\omega) \vert^2.
		\end{aligned}
	\end{equation*}
	We know that $\phi_0( \theta_{ - t} \omega ) \in B( \theta_{ - t} \omega ) \subset \cD $ is tempered and $ z( \omega ) $ is also tempered, then $ \psi_0( \theta_{ - t} \omega ) $ is tempered. Therefore, by \eqref{s1}, \eqref{sv}, \eqref{phi} and \eqref{conclusion1}, we obtain that, for all $t \geq T_3 $,
	\begin{equation}\label{conclusion2}
		\begin{aligned}
			\Vert \phi (t,\theta_{ - t} \omega,\phi_0(\theta_{ - t} \omega))\Vert ^2_{\cH}
			& = \Vert \psi (t,\theta_{ - t} \omega,\psi_0(\theta_{ - t} \omega))
			+ (\varepsilon y(\omega),0)\Vert ^2_{\cH} \\
			& \leq 2 \Vert \psi (t,\theta_{ - t} \omega,\psi_0(\theta_{ - t} \omega))\Vert ^2_{\cH}
			+ 2\varepsilon ^2 \Vert y(\omega) \Vert^2 \\
			& \leq 2 c_1 (r _{1} ^{ \varepsilon } ( \omega )  + 1) + 2c \varepsilon ^2 \vert z(\omega) \vert^2 \\
			& \leq c_2 (r _{1} ^{ \varepsilon } ( \omega )  + \varepsilon^{2} r(\omega) + 1) \\
			& =: c_2 (r _{2} ^{ \varepsilon } ( \omega ) + 1),
		\end{aligned}
	\end{equation}
	where $ c_2 $ is a constant independent of $ \varepsilon $. This completes the proof.
\end{proof}
\begin{corollary}\label{rem1}
	Notice that $ \Phi( t , \omega ) \phi_0( \omega ) = \phi( t , \omega , \phi_0( \omega ) )  $. By \eqref{conclusion2}, we know that for all $ t \geq T_{3} $,
	\begin{equation}\label{Phi}
		\Vert \Phi( t , \theta_{ - t } \omega ) \phi_0( \theta_{ - t } \omega ) \Vert _{\cH}^2
		= \Vert \phi( t , \theta_{ - t } \omega , \phi_0( \theta_{ - t } \omega ) ) \Vert _{\cH}^2
		\leq c_2 (r _{2} ^{ \varepsilon } ( \omega ) + 1).
	\end{equation}
	Given $ \omega \in \Omega $, we denote
	\begin{equation}\label{K_omega}
		K(\omega) = \left\{ \phi \in \cH(\cO) : \Vert \phi \Vert ^2 _{\cH} \leq c_2 (r _{2} ^{ \varepsilon } ( \omega ) + 1) \right\}.
	\end{equation}
	It is clear that $\{K(\omega)\}_{\omega \in \Omega} \in \cD$. Moreover, \eqref{Phi} indicates that $ \{K(\omega)\}_{\omega \in \Omega} $ is a random absorbing set for $\Phi$ in $\cD$.
\end{corollary}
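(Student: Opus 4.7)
The plan is to verify two assertions about the family $\{K(\omega)\}_{\omega \in \Omega}$: first, that it belongs to the universe $\cD$ of tempered random sets, and second, that it absorbs every element of $\cD$ under the cocycle $\Phi$. Both claims should follow without new hard analysis by combining the uniform bound \eqref{Phi} supplied by the preceding lemma with the tempered character of the random variables $r_1^\varepsilon$ and $r$ that were established earlier in the paper.

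For temperedness, I would first observe that the radius of $K(\omega)$ is exactly $d(K(\omega)) = \sqrt{c_2 (r_2^\varepsilon(\omega)+1)}$ with $r_2^\varepsilon(\omega) = r_1^\varepsilon(\omega) + r(\omega)$. The function $r(\omega)$ was already shown to be tempered via \eqref{s2}, and the computation preceding \eqref{conclusion1} verified that $e^{-\frac{3\delta_0}{2}t}\, r_1^\varepsilon(\theta_{-t}\omega)\to 0$ as $t\to\infty$, which implies $r_1^\varepsilon$ is tempered. Hence $r_2^\varepsilon$ is tempered, and for any $\mu>0$,
\begin{equation*}
\lim_{t\to\infty} e^{-\mu t}\, d(K(\theta_{-t}\omega))
= \lim_{t\to\infty} e^{-\mu t}\, \sqrt{c_2 (r_2^\varepsilon(\theta_{-t}\omega)+1)} = 0,
\end{equation*}
so $\{K(\omega)\}_{\omega\in\Omega}\in\cD$. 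Measurability of $\omega\mapsto d(K(\omega))$ is inherited from the measurability of $r_1^\varepsilon$ and $r$, so $K$ is indeed a (closed) tempered random set.

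For the absorbing property, let $B=\{B(\omega)\}_{\omega\in\Omega}\in\cD$ be arbitrary and let $\phi_0(\theta_{-t}\omega)\in B(\theta_{-t}\omega)$. Lemma~\ref{lem1} produces $T_3 = T_3(B,\omega)>0$ such that for all $t\geq T_3$,
\begin{equation*}
\|\Phi(t,\theta_{-t}\omega)\phi_0(\theta_{-t}\omega)\|_{\cH}^{2}
= \|\phi(t,\theta_{-t}\omega,\phi_0(\theta_{-t}\omega))\|_{\cH}^{2}
\leq c_2\bigl(r_2^\varepsilon(\omega)+1\bigr).
\end{equation*}
By the very definition \eqref{K_omega} of $K(\omega)$, this means $\Phi(t,\theta_{-t}\omega)\phi_0(\theta_{-t}\omega)\in K(\omega)$. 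Since $\phi_0(\theta_{-t}\omega)$ was arbitrary in $B(\theta_{-t}\omega)$, we conclude $\Phi(t,\theta_{-t}\omega)B(\theta_{-t}\omega)\subseteq K(\omega)$ for every $t\geq T_3(B,\omega)$, which is the required absorption.

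I do not foresee any real obstacle here: the corollary is a straightforward bookkeeping consequence of Lemma~\ref{lem1}. The only subtle point is making the temperedness claim rigorous, which boils down to citing the sum-of-tempered-functions fact together with the explicit decay estimate on $e^{-\frac{3\delta_0}{2}t}r_1^\varepsilon(\theta_{-t}\omega)$ derived in the proof of the preceding lemma. Writing this out as two short paragraphs (one per assertion) should suffice.
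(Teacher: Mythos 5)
Your proposal is correct and follows exactly the route the paper intends: the absorption property is read off directly from Lemma \ref{lem1} (i.e., \eqref{conclusion2}) together with the definition \eqref{K_omega}, and membership in $\cD$ follows from the temperedness of $r_{1}^{\varepsilon}$ and $r$ established just before \eqref{conclusion1}. The paper simply declares these two facts ``clear,'' so your write-up is a faithful (and slightly more explicit) rendering of the same argument.
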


To prepare for the proof of the compactness of $ \Phi $, we decompose the system into  two subproblem (see \cite{GP2000,KT2009,Temam1988}): one decays exponentially and the other is bounded in a higher regular space. Since $ \psi = \tran{v}{\eta} $, we split the solution as $ \psi = \psiL + \psiN = \tran{\vL}{\etaL} + \tran{\vN}{\etaN} $. Also, $ \phi = \phiL + \phiN $. Then we have
\begin{equation}\label{linear}
	\left\{
	\begin{aligned}
		& \pt_{t} \vL + \nu A \vL + \int_{0}^{\infty} \mu (s) A \etaL \d s + B(v + \varepsilon y, \vL) = 0,\\
		& \pt_{t} \etaL = T \etaL + \vL,\\
		& v_{0L} = v_{0},\ \eta_{0L} = \eta^{0}.
	\end{aligned}
	\right.
\end{equation}
and
\begin{equation}\label{nonlinear}
	\left\{
	\begin{aligned}
		& \begin{aligned}
			& \pt_{t} \vN + \nu A \vN + \int_{0}^{\infty} \mu (s) A \etaN \d s \\
			& \qquad \qquad + B(v + \varepsilon y, \vN + \varepsilon y)
			= f + \varepsilon(\sigma y - \nu A y),
		\end{aligned}\\
		& \pt_{t} \etaN = T \etaN + \vN + \varepsilon y,\\
		& v_{0N} = 0,\ \eta_{0N} = 0.
	\end{aligned}
	\right.
\end{equation}

First, we show that $ \psiL $ and $ \phiL $ has an exponential decay.
\begin{lemma}\label{decompose}
	For every $ B(\omega)_{\omega \in \Omega} \in \cD $ and for $ \bP $-$ a.e. $ $ \omega \in \Omega $, the solutions of problem \eqref{linear} satisfy the following exponential decay property, i.e., for $ t > 0 $ and $ \psi_{0L} (\theta_{ - t } \omega) \in B(\theta_{ - t } \omega) $,
	\begin{equation}\label{exp1}
		\norm{\psiL(t , \theta_{ - t } \omega , \psi_{0L}(\theta_{ - t } \omega))}^{2}_{\cH} \leq e ^{- 2 \delta_{0} t} \norm{\psi_{0L}}^{2}_{\cH}.
	\end{equation}
	What's more,
	\begin{equation}\label{exp2}
		\norm{\phiL(t , \theta_{ - t } \omega , \phi_{0L}(\theta_{ - t } \omega))}^{2}_{\cH} \leq e ^{- 2 \delta_{0} t} \norm{\phi_{0L}}^{2}_{\cH},
	\end{equation}
	where $ \phi_{0L} = \psi_{0L} + \tran{\varepsilon y}{0} $.
\end{lemma}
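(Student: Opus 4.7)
The plan is to test the linear system \eqref{linear} in the $\cH$ inner product against $\psiL = \tran{\vL}{\etaL}$: pair the first equation with $\vL$ in $\bH$, the second with $\etaL$ in $\cM$, and sum. Three structural cancellations then drive the estimate. First, the cross-coupling term $\int_0^\infty \mu(s)(A\etaL(s),\vL)\d s$ arising from testing the velocity equation exactly coincides with $\innerM{\vL}{\etaL}$ from the memory equation, so these two contributions annihilate each other. Second, integration by parts in $s$ combined with $\etaL(0)=0$ yields $\innerM{T\etaL}{\etaL} = \frac{1}{2}\int_0^\infty \mu'(s)\norm{A^{1/2}\etaL(s)}^2\d s$. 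Third, because $\vL \in \bV$ and the coefficient $v+\varepsilon y$ is divergence-free, the convective trilinear contribution $b(v+\varepsilon y,\vL,\vL)$ vanishes by the standard antisymmetry identity used already in \eqref{Energy1}.

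After these simplifications the energy identity collapses to
$$\frac{1}{2}\frac{\d}{\d t}\norm{\psiL}_\cH^2 + \nu\norm{A^{1/2}\vL}^2 = \frac{1}{2}\int_0^\infty \mu'(s)\norm{A^{1/2}\etaL(s)}^2\d s.$$
Applying the Poincar\'e inequality to the dissipative term on the left and the Dafermos condition \eqref{mu} to the memory integral on the right, and then invoking $\delta_0 = \min\{\nu\lambda_1/2,\,\delta/2\}$, produces the scalar differential inequality $\tfrac{\d}{\d t}\norm{\psiL}_\cH^2 \leq -2\delta_0 \norm{\psiL}_\cH^2$. A direct application of Gronwall's inequality yields $\norm{\psiL(t,\omega,\psi_{0L})}_\cH^2 \leq e^{-2\delta_0 t}\norm{\psi_{0L}}_\cH^2$, and the substitution $\omega \mapsto \theta_{-t}\omega$ then delivers \eqref{exp1}.

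The bound \eqref{exp2} follows by the identical computation applied to the analogous linear problem for $\phiL$ with the shifted initial datum $\phi_{0L} = \psi_{0L} + \tran{\varepsilon y}{0}$: the convective term is still killed by antisymmetry, and the memory-kernel cancellations are unaffected by the shift in initial data. I do not expect a genuine obstacle here, since the one mildly delicate point is justifying the boundary behavior $\mu(s)\norm{\etaL(s)}_1^2\to 0$ as $s\to\infty$ when integrating by parts against $T\etaL$; this is handled in the customary way by first establishing the identity for smooth, compactly supported histories in $\mathfrak{D}(T)$ and passing to the limit via the $\cM$-regularity of $\etaL$.
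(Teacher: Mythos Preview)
Your proposal is correct and matches the paper's proof: both test \eqref{linear} against $\psiL$ in $\cH$, use the antisymmetry of $b$, the Dafermos condition \eqref{mu}, and Poincar\'e to reach $\tfrac{\d}{\d t}\norm{\psiL}_\cH^2 + 2\delta_0\norm{\psiL}_\cH^2 \leq 0$, then apply Gronwall. The only cosmetic difference is that for \eqref{exp2} the paper simply observes $\phiL = \psiL$ rather than rerunning the computation, but your argument is equally valid.
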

\begin{proof}
	Taking $ \cH $ inner product with system \eqref{linear} by $ \psiL $ and adding the equations, we have
	\begin{equation*}
		\onehalf \frac{\d}{\d t}\left( \norm{\vL}^{2} + \normM{\etaL}^{2} \right) + \nu \norm{A \vL}^{2} = \onehalf \int_{0}^{\infty} \mu'(s) \norm{A^\onehalf \etaL}^{2} \d s.
	\end{equation*}
	Same procedure as in Section \ref{wellposd} tells us that
	$$
		\frac{\d}{\d t} \norm{\psiL}^{2}_{\cH} + 2 \delta_{0} \norm{\psiL}^{2}_{\cH} \leq 0.
	$$
	Then by the Gronwall's inequality, we get exponential decay \eqref{exp1}. Note that $ \phiL = \psiL $, we obtain \eqref{exp2}.
\end{proof}

\begin{remark}\label{phiL}
	\eqref{exp2} shows that $ \phiL $ goes to 0 as $ t \rightarrow \infty $, i.e., given $ \zeta > 0 $, there is a sufficient large $ T^* $ such that for $ t \geq T^* $,
	$$
		\norm{\phiL(t , \theta_{ - t } \omega , \phi_{0L}(\theta_{ - t } \omega))}^{2}_{\cH} \leq \zeta.
	$$
\end{remark}

Next, we give higher order estimates for $ \psiN $ and $ \phiN $.
\begin{lemma}\label{lem2}
	For every $ B(\omega)_{\omega \in \Omega} \in \cD $ and for  $ \bP $-$ a.e. $ $ \omega \in \Omega $, there exist $ T_{4} = T_{4}(B , \omega) > 0 $ and a tempered function $ r _{3} ^{ \varepsilon } ( \omega ) $ such that for all $ \psi_0 ( \theta_{ - t } \omega ) \in B( \theta_{ - t } \omega ) $,
	\begin{equation*}
		\Vert \psiN (t, \theta_{ - t} \omega, \psi_{0N}(\theta_{ - t}  \omega)) \Vert^2_{\cH^1} \leq c_3 (r _{3} ^{ \varepsilon } ( \omega ) + 1), \quad \forall \ t \geq T_{4} ,
	\end{equation*}
	and
	\begin{equation*}
		\Vert \phiN (t,\theta_{ - t} \omega,\phi_{0N}(\theta_{ - t}  \omega))\Vert^2_{\cH^1} \leq c_4 (r _{4} ^{ \varepsilon } ( \omega ) + 1), \quad \forall \ t \geq T_{4} ,
	\end{equation*}
	where $ c_{3}, \  c_{4} $ are positive deterministic constants independent of $ \varepsilon $, $ r _{4} ^{ \varepsilon } ( \omega ) = r _{3} ^{ \varepsilon } ( \omega ) + r(\omega) $ and $ r(\omega) $ is the tempered function in \eqref{s1}.
\end{lemma}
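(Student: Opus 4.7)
The plan is to run the energy method in $\cH^{1}$ for the nonlinear sub-system \eqref{nonlinear} and conclude via a Gronwall argument that exactly mirrors the one used in Lemma \ref{lem1}. First I would test the $\vN$-equation by $A\vN$ in $L^{2}(\cO)$ and the $\etaN$-equation by $A\etaN$ in $\cM$; summing and integrating by parts in $s$ (using $\etaN(0)=0$ and the Dafermos condition \eqref{mu}) gives
\begin{equation*}
\onehalf \frac{\d}{\d t}\norm{\psiN}_{\cH^{1}}^{2} + \nu\norm{A\vN}^{2} + \frac{\delta}{2}\norm{\etaN}_{\cM^{1}}^{2} \leq \text{(cross terms)} + \text{(nonlinear)} + \text{(forcing)}.
\end{equation*}
The memory cross terms $\int_{0}^{\infty}\mu(s)(A\etaN(s),A\vN)\,\d s$ appear with the same sign from both equations; Cauchy--Schwarz in the measure $\mu(s)\,\d s$ combined with Young's inequality absorbs them into $\nu\norm{A\vN}^{2}$ and $\frac{\delta}{2}\norm{\etaN}_{\cM^{1}}^{2}$. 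The $\varepsilon y$ drift in the $\etaN$-equation is treated identically using \eqref{sv} and contributes a term of order $\varepsilon^{2}\beta_{1}(\theta_{t}\omega)$ to the right-hand side.

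The main obstacle is the trilinear term $b(v+\varepsilon y,\vN+\varepsilon y,A\vN)$. I would split it as
\begin{equation*}
b(v+\varepsilon y,\vN,A\vN) + \varepsilon\, b(v+\varepsilon y,y,A\vN),
\end{equation*}
and bound the first piece via the first inequality of Lemma \ref{b}, obtaining the upper bound $\hat{c}\norm{v+\varepsilon y}^{\onehalf}\norm{A^{\onehalf}(v+\varepsilon y)}^{\onehalf}\norm{A^{\onehalf}\vN}^{\onehalf}\norm{A\vN}^{\frac{3}{2}}$. Young's inequality places $\norm{A\vN}^{2}$ into the viscous dissipation and leaves a factor $c\norm{v+\varepsilon y}^{2}\norm{A^{\onehalf}(v+\varepsilon y)}^{2}$ multiplying $\norm{A^{\onehalf}\vN}^{2}\le\norm{\psiN}_{\cH^{1}}^{2}$. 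The second piece is handled analogously with $y$-norms replaced by $\tilde{c}\abs{z(\theta_{t}\omega)}$ via \eqref{sv}. Finally, the forcing gives $\frac{\nu}{8}\norm{A\vN}^{2}+c(1+\varepsilon^{2}\beta_{1}(\theta_{t}\omega))$ as in \eqref{e4}.

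Assembling everything yields a differential inequality
\begin{equation*}
\frac{\d}{\d t}\norm{\psiN}_{\cH^{1}}^{2} + \delta_{0}\norm{\psiN}_{\cH^{1}}^{2} \leq \alpha(t,\omega)\norm{\psiN}_{\cH^{1}}^{2} + G(t,\omega),
\end{equation*}
with $\alpha(t,\omega)=c(\norm{v+\varepsilon y}^{2}\norm{A^{\onehalf}(v+\varepsilon y)}^{2}+\varepsilon^{2}\beta_{1}(\theta_{t}\omega))$ and $G(t,\omega)=c(1+\varepsilon^{2}\beta_{1}(\theta_{t}\omega))$. The key observation is that Lemma \ref{lem1} simultaneously bounds $\norm{v+\varepsilon y}^{2}$ pointwise and $\int_{0}^{t}\norm{A^{\onehalf}(v+\varepsilon y)}^{2}\,\d s$ on any time interval, so the cumulative exponent $\int_{s}^{t}\alpha\,\d\tau$ stays finite along the shifted orbit $\omega\mapsto\theta_{-t}\omega$; the $\beta_{1}$-contribution is handled by the same ergodic estimate \eqref{zz} already used in Lemma \ref{lem1}. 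Since $\psi_{0N}=0$, Gronwall's inequality from $0$ to $t$ with the substitution $\omega\mapsto\theta_{-t}\omega$ produces $\norm{\psiN(t,\theta_{-t}\omega,0)}_{\cH^{1}}^{2}\le c_{3}(r_{3}^{\varepsilon}(\omega)+1)$ for $t\ge T_{4}$, where $r_{3}^{\varepsilon}$ is tempered by the same argument that handled $r_{1}^{\varepsilon}$. Since $\phiL=\psiL$ by Lemma \ref{decompose}, we have $\phiN=\psiN+\tran{\varepsilon y}{0}$, and the triangle inequality together with \eqref{sv} and \eqref{s1} give $\norm{\phiN}_{\cH^{1}}^{2}\le c_{4}(r_{4}^{\varepsilon}(\omega)+1)$ with $r_{4}^{\varepsilon}=r_{3}^{\varepsilon}+r$.
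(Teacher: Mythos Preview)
Your strategy---test \eqref{nonlinear} by $A\psiN$, use the Dafermos condition, bound the trilinear form with Lemma~\ref{b}, and close via Gronwall with $\psi_{0N}=0$---is exactly what the paper does. One correction, however: the memory cross terms $\int_{0}^{\infty}\mu(s)(A\etaN,A\vN)\,\d s$ arising from the $\vN$-equation and $\innerMs{\vN}{\etaN}$ arising from the $\etaN$-equation carry \emph{opposite} signs and cancel exactly (this cancellation is the whole point of testing in the paired space $\cH^{1}$), so no Cauchy--Schwarz step is needed there. If they really had the same sign as you assert, absorbing a term of size $\tfrac{4\kappa}{\nu}\norm{\etaN}_{\cM^{1}}^{2}$ into the Dafermos dissipation $\tfrac{\delta}{2}\norm{\etaN}_{\cM^{1}}^{2}$ would force $\delta>8\kappa/\nu$, which is nowhere assumed; so your version of this step would not close.

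A minor stylistic difference: the paper further decomposes the first argument of the trilinear form as $v=\vL+\vN$ and estimates six pieces $b(\vL,\vN,A\vN)$, $b(\vL,\varepsilon y,A\vN)$, $b(\vN,\vN,A\vN)$, etc.\ separately, whereas you keep $v+\varepsilon y$ intact. Both routes lead to a Gronwall coefficient controlled by $\norm{v}^{2}\norm{v}_{1}^{2}$ and $\beta_{1}(\theta_{t}\omega)$, and both rely on \eqref{conclusion1} to bound $\sup\norm{v}^{2}$ and $\int_{0}^{t}\norm{v}_{1}^{2}\,\d s$ along the shifted orbit, so the conclusion and the tempered form of $r_{3}^{\varepsilon}(\omega)$ are the same.
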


\begin{proof}
	Taking the inner product with system \eqref{nonlinear} by $ A \psiN = \tran{A \vN}{A \etaN}$, we have
	\begin{equation}\label{EEnergy1}
		\begin{aligned}
			& \frac{1}{2} \frac{\d}{\d t} \left( \norm{\vN}^2_{1} + \norm{\etaN} ^2 _{\mathcal{M}^1} \right) + \nu \norm{A \vN}^2 \\
			& \quad = \frac{1}{2} \int_{0}^{\infty} \mu'(s) \norm{A \etaN} ^2 \d s + \varepsilon \innerMs{\etaN}{y(\theta_t \omega)} \\
			& \quad \quad -  b(v + \varepsilon y(\theta_t \omega), v + \varepsilon y(\theta_t \omega) , A \vN)\\
			& \quad \quad + (f + \varepsilon \left( \sigma y(\theta_t \omega) - \nu A y(\theta_t \omega)\right) , A \vN)\\
			& \quad =: J_1 + J_2 + J_3 + J_4.
		\end{aligned}
	\end{equation}
	It is clear from the Poincar\'{e} inequality that
	\begin{equation}\label{ee0}
		\nu \norm{A \vN}^2 \geq \frac{\nu}{2} \norm{A \vN}^2
		+ \frac{\nu \lambda_1}{2} \norm{\vN}	_{1}^2.
	\end{equation}
	
	Now we are ready to estimate $ J_i, i = 1,2,3,4 $ respectively.
	For $J_1$, from the Dafermos condition \eqref{mu}, we get
	\begin{equation}\label{ee1}
		J_1 \leq -\frac{\delta}{2} \int_{0}^{\infty} \mu (s) \norm{A \etaN}^2 \d s = -\frac{\delta}{2} \norm{\etaN} ^2 _{\mathcal{M}^{1}}.
	\end{equation}
	By utilizing the H\"{o}lder's inequality, the Young's inequality with $ \epsilon $ and \eqref{sv}, one obtains that
	\begin{equation}\label{ee2}
		\begin{aligned}
			J_2
			& = \varepsilon \int_{0}^{\infty} \mu (s) (A \etaN,A y(\theta_t\omega))\d s\\
			& \leq \frac{\delta}{4}\int_{0}^{\infty} \mu (s) \norm{A \etaN} ^2\d s
			+ \frac{\varepsilon^2}{\delta} \int_{0}^{\infty} \mu (s) \Vert A y(\theta_t\omega) \Vert ^2\d s\\
			& \leq \frac{\delta}{4} \norm{\etaN} ^2 _{\mathcal{M}^{1}}
			+ \frac{c \varepsilon^2 \kappa}{\delta}  \beta_{1}(\theta_{ t } \omega).
		\end{aligned}
	\end{equation}
	A direct calculation deduces that
	\begin{equation*}
		\begin{aligned}
			\vert J_3 \vert
			& \leq \abs{b(\vL , \vN , A \vN)}
			+ \abs{b(\vL , \varepsilon y(\theta_t \omega) , A \vN)}\\
			& \quad + \abs{b(\vN , \vN , A \vN)}
			+ \abs{b(\vN , \varepsilon y(\theta_t \omega) , A \vN)}\\
			& \quad + \abs{b(\varepsilon y(\theta_t \omega) , \vN , A \vN)}
			+ \abs{b(\varepsilon y(\theta_t \omega) , \varepsilon y(\theta_t \omega) , A \vN)}.
		\end{aligned}
	\end{equation*}
	It follows from the H\"{o}lder's inequality, the Young's inequality with $ \epsilon $, Lemma \ref{b} and \eqref{sv} that
	\begin{align*}
		\vert b(\vL , \vN , A \vN) \vert
		& \leq \hat{c} \norm{\vL}^{\onehalf} \norm{A^{\onehalf} \vL}^{\onehalf} \norm{A^{\onehalf} \vN}^{\onehalf} \norm{A \vN}^{\frac{3}{2}}  \\
		& \leq \frac{ \nu }{48} \norm{A \vN} ^{2} +
		\left( \frac{11664 \hat{c}^4}{ \nu ^3 } \norm{\vL}^2 \norm{\vL}_1^2 \right) \norm{\vN}_1^2 ,
	\end{align*}
	\begin{align*}
		\vert b(\vL , \varepsilon y(\theta_t \omega) , A \vN) \vert
		& \leq \varepsilon \hat{c} \norm{\vL}^{\onehalf} \norm{A^{\onehalf} \vL}^{\onehalf} \norm{A^{\onehalf} \varepsilon y(\theta_t \omega)}^{\onehalf} \norm{A \varepsilon y(\theta_t \omega)}^{\onehalf} \norm{A \vN}  \\
		& \leq \frac{ \nu }{48} \norm{A \vN} ^{2} + 6 \varepsilon^2 c_0 \norm{\vL} \norm{\vL}_1 \beta_{1}(\theta_{ t } \omega) ,
	\end{align*}
	\begin{align*}
		\vert b(\vN , \vN , A \vN) \vert
		& \leq \hat{c} \norm{\vN}^{\onehalf} \norm{A^\onehalf \vN} \norm{A \vN} ^{ \frac{3}{2} }  \\
		& \leq \frac{ \nu }{48} \norm{A \vN} ^{2} + \left( \frac{11664\hat{c}^4}{ \nu ^3 } \norm{\vN}^2 \norm{\vN}_1^2 \right) \norm{\vN}_1^2 ,
	\end{align*}
	\begin{align*}
		\vert b(\vN , \varepsilon y(\theta_t \omega) , A \vN) \vert
		& \leq \varepsilon \hat{c} \norm{\vN}^\onehalf \norm{A^\onehalf \vN}^\onehalf \norm{A^\onehalf y ( \theta_t \omega )}^\onehalf \norm{A y ( \theta_t \omega )}^\onehalf \norm{A \vN}  \\
		& \leq \frac{ \nu }{48} \norm{A \vN} ^{2} +
		+ \frac{\nu \lambda_{1}}{4} \norm{\vN}_1^2 + \frac{ 144 (\varepsilon \hat{c} \tilde{c})^4 }{\nu^3 \lambda_{1}} \beta_{1}(\theta_{ t } \omega) \norm{\vN} ^2,
	\end{align*}
	\begin{align*}
		\vert b(\varepsilon y(\theta_t \omega) , \vN , A \vN) \vert
		& \leq \varepsilon \hat{c} \norm{y ( \theta_t \omega )}^\onehalf \norm{A^\onehalf y ( \theta_t \omega )}^\onehalf \norm{A^\onehalf \vN}^\onehalf \norm{A \vN}^{\frac{3}{2}}  \\
		& \leq \frac{ \nu }{48} \norm{A \vN} ^{2} +  \frac{ 11664 (\varepsilon \hat{c} \tilde{c})^4 }{\nu ^3} \beta_{1}(\theta_{ t } \omega) \norm{\vN}_1^2
	\end{align*}
	and
	\begin{align*}
		\vert b(\varepsilon y(\theta_t \omega) , \varepsilon y(\theta_t \omega) , A \vN) \vert
		& \leq \varepsilon ^2 \hat{c} \norm{ y ( \theta_t \omega ) }^\onehalf \norm{A^\onehalf y ( \theta_t \omega ) } \norm{A y ( \theta_t \omega ) }^\onehalf \norm{A \vN}  \\
		& \leq \frac{ \nu }{48} \norm{A \vN} ^{2} + \frac{ 8 (\varepsilon \hat{c})^4 \tilde{c}^2 }{\nu} \beta_{1}(\theta_{ t } \omega).
	\end{align*}
	Hence
	\begin{equation}\label{ee3}
		\begin{aligned}
			\vert J_3 \vert
			& \leq \frac{ \nu }{8} \norm{A \vN} ^{2}
			+ \frac{\nu \lambda_{1}}{4} \norm{\vN}_1^2 + c \varepsilon^{2} \beta_{1}(\theta_{ t } \omega) \left( \norm{\vN} ^2 + \norm{\vL} \norm{\vL}_1 \right) \\
			& \quad + \left( c_5 \beta_{1}(\theta_{ t } \omega) + c_6 \norm{\vN}^2 \norm{\vN}_1^2 + c_6 \norm{\vL}^2 \norm{\vL}_1^2 \right) \norm{\vN}_1^2.
		\end{aligned}
	\end{equation}
	where $ c_5 = \frac{ 11664 ( \hat{c} \tilde{c})^4 }{\nu ^3} $ and $ c_6 = \frac{11664 \hat{c}^4}{ \nu ^3 } $.\\
	
	Since $ f \in \bm{H} $, it can be deduced from the Young's inequality with $ \epsilon $ that
	\begin{equation}\label{ee4}
		\begin{aligned}
			J_4
			& \leq \frac{ \nu }{8} \norm{A \vN} ^2  + \frac{2}{\nu} \left(\Vert f \Vert ^2 + 2c \varepsilon^{2} \left(\Vert y(\theta_t \omega) \Vert ^2 + \Vert A y(\theta_t \omega) \Vert ^2\right) \right) \\
			& \leq \frac{ \nu }{8} \norm{A \vN} ^2 + c \left( 1 + \varepsilon^{2} \beta_{1}(\theta_{ t } \omega) \right).
			\end{aligned}
	\end{equation}
	Then by adding \eqref{EEnergy1}, \eqref{ee1}--\eqref{ee4}, we have
	\begin{equation*}
		\begin{aligned}
			& \quad \frac{\d}{\d t} \Vert \psiN (t,\omega,\psi_{0N}(\omega))\Vert^2_{\cH ^1} + \frac{\nu}{2} \norm{A \vN}^2 \\
			& \leq \left( - \delta_0 + c_5 \varepsilon^4 \beta_{1}(\theta_{ t } \omega) + c_6 \norm{\vN}^2 \norm{\vN}_1^2 \right) \Vert \psiN (t,\omega,\psi_{0N}(\omega))\Vert^2_{\cH ^1} \\
			& \quad + c \left(1 + \varepsilon^{2} \beta _1 (\theta_t \omega) + \varepsilon^{2} \beta_{1}(\theta_{ t } \omega) (\norm{\vN} ^2 + \norm{\vL} \norm{\vL}_1) \right).
		\end{aligned}
	\end{equation*}
	Since $ (\vL + \vN) \in L^{\infty}(T_{3},\infty; \bH) \cap L^{2}(T_{3},\infty; \bV) $ from \eqref{unifb1} and $ \psi_{0N} = \tran{0}{0} \in \cH^{1} $, an argument similar to the one used in Lemma \ref{lem1} shows that there exists a $ T_{4} = T_{4}( B, \omega ) \geq T_{3} $ such that for all $ t \geq T_{4} $,
	\begin{equation}\label{conclusion3}
		\Vert \psiN (t,\theta_{ - t} \omega,\psi_{0N}(\theta_{ - t}  \omega))\Vert^2_{\cH^1} \leq c_3 (r _{3} ^{ \varepsilon } ( \omega ) + 1),
	\end{equation}
	and
	\begin{equation}\label{conclusion4}
		\Vert \phiN (t,\theta_{ - t} \omega,\phi_{0N}(\theta_{ - t}  \omega))\Vert^2_{\cH^1} \leq c_4 (r _{4} ^{ \varepsilon } ( \omega ) + 1),
	\end{equation}
	where
	$$
		r_{3} ^{ \varepsilon } ( \omega ) = \int_{-\infty}^{0} e ^{ \delta_0 s + c_5 \varepsilon^{4} \int_{s}^{0} \beta _1 (\theta_{\tau} \omega) \d \tau } \left( 1 + \varepsilon^{2} \beta _1 (\theta_s \omega) + c_1 \varepsilon^{2} \beta _1 (\theta_s \omega) (r_{1}^\varepsilon (\omega) + 1) \right)  \d s.
	$$
	It is easy to show that $ r_{3} ^{ \varepsilon } ( \omega ) $ is a tempered random variable (Analogously to $ r _{1} ^{ \varepsilon } (\omega) $) and $ r _{4} ^{ \varepsilon } ( \omega ) = r _{3} ^{ \varepsilon } ( \omega ) + \varepsilon ^{2} r( \omega ) $.
	This completes the proof.
\end{proof}

We denote $ Q_{R} = \{ x \in \cO : \vert x \vert < R \} $ and $ Q_{R}^c = \cO \setminus Q_{R} $. Note that the embedding $ \cH^{1} \hookrightarrow \cH $ is not compact any more in unbounded domains, it is hard to prove the exsitence of uniqueness of the random attractor. Inspired by \cite{BLW2009,Wang1999,Wang2009,Wang2014}, we obtain the far-field values of solutions which can be applied to get the asymptotic compactness in unbounded domains.
\begin{lemma}\label{lem3}
	Suppose that $ B = \left\{ B( \omega ) \right\}_{\omega \in \Omega} \in \cD $ and $ \psi_0( \omega ) \in B( \omega ) $. Then for every $ \zeta > 0 $ and $ \bP $-$ a.e. $ $ \omega \in \Omega $, there exist a $ T_{7} = T_{7} ( B , \omega , \zeta ) $ and a $ R_{3} = R_{3} ( \omega , \zeta ) $ such that for all $ t \geq T_{7} $,
	\begin{equation}\label{psi_zeta}
		\Vert \psi ( t, \theta_{ - t} \omega, \psi_0( \theta_{ - t} \omega ) ) \Vert^2 _{ \cH ( Q_{R_{3}}^{c} ) } \leq \zeta.
	\end{equation}
\end{lemma}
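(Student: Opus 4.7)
The plan is to employ the cutoff-function technique of Wang 1999 and BLW 2009, estimating the tail of $\psi$ directly rather than going through the linear/nonlinear decomposition. Choose a smooth nondecreasing $\chi:[0,\infty)\to[0,1]$ with $\chi\equiv 0$ on $[0,1]$, $\chi\equiv 1$ on $[2,\infty)$, and $|\chi'|$ bounded, then set $\chi_R(x):=\chi(|x|^2/R^2)$. This cutoff vanishes on $Q_R$, equals one on $Q_{\sqrt 2 R}^c$, and $|\nabla \chi_R(x)| \leq c/R$. Since
\[
  \|\psi\|_{\cH(Q_R^c)}^2 \leq \int_{\cO}\chi_R^2 |v|^2 \d x + \int_{0}^{\infty}\mu(s)\int_{\cO}\chi_R^2 |\nabla \eta(s)|^2 \d x \d s,
\]
it suffices to estimate this weighted quantity.

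The first step is to test $\eqref{SNSM4}_1$ against $\chi_R^2 v$ in $[L^2(\cO)]^2$ and simultaneously pair $\eqref{SNSM4}_2$ with $\chi_R^2 \eta$ in $\cM$, then add. Because $\chi_R$ depends only on $x$, it commutes with $T=-\pt_s$ and preserves $\eta(0)=0$, so the Dafermos condition still yields $-\tfrac{\delta}{2}\int_0^\infty \mu(s)\|\chi_R\nabla \eta(s)\|^2\d s$ after integrating by parts in $s$. Integration by parts against $-\nu\Delta v$ and the memory term produces $\nu \|\chi_R \nabla v\|^2$ plus the memory dissipation on the left, up to commutator terms of size $O(1/R)$ involving $\nabla\chi_R\cdot \nabla v$. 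Since $\chi_R^2 v$ is not solenoidal, the pressure contributes $-\int_\cO p \, v\cdot\nabla\chi_R^2\,\d x$, again an $O(1/R)$ term controlled by a local Stokes bound $\|p\|_{L^2_{\mathrm{loc}}} \leq c(\|v\|_{\bV}^2 + \|f\|)$. After applying the generalized Poincar\'e inequality to absorb $\nu\|\chi_R \nabla v\|^2$ into $\delta_0\|\chi_R v\|^2$, one reaches a differential inequality of the schematic form
\begin{equation*}
  \frac{\d}{\d t}\|\chi_R \psi\|_{\cH}^2 + \bigl(\delta_0 - c_0\varepsilon^2 \beta_1(\theta_t\omega)\bigr)\|\chi_R \psi\|_{\cH}^2 \leq \tfrac{c}{R}\Theta(t,\omega) + \Lambda_R(t,\omega),
\end{equation*}
where $\Theta(t,\omega)$ involves the already-controlled quantities $\|v\|_1^2$, $\|\eta\|_\cM^2$ and $\beta_1(\theta_t\omega)$, and $\Lambda_R(t,\omega)\to 0$ as $R\to\infty$ because $f \in \bH$ and $h\in\bW$ imply $\int_{Q_R^c}(|f|^2+|h|^2+|Ah|^2)\d x \to 0$.

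The critical nonlinear tail bound is $b(v,v,\chi_R^2 v)=-\tfrac{1}{2}\int_\cO v\cdot\nabla\chi_R^2\,|v|^2\d x$ (using $\nabla\cdot v=0$), estimated by $(c/R)\|v\|^{3/2}\|v\|_1^{3/2}$ via Ladyzhenskaya's inequality in 2D; the mixed terms with $\varepsilon y(\theta_t\omega)$ are absorbed by Young's inequality combined with $\|y(\theta_t\omega)\|_\bW \leq \tilde c |z(\theta_t\omega)|$ and the temperedness of $\beta_1$. Applying Gronwall on $[0,t]$ with $\omega$ replaced by $\theta_{-t}\omega$, exactly as in the proof of Lemma \ref{lem1} (see \eqref{eH2}--\eqref{eH3}), yields
\begin{equation*}
  \|\chi_R \psi(t,\theta_{-t}\omega,\psi_0(\theta_{-t}\omega))\|_\cH^2 \leq e^{-\delta_0 t/2}\|\psi_0(\theta_{-t}\omega)\|_\cH^2 + \tfrac{c}{R}\bigl(r_1^\varepsilon(\omega)+1\bigr) + \widetilde\Lambda_R(\omega),
\end{equation*}
with $\widetilde\Lambda_R(\omega)\to 0$ as $R\to\infty$. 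Choosing first $t \geq T_7$ large, using temperedness of $\psi_0(\theta_{-t}\omega)$ to make the first term less than $\zeta/3$, and then $R \geq R_3$ large to make each of the two remaining pieces less than $\zeta/3$, produces \eqref{psi_zeta}.

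The main obstacle is the consistent treatment of the memory term under the cutoff: one must verify that $\chi_R \eta$ satisfies the same transport identity (which holds because $\chi_R$ is $s$-independent) and that the boundary term at $s=0$ in the $s$-integration by parts vanishes via $\eta^t(0)=0$. A secondary technical point is the pressure contribution, absent in the projected equation \eqref{SNSM2} but reappearing here because $\chi_R^2 v$ is not solenoidal; a local Stokes-type estimate handles it. Once these two details are in place, the rest is a direct adaptation of the $\cH$-estimate proof of Lemma \ref{lem1}, with the extra smallness supplied by the factor $1/R$ and by the tails of $f$ and $h$.
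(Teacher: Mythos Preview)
Your approach is essentially the same cutoff--weighted-energy--Gronwall scheme as the paper's, with two minor technical differences worth noting. First, the paper does not carry the factor $c_0\varepsilon^2\beta_1(\theta_t\omega)$ in the Gronwall exponent; instead, the terms coming from $b(v,\varepsilon y,\rho v)$ are bounded by $\varepsilon^2\|A^{1/2}y\|_{L^2_\rho}^2\|v\|^2$ (unweighted $\|v\|$), so the entire right-hand side is treated as a source term already controlled by Lemma~\ref{lem1}, and a constant-coefficient Gronwall with rate $\delta_2$ suffices. Your variable-coefficient Gronwall also works but requires re-invoking the ergodic bound \eqref{zz}. Second, your handling of $b(v,v,\chi_R^2 v)$ via the commutator identity $-\tfrac12\int v\cdot\nabla\chi_R^2\,|v|^2\,\d x$ is the correct one; the paper claims $b(v,v,\rho v)=0$ by the (invalid) inequality $|\int(v\cdot\nabla)v\cdot\rho v\,\d x|\leq |b(v,v,v)|$, whereas in fact this term is only $O(1/k)$, not zero. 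Similarly, your explicit mention of the pressure contribution from the non-solenoidal test function $\chi_R^2 v$ is a point the paper glosses over when it writes $(Av,\rho v)=\int|\nabla v|^2\rho\,\d x+\ldots$; both are genuine $O(1/R)$ commutators that fit into your $\Theta$-term, so neither affects the outcome.
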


Bates, Lu and Wang \cite{BLW2009} introduced a cutoff function in $ \cO $ for the first step. Let $ \rho $ be a smooth function defined on $ \bR ^{+} $ such that $ 0 \leq \rho (s) \leq 1 $ for all $ s \in \bR ^{+} $, and
$$
	\rho (s) =
	\left\{
	\begin{aligned}
		& 0 && 0 < s \leq 1, \\
		& 1 && s \geq 2.
	\end{aligned}
	\right.
$$
Then $ \rho ' ( s ) = 0 $ for all $ s \in \bR^{+}\backslash [ 1 , 2 ] $.

It is difficult for us to keep going like \cite{BLW2009} since we can not contruct the cutoff energy without $ \beta u $ immediately, that is, the Poincar\'{e} inequality does not work for $ - \Delta u $ when the energy is cutoff (i.e., weighted). In our work, we will show a generalized Poincar\'{e} inequality. To prove it, we suppose that
$$ \vert \rho ' (s) \vert \leq \sqrt{\lambda_1} \rho(s) \leq \sqrt{\lambda_1} $$
for all $ s \in \bR ^{+} $. The assumption here is different from that in \cite{BLW2009} but reasonable because $ \rho $ is a smooth function.
For our convenience, we denote
$$
	\Vert u \Vert _{L^{2}_{\rho}} ^{2} = \int_{ \cO } \vert u \vert ^2 \rho \left( \frac{ \vert x \vert ^2 }{ k^2 } \right) \d x, \quad \forall \ u \in \bm{H},
$$
where $ k $ is a large positive constant.
Thus we have the following generalized Poincar\'{e} inequality.

\begin{lemma}\label{poincare}
	For all $ u \in \bm{V} $, we have
	\begin{equation*}
		\frac{\lambda_1}{4} \Vert u \Vert _{L^{2}_{\rho}} ^{2} \leq \Vert \nabla u \Vert _{L^{2}_{\rho}} ^{2},
	\end{equation*}
	where $ \lambda_1 $ is the constant in the Poincar\'{e} inequality.
\end{lemma}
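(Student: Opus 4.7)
The plan is to reduce the inequality to the standard (unweighted) Poincar\'{e} inequality via the substitution $\omega(x):=\sqrt{\rho(|x|^2/k^2)}$. Since $u\in\bV\subset[H_0^1(\cO)]^2$ and $\omega$ is smooth and bounded on $\cO$, each component $u_i\omega$ belongs to $H_0^1(\cO)$. I would apply the scalar Poincar\'{e} inequality to $u_i\omega$, sum over $i=1,2$, and use $\omega^2=\rho(|x|^2/k^2)$ to obtain
\[
	\lambda_1 \norm{u}_{L^2_\rho}^{2} \leq \int_{\cO} \sum_{i=1}^{2}\abs{\nabla(u_i\omega)}^{2}\d x.
\]

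Next I would expand $\nabla(u_i\omega)=\omega\nabla u_i+u_i\nabla\omega$, sum the squares, and apply Young's inequality with parameter $\epsilon>0$ to the (non sign-definite) cross term to obtain
\[
	\sum_{i=1}^{2}\abs{\nabla(u_i\omega)}^2 \leq (1+\epsilon)\omega^2\abs{\nabla u}^2 + \left(1+\tfrac{1}{\epsilon}\right)\abs{u}^2\abs{\nabla\omega}^2.
\]
Then I would estimate $\nabla\omega$ using the standing hypothesis $|\rho'(s)|\leq\sqrt{\lambda_1}\,\rho(s)$: a direct computation gives $\nabla\omega=\rho'(|x|^2/k^2)\,x/(k^2\sqrt{\rho(|x|^2/k^2)})$, whence $|\nabla\omega|^2=|\rho'|^2|x|^2/(k^4\rho)\leq \lambda_1|x|^2\rho/k^4$. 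Because $\rho'(s)=0$ off $[1,2]$, the field $\nabla\omega$ is supported in $\{|x|^2\leq 2k^2\}$, so $|\nabla\omega|^2\leq 2\lambda_1\rho/k^2$ everywhere on $\cO$.

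Combining these ingredients and integrating yields
\[
	\lambda_1\norm{u}_{L^2_\rho}^2 \leq (1+\epsilon)\norm{\nabla u}_{L^2_\rho}^2 + \left(1+\tfrac{1}{\epsilon}\right)\frac{2\lambda_1}{k^2}\norm{u}_{L^2_\rho}^2.
\]
Choosing $\epsilon=1$ and taking $k$ sufficiently large (recall $k$ is, by design, a ``large positive constant''), the last term can be absorbed into the left-hand side, and after dividing by $1+\epsilon=2$ one obtains the desired $\tfrac{\lambda_1}{4}\norm{u}_{L^2_\rho}^2\leq\norm{\nabla u}_{L^2_\rho}^2$. The main obstacle is the non sign-definite cross term in the expansion of $|\nabla(u_i\omega)|^2$, which forces the $(1+\epsilon)$ factor on the principal term and consequently requires the ``large $k$'' hypothesis; the calibration $|\rho'|\leq\sqrt{\lambda_1}\,\rho$ is chosen precisely so that $|\nabla\omega|^2$ is controlled by a multiple of $\rho/k^2$, making that absorption possible.
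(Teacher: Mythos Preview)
Your argument is correct and follows the same route as the paper: set $\omega=\rho^{1/2}(|x|^2/k^2)$, apply the unweighted Poincar\'e inequality to $\omega u\in[H_0^1(\cO)]^2$, expand $|\nabla(\omega u)|^2$ via Young's inequality (the paper simply fixes $\epsilon=1$ from the outset), and bound $|\nabla\omega|^2$ by a multiple of $\rho$ using the hypothesis $|\rho'|\le\sqrt{\lambda_1}\,\rho$. If anything, your chain-rule computation $|\nabla_x\omega|^2\le 2\lambda_1\rho/k^2$ is more careful than the paper's, which records only the $s$-derivative bound $|\partial_s\rho^{1/2}(s)|^2\le\tfrac{\lambda_1}{4}\rho(s)$ and uses it directly; your explicit ``large $k$'' requirement is the honest version of that step, and the paper imposes $k\ge R_0$ elsewhere anyway.
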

\begin{proof}
	First, we show that $ \rho ^{\frac{1}{2}} \left( \frac{ \vert x \vert ^2 }{ k^2 } \right) u \in \bm{V} $, that is $ \left\Vert \rho ^{\frac{1}{2}} \left( \frac{ \vert x \vert ^2 }{ k^2 } \right) u \right\Vert_{\bm{V}} < \infty $. We know that
	$$
		\abs{\nabla \rho ^{\frac{1}{2}} ( s )}^{2} = \frac{1}{4} \abs{\rho ^{- \frac{1}{2}} ( s ) \rho ' ( s )}^{2} \leq \frac{\lambda_1}{4} \rho (s) \leq \frac{\lambda_1}{4}.
		$$
	Since $ u \in \bm{V} $, it follows that
	\begin{align*}
		& \quad \int_{\cO} \rho \left( \frac{ \vert x \vert ^2 }{ k^2 } \right) \vert u \vert ^2 \d x
		+ \int_{\cO} \left\vert \nabla \left( \rho ^{\frac{1}{2}} \left( \frac{ \vert x \vert ^2 }{ k^2 } \right) u \right) \right\vert ^2 \d x \\
		& \leq \int_{\cO} \rho \left( \frac{ \vert x \vert ^2 }{ k^2 } \right) \vert u \vert ^2 \d x
		+ \int_{\cO} \left\vert \nabla \left( \rho ^{\frac{1}{2}} \left( \frac{ \vert x \vert ^2 }{ k^2 } \right) \right) u \right\vert ^2 \d x
		+ \int_{\cO}  \rho  \left( \frac{ \vert x \vert ^2 }{ k^2 } \right) \left\vert \nabla u \right\vert ^2 \d x \\
		& \leq \int_{\cO} \vert u \vert ^2 \d x
		+ \frac{\lambda_1}{4} \int_{\cO} \left\vert u \right\vert ^2 \d x
		+ \int_{\cO} \left\vert \nabla u \right\vert ^2 \d x < \infty .
	\end{align*}
	Then $ \rho ^{\frac{1}{2}} \left( \frac{ \vert x \vert ^2 }{ k^2 } \right) u \in \bm{V} $. By applying the Poincar\'{e} inequality, we get
	\begin{align*}
		\lambda_1 \int_{\cO} \rho \left( \frac{ \vert x \vert ^2 }{ k^2 } \right) \vert u \vert ^2 \d x
		& \leq \int_{\cO} \left\vert \nabla \left( \rho ^{\frac{1}{2}} \left( \frac{ \vert x \vert ^2 }{ k^2 } \right) u \right) \right\vert ^2 \d x \\
		& \leq \frac{\lambda_1}{2} \int_{\cO} \rho \left( \frac{ \vert x \vert ^2 }{ k^2 } \right) \left\vert u \right\vert ^2 \d x
		+ 2 \int_{\cO}  \rho  \left( \frac{ \vert x \vert ^2 }{ k^2 } \right) \left\vert \nabla u \right\vert ^2 \d x.
	\end{align*}
	Hence
	\begin{equation*}
		\frac{\lambda_1}{4} \int_{\cO} \rho \left( \frac{ \vert x \vert ^2 }{ k^2 } \right) \vert u \vert ^2 \d x
		\leq \int_{\cO}  \rho  \left( \frac{ \vert x \vert ^2 }{ k^2 } \right) \left\vert \nabla u \right\vert ^2 \d x,
	\end{equation*}
	which completes the proof.
\end{proof}

\begin{proof}[Proof of Lemma \ref{lem3}]
	Taking $ w = \rho \left( \frac{ \vert x \vert ^2 }{ k^2 } \right) v $ in \eqref{SNSM5}, we have
	\begin{equation}\label{ce1}
		\begin{aligned}
			& \frac{1}{2} \frac{\d}{\d t} \int_{\cO} \rho \left( \frac{ \vert x \vert ^2 }{ k^2 } \right) \vert v \vert ^2 \d x
			+ \nu \int_{\cO} A v \cdot \rho \left( \frac{ \vert x \vert ^2 }{ k^2 } \right) v \d x
			+ \int_{0}^{\infty} \mu (s) \int_{\cO} A \eta \cdot \rho \left( \frac{ \vert x \vert ^2 }{ k^2 } \right) v \d x \d s \\
			& \quad + b ( v + \varepsilon y, v + \varepsilon y, \rho \left( \frac{ \vert x \vert ^2 }{ k^2 } \right) v  )
			= \int_{\cO} \left( f(x) + \varepsilon \sigma y ( \theta_t \omega ) - \varepsilon A y ( \theta_t \omega ) \right) \cdot \rho \left( \frac{ \vert x \vert ^2 }{ k^2 } \right) v  \d x.
		\end{aligned}
	\end{equation}
	It can be deduced from the Young's inequality and Lemma \ref{poincare} that
	\begin{align*}
		K_1
		:& = \nu \int_{\cO} A v \cdot \rho \left( \frac{ \vert x \vert ^2 }{ k^2 } \right) v \d x \\
		& = \nu \int_{\cO} \vert \nabla v \vert ^2 \rho \left( \frac{ \vert x \vert ^2 }{ k^2 } \right) \d x
		+ \nu \int_{\cO} v \rho ' \left( \frac{ \vert x \vert ^2 }{ k^2 } \right) \frac{ 2 x }{ k^2 } \cdot \nabla v \d x  \\
		& \geq \frac{\nu}{2} \Vert \nabla v \Vert _{L^{2}_{\rho}}^{2}
		+ \frac{\nu \lambda_1}{8} \Vert v \Vert _{L^{2}_{\rho}}^{2} + \nu \int_{ k \leq \vert x \vert \leq \sqrt{2} k } v \rho ' \left( \frac{ \vert x \vert ^2 }{ k^2 } \right) \frac{ 2 x }{ k^2 } \cdot \nabla v \d x.
	\end{align*}
	For the last term of the above inequality, we get from the Young's inequality that
	\begin{align*}
		& \quad \nu \left\vert \int_{ k \leq \vert x \vert \leq \sqrt{2} k } v 	\rho ' \left( \frac{ \vert x \vert ^2 }{ k^2 } \right) \frac{ 2 x }{ k^2 } \cdot \nabla v \d x \right\vert \\
		& \leq \frac{2\sqrt{2}\nu}{k} \int_{ k \leq \vert x \vert \leq \sqrt{2} k } |v| \left\vert \rho ' \left( \frac{ \vert x \vert ^2 }{ k^2 } \right) \right\vert \left\vert \nabla v \right\vert \d x \\
		& \leq \frac{ 2 \sqrt{2\lambda_1} \nu }{k} \int_{ \cO } \rho \left( \frac{ \vert x \vert ^2 }{ k^2 } \right) |v| \left\vert \nabla v \right\vert \d x \\
		& \leq \frac{\sqrt{2\lambda_1} \nu }{k} \left( \Vert v \Vert ^2 + \Vert \nabla v \Vert ^2_{L^{2}_{\rho}} \right).
	\end{align*}
	Therefore, we find that
	\begin{equation}\label{K1}
		K_1
		\geq \frac{\nu}{4} \Vert \nabla v \Vert _{L^{2}_{\rho}}^{2}
		+ \frac{\nu \lambda_1}{8} \Vert v \Vert _{L^{2}_{\rho}}^{2}
		- \frac{\sqrt{2\lambda_1} \nu}{k} \Vert v \Vert ^2,
	\end{equation}
	where we have chosen $ k \geq R_{0} := 4\sqrt{2 \lambda_{1}} $.
	
	Next, for the third term of the left-hand side of \eqref{ce1}, it follows from the Young's inequality with $ \epsilon $ that
	\begin{align}\label{K2}
		& K_2 := \int_{0}^{\infty} \mu (s) \int_{\cO} A \eta \cdot \rho \left( \frac{ \vert x \vert ^2 }{ k^2 } \right) v \d x \d s \nonumber \\
		& = \int_{0}^{\infty} \mu (s) \int_{\cO} A \eta \cdot \rho \left( \frac{ \vert x \vert ^2 }{ k^2 } \right) \left( \pt_t \eta + \pt_s \eta - \varepsilon y( \theta_t \omega ) \right) \d x \d s \nonumber \\
		& = \frac{1}{2} \frac{\d}{\d t} \int_{0}^{\infty} \mu (s) \int_{ \cO } \rho \left( \frac{ \vert x \vert ^2 }{ k^2 } \right) \vert \nabla \eta \vert ^2 \d x \d s
		+ \int_{0}^{\infty} \mu (s) \int_{\cO} \nabla \eta \rho \left( \frac{ \vert x \vert ^2 }{ k^2 } \right) \cdot \pt_s \nabla \eta \d x \d s \nonumber \\
		& \quad - \varepsilon \int_{0}^{\infty} \mu (s) \int_{\cO} \nabla \eta \rho \left( \frac{ \vert x \vert ^2 }{ k^2 } \right) \cdot \nabla y( \theta_t \omega ) \d x \d s \nonumber \\
		& \qquad + \int_{0}^{\infty} \mu (s) \int_{\cO}  v \rho ' \left( \frac{ \vert x \vert ^2 }{ k^2 } \right) \frac{2x}{k^2} \cdot \nabla \eta \d x \d s \nonumber \\
		& \geq \frac{1}{2} \frac{\d}{\d t} \int_{0}^{\infty} \mu (s) \int_{ \cO } \rho \left( \frac{ \vert x \vert ^2 }{ k^2 } \right) \vert \nabla \eta \vert ^2 \d x \d s
		- \frac{1}{2} \int_{0}^{\infty} \mu ' (s) \int_{ \cO } \rho \left( \frac{ \vert x \vert ^2 }{ k^2 } \right) \vert \nabla \eta \vert ^2 \d x \d s \nonumber\\
		& \quad - \varepsilon \int_{0}^{\infty} \mu (s) \int_{\cO} \rho \left( \frac{ \vert x \vert ^2 }{ k^2 } \right) \vert \nabla \eta \vert \vert \nabla y( \theta_t \omega ) \vert \d x \d s\nonumber \\
		& \qquad
		- \frac{ \sqrt{2\lambda_1} }{k} \int_{0}^{\infty} \mu (s) \int_{\cO} \left( \vert \nabla \eta \vert ^2 + \vert v \vert ^2 \right) \d x \d s \nonumber \\
		& \geq \frac{1}{2} \frac{\d}{\d t} \int_{0}^{\infty} \mu (s) \int_{ \cO } \rho \left( \frac{ \vert x \vert ^2 }{ k^2 } \right) \vert \nabla \eta \vert ^2 \d x \d s
		+ \frac{\delta}{2} \int_{0}^{\infty} \mu (s) \int_{ \cO } \rho \left( \frac{ \vert x \vert ^2 }{ k^2 } \right) \vert \nabla \eta \vert ^2 \d x \d s \nonumber \\
		& \quad - \frac{\delta}{4} \int_{0}^{\infty} \mu (s) \int_{\cO} \rho \left( \frac{ \vert x \vert ^2 }{ k^2 } \right) \vert \nabla \eta \vert ^2 \d x \d s\nonumber \\
		& \qquad
		- \frac{\varepsilon^2}{\delta} \int_{0}^{\infty} \mu (s) \int_{\cO} \rho \left( \frac{ \vert x \vert ^2 }{ k^2 } \right) \vert \nabla y( \theta_t \omega ) \vert ^2 \d x \d s
		- \frac{ \sqrt{2\lambda_1} }{k} \norm{\eta}_{ \mathcal{M} } ^2
		- \frac{ \sqrt{2\lambda_1} \kappa }{k} \Vert v \Vert ^2 \nonumber \\
		& = \frac{1}{2} \frac{\d}{\d t} \int_{0}^{\infty} \mu (s) \int_{ \cO } \rho \left( \frac{ \vert x \vert ^2 }{ k^2 } \right) \vert \nabla \eta \vert ^2 \d x \d s
		+ \frac{\delta}{4} \int_{0}^{\infty} \mu (s) \int_{ \cO } \rho \left( \frac{ \vert x \vert ^2 }{ k^2 } \right) \vert \nabla \eta \vert ^2 \d x \d s \nonumber \\
		& \quad - \frac{\varepsilon^2 \kappa}{\delta} \Vert \nabla y( \theta_t \omega ) \Vert _{L^{2}_{\rho}}^{2}
		- \frac{ \sqrt{2\lambda_1} }{k} \norm{\eta}_{ \mathcal{M} } ^2
		- \frac{ \sqrt{2\lambda_1} \kappa }{k} \Vert v \Vert ^2.
	\end{align}
	From the definition and properties of the trilinear form $ b ( \cdot , \cdot , \cdot ) $, we obtain
	\begin{align*}
		\left \vert b( v , v , \rho \left( \frac{ \vert x \vert ^2 }{ k^2 } \right) v ) \right \vert
		& = \left \vert \int_{\cO} ( v \cdot \nabla ) v \cdot \rho \left( \frac{ \vert x \vert ^2 }{ k^2 } \right) v \d x \right \vert \\
		& \leq \left \vert \int_{ \cO } ( v \cdot \nabla ) v \cdot v \d x \right \vert \\
		& = \left \vert b ( v , v , v ) \right \vert= 0
	\end{align*}
	and
	\begin{align*}
		\left \vert b( \varepsilon y , v , \rho \left( \frac{ \vert x \vert ^2 }{ k^2 } \right) v ) \right \vert = 0,
	\end{align*}
	
	For the last term of the left-hand side of \eqref{ce1}, we know that
	\begin{align*}
		\vert K_3 \vert
		: & = \left \vert b ( v + \varepsilon y, v + \varepsilon y, \rho \left( \frac{ \vert x \vert ^2 }{ k^2 } \right) v  ) \right \vert \\
		& \leq \left \vert b( v , \varepsilon y , \rho \left( \frac{ \vert x \vert ^2 }{ k^2 } \right) v ) \right \vert + \left \vert b( \varepsilon y , \varepsilon y , \rho \left( \frac{ \vert x \vert ^2 }{ k^2 } \right) v ) \right \vert \\
		& = : L_{1} + L_{2}.
	\end{align*}
	It follows from the the Weighted H\"{o}lder's inequality and Lemma \ref{b} that
	\begin{align*}
		L_{1}
		= \left \vert \int_{\cO} ( v \cdot \nabla ) ( \varepsilon y ) \cdot \rho \left( \frac{ \vert x \vert ^2 }{ k^2 } \right) v \d x \right \vert
		& \leq c \varepsilon \Vert A^{\frac{1}{2}} v \Vert _{L^{2}_{\rho}} \Vert A^{\frac{1}{2}} y(\theta_t \omega) \Vert _{L^{2}_{\rho}} \Vert v \Vert _{L^{2}_{\rho}} \\
		& \leq c \varepsilon \Vert A^{\frac{1}{2}} v \Vert _{L^{2}_{\rho}} \Vert A^{\frac{1}{2}} y(\theta_t \omega) \Vert _{L^{2}_{\rho}} \Vert v \Vert
	\end{align*}
	and
	\begin{align*}
		L_{2}
		& = \left \vert \int_{\cO} ( \varepsilon y \cdot \nabla ) ( \varepsilon y ) \cdot \rho \left( \frac{ \vert x \vert ^2 }{ k^2 } \right) v \d x \right \vert
		\leq c \varepsilon^{2} \Vert A^{\frac{1}{2}} y(\theta_t \omega) \Vert _{L^{2}_{\rho}} \Vert y(\theta_t \omega) \Vert _{L^{2}_{\rho}} \Vert A^{\frac{1}{2}} v \Vert_{L^{2}_{\rho}}.
	\end{align*}
	Then we see that
	\begin{align}\label{K3}
		\vert K_3 \vert
		\leq \frac{ \varepsilon^{2} }{2} \Vert A^{\frac{1}{2}} y(\theta_t \omega) \Vert _{L^{2}_{\rho}} ^2 \Vert v \Vert^{2}
		+ \frac{\nu}{8} \Vert \nabla v \Vert ^2_{L^{2}_{\rho}} + c \varepsilon^{4} \Vert A^{\frac{1}{2}} y(\theta_t \omega) \Vert _{L^{2}_{\rho}}^{2} \Vert y(\theta_t \omega) \Vert _{L^{2}_{\rho}}^{2}.
	\end{align}
	
	Finally, we derive from the Young's inequality with $ \epsilon $ that
	\begin{align}\label{K4}
		K_4
		: & = \int_{\cO} \left( f(x) + \varepsilon \sigma y ( \theta_t \omega ) - \varepsilon A y ( \theta_t \omega ) \right) \cdot \rho \left( \frac{ \vert x \vert ^2 }{ k^2 } \right) v  \d x \nonumber \\
		& \leq \int_{\cO} \rho \left( \frac{ \vert x \vert ^2 }{ k^2 } \right) \left| f(x) + \varepsilon \sigma y ( \theta_t \omega ) - \varepsilon A y ( \theta_t \omega ) \right| \vert v \vert \d x \\
		& \leq \frac{\nu \lambda_1}{16} \Vert v \Vert _{L^{2}_{\rho}}^{2} + \frac{4}{\nu \lambda_1} \left( \Vert f \Vert _{L^{2}_{\rho}}^{2} + \varepsilon \sigma \Vert y ( \theta_t \omega ) \Vert _{L^{2}_{\rho}}^{2} + \varepsilon \Vert A y ( \theta_t \omega ) \Vert _{L^{2}_{\rho}}^{2} \right). \nonumber
	\end{align}
	Thus, a combination of \eqref{ce1}--\eqref{K4} implies that
	\begin{align} \label{dH}
		& \quad \frac{\d}{\d t} H(t , \omega) + \delta_2 H(t , \omega) + \frac{\nu}{4} \Vert \nabla v \Vert _{L^{2}_{\rho}}^{2}  \nonumber\\
		& \leq \frac{2 \sqrt{2\lambda_1}}{k} \left( \nu + \kappa \right) \Vert v \Vert ^2
		+ \varepsilon^{2} \Vert A^{\frac{1}{2}} y(\theta_t \omega) \Vert _{L^{2}_{\rho}} ^2 \Vert v \Vert^{2} \nonumber \\
		& \quad + \frac{ 2 \sqrt{2\lambda_1} }{k} \norm{\eta}_{ \mathcal{M} } ^2 + 2 c \varepsilon^{4} \Vert A^{\frac{1}{2}} y(\theta_t \omega) \Vert _{L^{2}_{\rho}}^{2} \Vert y(\theta_t \omega) \Vert _{L^{2}_{\rho}}^{2} \nonumber\\
		& \quad + \frac{8}{\nu \lambda_1} \left( \Vert f \Vert _{L^{2}_{\rho}}^{2} + \varepsilon \sigma \Vert y ( \theta_t \omega ) \Vert _{L^{2}_{\rho}}^{2} + \frac{\varepsilon ^2 \kappa \nu \lambda_1}{2 \delta} \Vert A ^{\frac{1}{2}} y ( \theta_t \omega ) \Vert _{L^{2}_{\rho}}^2 + \varepsilon \Vert A y ( \theta_t \omega ) \Vert _{L^{2}_{\rho}}^{2} \right),
	\end{align}
	where $ \delta_2 = \min \left\{ \frac{\nu \lambda_1}{8} , \frac{\delta}{2} \right\} $ and
	\begin{equation}\label{def_H}
		H(t , \omega) = \int_{\cO} \rho \left( \frac{ \vert x \vert ^2 }{ k^2 } \right) \vert v \vert ^2 \d x
		+ \int_{0}^{\infty} \mu (s) \int_{ \cO } \rho \left( \frac{ \vert x \vert ^2 }{ k^2 } \right) \vert \nabla \eta \vert ^2 \d x \d s.
	\end{equation}	
	Rearranging inequality \eqref{dH}, one obtains that
	\begin{equation}\label{ce2}
		\begin{aligned}
			& \quad \frac{\d}{\d t} H(t , \omega) + \delta_2 H(t , \omega) + \frac{\nu}{4} \Vert \nabla v \Vert _{L^{2}_{\rho}}^{2} \leq \frac{c}{k} \Vert \psi \Vert _{\cH}^2
			+ F(t , \theta_t \omega) \Vert \psi \Vert _{\cH}^2
			+ c G(t , \theta_t \omega),
		\end{aligned}
	\end{equation}
	where
	$$
		F(t , \theta_t \omega) = \varepsilon^{2} \Vert A^{\frac{1}{2}} y(\theta_t \omega) \Vert _{L^{2}_{\rho}} ^2,
	$$
	and
		\begin{align*}
			G(t , \theta_t \omega) & = \Vert f \Vert _{L^{2}_{\rho}}^{2} + \Vert y ( \theta_t \omega ) \Vert _{L^{2}_{\rho}}^{2} \\
			& \quad + \Vert A ^{\frac{1}{2}} y ( \theta_t \omega ) \Vert _{L^{2}_{\rho}}^2 + \Vert A y ( \theta_t \omega ) \Vert _{L^{2}_{\rho}}^{2}
			+ \Vert A ^{\frac{1}{2}} y ( \theta_t \omega ) \Vert _{L^{2}_{\rho}}^4.
		\end{align*}
	Multiplying both sides of \eqref{ce2} by $ e ^{ \delta_2 t } $ and integrating it over $ ( T_{5} , t ) $ where $ T_{5} = \max \{ T_3 , T_{4} \} $,  we get that, for all $ t \geq T_{5} $,
	\begin{align*}
		H( t , \omega )
		& \leq e ^{\delta_2 (T_{5} - t)} H( T_{5} , \omega) \\
		& \quad + \frac{c}{k} \int_{T_{5}}^{t} e ^{ \delta_2 ( s - t ) } \Vert \psi ( s , \omega , \psi_0 ( \omega ) ) \Vert _{\cH}^2 \d s \\
		& \quad + \int_{T_{5}}^{t} e ^{ \delta_2 ( s - t ) } F(s , \theta_s \omega) \Vert \psi ( s , \omega , \psi_0 ( \omega ) ) \Vert _{\cH}^2 \d s \\
		& \quad + c \int_{T_{5}}^{t} e ^{ \delta_2 ( s - t ) }  G(s , \theta_s \omega) \d s .
	\end{align*}
	Replacing $ \omega $ by $ \theta_{ - t } \omega $, we see that, for all $ t \geq T_{5} $,
	\begin{align}\label{H}
		H( t , \theta_{ - t } \omega )
		& \leq e ^{\delta_2 (T_{5} - t)} H( T_{5} , \theta_{ - t } \omega) \nonumber \\
		& \quad + \frac{c}{k} \int_{T_{5}}^{t} e ^{ \delta_2 ( s - t ) } \Vert \psi ( s , \theta_{ - t } \omega , \psi_0 ( \theta_{ - t } \omega ) ) \Vert _{\cH}^2 \d s \nonumber \\
		& \quad + \int_{T_{5}}^{t} e ^{ \delta_2 ( s - t ) } F(s , \theta_{ s - t } \omega) \Vert \psi ( s , \theta_{ - t } \omega , \psi_0 ( \theta_{ - t } \omega ) ) \Vert _{\cH}^2 \d s \nonumber \\
		& \quad + c \int_{T_{5}}^{t} e ^{ \delta_2 ( s - t ) }  G(s , \theta_{ s - t } \omega) \d s \nonumber \\
		& = : Y_1 + Y_2 + Y_3 + Y_4.
	\end{align}
	
	Next, we will estimate each terms in \eqref{H}. First, we have from  \eqref{conclusion1} and \eqref{def_H} that
	\begin{align*}
		Y_1
		& \leq e ^{\delta_2 (T_{5} - t)} \Vert \psi ( T_{5} , \theta_{ - t } \omega , \psi_0 ( \theta_{ - t } \omega ) ) \Vert _{\cH}^{2} \\
		& \leq e ^{\delta_2 (T_{5} - t)} c_1 ( r _{1} ^{ \varepsilon } ( \omega ) + 1 ).
	\end{align*}
	Thus for every given $ \zeta > 0 $, there is a $ T_{6} = T_{6} ( B , \omega , \zeta ) > T_{5} $ such that for all $ t \geq T_{6} $,
	\begin{equation}\label{n1}
		Y_1 \leq \frac{\zeta}{4}.
	\end{equation}
	Since \eqref{conclusion1} and \eqref{conclusion3} holds for all $ t \geq T_{4} $, we get that, for all $ t \geq T_{5} $,
	\begin{align*}
		Y_2
		& \leq \frac{c}{k} \int_{ T_{5} - t } ^{0} e ^{ \delta_2 s } \left[ c_1 ( r _{1} ^{ \varepsilon } ( \omega ) + 1 ) \right] \d s \\
		& \leq \frac{c}{k \delta_2} \left[ c_1 ( r _{1} ^{ \varepsilon } ( \omega ) + 1 ) \right] \left( 1 - e ^{ \delta_2 \left( T_{5} - t \right) } \right) \\
		& \leq \frac{c}{k} ( r _{1} ^{ \varepsilon } ( \omega ) + 1 ).
	\end{align*}
	Hence, there is a $ R_1 = R_1 ( \omega , \zeta ) > 0 $ such that for all $ t \geq T_{5} $ and $ k \geq R_1 \geq R_{0} $,
	\begin{equation}\label{n2}
		Y_2 \leq \frac{\zeta}{4}.
	\end{equation}
	
	Note that $ f \in \bm{H} $ and $ h(x) \in \bm{W} $, there is a $ R_{2} = R_{2} (\omega , \zeta) $ such that for all $ k \geq R_{2} $,
	\begin{equation}\label{ff}
		\Vert f(x) \Vert _{L^{2}_{\rho}}^{2} = \int_{\cO} \rho \left( \frac{ \vert x \vert ^2 }{ k^2 } \right) \vert f(x) \vert ^{2} \d x
		\leq \int_{ \vert x \vert ^2 \geq k ^2} \vert f(x) \vert ^{2} \d x
		\leq \frac{\delta_2 \zeta}{8c}
	\end{equation}
	and
	\begin{equation}\label{hh}
		\begin{aligned}
			\tilde{h} : & = \Vert h(x) \Vert _{L^{2}_{\rho}}^{2} + \Vert A^{\frac{1}{2}} h(x) \Vert _{L^{2}_{\rho}}^{2} + \Vert A h(x) \Vert _{L^{2}_{\rho}}^{2} + \Vert A^{\frac{1}{2}} h(x) \Vert _{L^{2}_{\rho}}^{4}\\
			& = \int_{\cO} \rho \left( \frac{ \vert x \vert ^2 }{ k^2 } \right) \Big( \vert h(x) \vert ^2 + \vert A^{\frac{1}{2}} h(x) \vert ^2 + \vert A h(x) \vert ^2 + \vert A^{\frac{1}{2}} h(x) \vert ^4 \Big) \d x \\
			& \leq \int_{ \vert x \vert ^2 \geq k ^2} \Big( \vert h(x) \vert ^2 + \vert A^{\frac{1}{2}} h(x) \vert ^2 + \vert A h(x) \vert ^2 + \vert A^{\frac{1}{2}} h(x) \vert ^4 \Big) \d x \\
			& \leq \min \left\{ \frac{\delta_2 \zeta}{8 \varepsilon^{2} c_{1} r(\omega) (r_{1}^{\varepsilon}(\omega) + 1)} , \frac{\delta_2 \zeta}{16 c r (\omega)} \right\},
		\end{aligned}
	\end{equation}
	where $ c $ is the constant in $ Y_{4} $.
	Therefore, by \eqref{s3}, \eqref{conclusion1} and \eqref{hh}, $ Y_3 $ is bounded by
	\begin{align*}
		Y_3
		& \leq \varepsilon ^{2} \int_{ T_{5} } ^{t} e ^{ \delta_2 ( s - t ) } \Vert A^{\frac{1}{2}} h(x) \Vert _{L^{2}_{\rho}}^{2} \abs{z ( \theta_{ s - t } \omega )} ^{4} c_{1} ( r_{1}^{\varepsilon} ( \omega ) + 1 ) \d s \\
		& \leq \frac{\delta_2 \zeta}{8 r(\omega)} \int_{ T_{5} } ^{t} e ^{ \delta_2 ( s - t ) } \vert z ( \theta_{ s - t } \omega ) \vert ^{4} \d s \\
		& = \frac{\delta_2 \zeta}{8 r(\omega)} \int_{ T_{5} - t } ^{0} e ^{ \delta_2 s } \vert z ( \theta_{ s } \omega ) \vert ^{4}\d s \\
		& \leq \frac{\delta_2 \zeta}{8 r(\omega)} \int_{ T_{5} - t } ^{0} e ^{ \frac{\delta_2}{2} s } r ( \omega ) \d s
		= \frac{\zeta}{4} \left( 1 - e ^{\frac{\delta_2}{2} ( T_{5} - t )} \right).
	\end{align*}
	Then for all $ t \geq T_{5} $,
	\begin{equation}\label{n3}
		Y_3 \leq \frac{\zeta}{4}.
	\end{equation}
	Similarly, by \eqref{s3}, \eqref{conclusion1}, \eqref{ff} and \eqref{hh}, $ Y_4 $ is bounded by
	\begin{align*}
		Y_4
		& \leq c \int_{ T_{5} } ^{t} e ^{ \delta_2 ( s - t ) } \left[ \Vert f(x) \Vert _{L^{2}_{\rho}}^{2} + \tilde{h} \beta_1 ( \theta_{ s - t } \omega ) \right] \d s \\
		& \leq \frac{\delta_2 \zeta}{8} \int_{ T_{5} } ^{t} e ^{ \delta_2 ( s - t ) } \d s
		+ \frac{\delta_2 \zeta}{16 r(\omega)} \int_{ T_{5} } ^{t} e ^{ \delta_2 ( s - t ) } \beta_1 ( \theta_{ s - t } \omega ) \d s \\
		& = \frac{\delta_2 \zeta}{8} \int_{ T_{5} - t } ^{0} e ^{ \delta_2 s } \d s
		+ \frac{\delta_2 \zeta}{16 r(\omega)} \int_{ T_{5} - t } ^{0} e ^{ \delta_2 s } \beta_1 ( \theta_{ s } \omega ) \d s \\
		& \leq \frac{\delta_2 \zeta}{8} \int_{ T_{5} - t } ^{0} e ^{ \delta_2 s } \d s
		+ \frac{\delta_2 \zeta}{16 r(\omega)} \int_{ T_{5} - t } ^{0} e ^{ \frac{\delta_2}{2} s } r ( \omega ) \d s \\
		& = \frac{\zeta}{8} \left( 1 - e ^{\delta_2 ( T_{5} - t )} \right) + \frac{\zeta}{8} \left( 1 - e ^{\frac{\delta_2}{2} ( T_{5} - t )} \right) .
	\end{align*}
	Therefore, for all $ t \geq T_{5} $,
	\begin{equation}\label{n4}
		Y_4 \leq \frac{\zeta}{4}.
	\end{equation}
	Let $ T_{7} = \max \{ T_{5} , T_{6} \} $. It follows from \eqref{H}--\eqref{n4} that, for all $ t \geq T_{7} $ and $ k \geq R_2 $,
	\begin{equation*}
		H( t , \theta_{ - t } \omega ) \leq \zeta.
	\end{equation*}
	Consequently, for all $ t \geq T_{7} $ and $ k \geq R_2 $,
	\begin{equation*}
		\Vert \psi ( t, \theta_{ - t} \omega, \psi_0( \theta_{ - t} \omega ) )(x) \Vert^2 _{ \cH ( Q_{\sqrt{2}k}^{c} ) } \leq \zeta.
	\end{equation*}
	Taking $ R_{3} = \sqrt{2} R_{2} $, we obtain \eqref{psi_zeta} which completes the proof.
\end{proof}
By using \eqref{phi}, we have next lemma.
\begin{lemma}\label{lem4}
	Suppose that $ B = \left\{ B( \omega ) \right\}_{\omega \in \Omega} \in \cD $ and $ \phi_0( \omega ) \in B( \omega ) $. Then for every $ \zeta > 0 $ and $ \bP $-$ a.e. $ $ \omega \in \Omega $, there exist a $ T_{7} = T_{7} ( B , \omega , \zeta ) > 0 $ assigned in Lemma \ref{lem3} and a $ R_{5} = R_{5} ( \omega , \zeta ) $ such that for all $ t \geq T_{7} $,
	\begin{equation}\label{phi_zeta}
		\Vert \phi ( t, \theta_{ - t} \omega, \phi_0( \theta_{ - t} \omega ) )(x) \Vert^2 _{ \cH ( Q_{R_{5}}^{c} ) } \leq \zeta.
	\end{equation}
\end{lemma}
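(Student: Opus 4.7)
The plan is to reduce Lemma \ref{lem4} directly to Lemma \ref{lem3} via the decomposition
\[
\phi(t,\theta_{-t}\omega,\phi_0(\theta_{-t}\omega)) = \psi(t,\theta_{-t}\omega,\psi_0(\theta_{-t}\omega)) + \tran{\varepsilon y(\omega)}{0}
\]
obtained from \eqref{phi}. Before invoking Lemma \ref{lem3}, I would check that its hypotheses are met: since $\phi_0(\omega)\in B(\omega)\in\cD$ is tempered and $y(\omega)$ is tempered, the translate $\psi_0(\omega)=\phi_0(\omega)-\tran{\varepsilon y(\omega)}{0}$ again lies in a tempered random set, exactly as verified in the proof of Lemma \ref{lem1}.

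Next I would use the elementary inequality $\Vert a+b\Vert^2\leq 2\Vert a\Vert^2+2\Vert b\Vert^2$ restricted to $Q_{R_5}^{c}$, together with the observation that the memory components of $\phi$ and $\psi$ coincide, to obtain
\[
\Vert \phi(t,\theta_{-t}\omega,\phi_0(\theta_{-t}\omega))\Vert^{2}_{\cH(Q_{R_5}^{c})}\leq 2\Vert \psi(t,\theta_{-t}\omega,\psi_0(\theta_{-t}\omega))\Vert^{2}_{\cH(Q_{R_5}^{c})}+2\varepsilon^{2}\Vert y(\omega)\Vert^{2}_{\bH(Q_{R_5}^{c})}.
\]
Lemma \ref{lem3} applied with tolerance $\zeta/4$ then supplies the time $T_7$ and a radius $R_3=R_3(\omega,\zeta)$ making the first term on the right at most $\zeta/2$ for all $t\geq T_7$.

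For the remaining piece, I write $y(\omega)=h(x)z(\omega)$ with $h\in\bm{W}\subset\bH$, so that $\Vert y(\omega)\Vert^{2}_{\bH(Q_R^{c})}=|z(\omega)|^{2}\Vert h\Vert^{2}_{\bH(Q_R^{c})}$. Since $h\in\bH$, absolute continuity of the Lebesgue integral yields a radius $R_4=R_4(\omega,\zeta)$ with $2\varepsilon^{2}|z(\omega)|^{2}\Vert h\Vert^{2}_{\bH(Q_{R_4}^{c})}\leq \zeta/2$. Crucially, this correction is time-independent, so a single radius controls it uniformly in $t$. Taking $R_5=\max\{R_3,R_4\}$ then yields \eqref{phi_zeta}.

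I do not anticipate any serious obstacle: this lemma is essentially a corollary of Lemma \ref{lem3}, and all the genuinely hard work (the weighted cutoff energy estimates, the generalized Poincar\'{e} inequality from Lemma \ref{poincare}, the handling of the memory term, and the exponential decay of the Ornstein--Uhlenbeck driver) has already been completed there. The only new observation needed is that the drift $\varepsilon y(\omega)$ does not depend on $t$, so its far-field tail is controlled once and for all by a single choice of $R$.
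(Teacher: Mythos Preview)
Your proposal is correct and follows essentially the same approach as the paper: decompose $\phi=\psi+\tran{\varepsilon y(\omega)}{0}$ via \eqref{phi}, apply Lemma \ref{lem3} to the $\psi$-piece, and kill the time-independent tail of $y(\omega)=h(x)z(\omega)$ by choosing a large enough radius $R_4$, then set $R_5=\max\{R_3,R_4\}$. The paper is slightly less careful with constants (it ends with a bound of $3\zeta$ rather than $\zeta$), whereas you pre-scale the tolerance to land exactly on $\zeta$; both are fine since $\zeta$ is arbitrary.
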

\begin{proof}
	It can be deduced from Lemma \ref{lem3} and \eqref{phi} that for all $ t \geq T_{7} $,
	\begin{align*}
		& \quad \Vert \phi ( t, \theta_{ - t} \omega, \phi_0( \theta_{ - t} \omega ) ) \Vert^2 _{ \cH ( Q_{R_{3}}^{c} ) } \\
		& \leq 2 \Vert \psi ( t, \theta_{ - t} \omega, \psi_0( \theta_{ - t} \omega ) ) \Vert^2 _{ \cH ( Q_{R_{3}}^{c} ) }
		+ \int_{\left\vert x \right\vert \geq R_{3}} 2 \varepsilon^2 \vert y ( \omega ) \vert^2 \d x .
	\end{align*}
	Indeed, since $ y( \omega ) = h(x) z( \omega ) $ and $ h(x) \in \bm{W} $, there is a $ R_4 = R_4 ( \omega , \zeta ) > 0 $ such that
	\begin{equation*}
		\int_{\left\vert x \right\vert \geq R_4} 2 \varepsilon^2 \vert y ( \omega ) \vert^2 \d x
		\leq \zeta.
	\end{equation*}
	Hence, there exists a $ R_5 = \max \left\{ R_3 , R_4 \right\} $ such that for all $ t \geq T_{7} $,
	\begin{equation*}
		\Vert \phi ( t, \theta_{ - t} \omega, \phi_0( \theta_{ - t} \omega ) ) \Vert^2 _{ \cH ( Q_{R_{5}}^{c} ) }
		\leq 3\zeta.
	\end{equation*}
	The proof is complete.
\end{proof}
\subsection{\texorpdfstring{$ \cD $}{\mathcal{D}}--pullback asymptotic compactness}
In this section, we prove the existence and uniqueness of $ \cD $-random attractor for the random dynamical system $ \Phi $ corresponding to the stochastic Navier-Stokes equation with memory in unbounded domains. To overcome the difficulty caused by the lack of compactness of $ \mathcal{M}^{1} \hookrightarrow \mathcal{M} $ (see e.g., \cite{PataZ2001}), we construct a new compact subspace as in \cite{Liu2017a}.


First, we give a lemma on producing a compact subspace $ \mathcal{N} \subset \mathcal{M} $.
\begin{lemma}\label{N}
	Denote by
	$$
		\mathcal{N}
		=
		\bigcup_{ \eta_0 \in K( \theta_{ - t } \omega ) }
		\bigcup_{ t \geq T_{7} }
		\bigcup_{ \omega \in \Omega }
		\eta^{t} ( \theta_{ - t } \omega , \eta_0 ),
	$$
	where $ \left\{ K( \omega ) \right\}_{ \omega \in \Omega } $ is defined by \eqref{K_omega} and $ T_{7} $ is defined in Lemma \ref{lem3}.
	Then $ \mathcal{N} $ is relatively compact in $ \mathcal{M} $.
\end{lemma}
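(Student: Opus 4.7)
The plan is to apply a Pata--Zucchi compactness criterion for memory spaces (\cite{PataZ2001}) to the set $\mathcal{N}$, combined with the solution-splitting of Lemmas \ref{decompose} and \ref{lem2} and the far-field control of Lemmas \ref{lem3}--\ref{lem4}. Decompose each element of $\mathcal{N}$ as $\eta^{t} = \eta_{L}^{t} + \eta_{N}^{t}$ according to \eqref{linear}--\eqref{nonlinear}. By Lemma \ref{decompose}, $\|\eta_{L}^{t}\|_{\mathcal{M}}$ decays exponentially to zero, so up to an arbitrarily small error in $\mathcal{M}$ it suffices to show that the family $\{\eta_{N}^{t}\}$ is relatively compact in $\mathcal{M}$; by Lemma \ref{lem2} this family is uniformly bounded in the more regular space $\mathcal{M}^{1}$.

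To verify the Pata--Zucchi criterion, boundedness of $\mathcal{N}$ in $\mathcal{M}$ is immediate from Lemma \ref{lem1} and Corollary \ref{rem1}. Next I will prove the tail estimate $\sup_{\eta\in\mathcal{N}}\int_{R}^{\infty}\mu(s)\|\eta(s)\|_{1}^{2}\,\d s\to 0$ as $R\to\infty$; this follows from the representation $\eta^{t}(s)=\int_{0}^{s}u(t-\tau)\,\d\tau$, the regularity supplied by Theorem \ref{wellp}, and the exponential decay $\mu(s)\le\mu(0^{+})e^{-\delta s}$ provided by the Dafermos condition \eqref{mu}. The corresponding smallness as $s\to 0^{+}$ follows from $\eta^{t}(0)=0$ and $\partial_{s}\eta^{t}(s)=u(t-s)$: Cauchy--Schwarz yields $\|\eta^{t}(s)\|_{1}^{2}\le s\int_{t-s}^{t}\|u(\sigma)\|_{1}^{2}\,\d\sigma$, which is uniformly $o(1)$ as $s\to 0^{+}$ thanks to the local-in-time $\bm{V}$-regularity of $u$.

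The remaining and most delicate ingredient is pointwise-in-$s$ precompactness and equicontinuity in $s$ on each compact interval $[\tau,1/\tau]$. Since $\cO$ is unbounded, the compact embedding $\bm{W}\hookrightarrow\hookrightarrow\bm{V}$ is unavailable globally; instead, for any $\zeta>0$, Lemmas \ref{lem3}--\ref{lem4} produce $R=R(\zeta,\omega)$ such that $\|\eta^{t}\|_{\mathcal{M}(Q_{R}^{c})}<\zeta$, reducing the problem to compactness on the bounded ball $Q_{R}$. There, the classical Rellich embedding upgrades the $\mathcal{M}^{1}$-bound on $\eta_{N}^{t}$ into precompactness of $\eta_{N}^{t}(s)$ in $\bm{V}(Q_{R})$ at each fixed $s$, while equicontinuity in $s$ follows from the uniform $\bm{H}$-bound on $\partial_{s}\eta^{t}(s)=u(t-s)$. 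A Fr\'echet--Kolmogorov / Arzel\`a--Ascoli argument then delivers the required relative compactness of $\mathcal{N}$ in $\mathcal{M}$.

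The main obstacle is precisely the failure of $\bm{V}\hookrightarrow\bm{H}$ (and consequently of $\mathcal{M}^{1}\hookrightarrow\mathcal{M}$) to be compact on the unbounded domain $\cO$, as flagged in item (ii) of the introduction. My strategy resolves this via the spatial cutoff $\cO=Q_{R}\cup Q_{R}^{c}$: the $Q_{R}^{c}$ piece is made small using the far-field estimates of Lemmas \ref{lem3}--\ref{lem4}, while the $Q_{R}$ piece inherits compactness from the classical Rellich embedding together with the $\mathcal{M}^{1}$-bound of Lemma \ref{lem2}. The component $\eta_{L}^{t}$ is handled as a separate, exponentially small error via Lemma \ref{decompose}, and a diagonal selection argument over shrinking $\zeta$ then yields the conclusion.
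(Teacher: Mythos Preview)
The paper does not actually prove this lemma; it simply refers to \cite{GP2000,Liu2017a}. Your proposal fleshes out exactly the kind of argument those references contain, adapted to the unbounded-domain setting: the Pata--Zucchi tail/compactness criterion in the $s$-variable, the splitting $\eta^{t}=\eta_{L}^{t}+\eta_{N}^{t}$ with the $\mathcal{M}^{1}$-bound of Lemma~\ref{lem2} on $\eta_{N}^{t}$, and the spatial far-field control of Lemmas~\ref{lem3}--\ref{lem4} to recover Rellich compactness on bounded balls $Q_{R}$. In that sense your strategy is precisely what the paper intends and what the cited references do.

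Two caveats are worth recording. First, the representation $\eta^{t}(s)=\int_{0}^{s}u(t-\tau)\,\d\tau$ you invoke for the tail and small-$s$ estimates is valid only for $s\le t$; for $s>t$ the solution of the transport equation $\partial_{t}\eta=T\eta+u$ reads $\eta^{t}(s)=\eta_{0}(s-t)+\int_{0}^{t}u(t-\tau)\,\d\tau$, and the initial history $\eta_{0}$ reappears. This is harmless for $\eta_{N}^{t}$ because $\eta_{0N}=0$, so after splitting your argument goes through, but you should state and use the correct two-branch formula. Second, and more structurally, the set $\mathcal{N}$ as written is a union over \emph{all} $\omega\in\Omega$, whereas every quantitative input you use---the absorbing radius $c_{2}(r_{2}^{\varepsilon}(\omega)+1)$, the far-field cutoff $R_{5}(\omega,\zeta)$, the $\mathcal{M}^{1}$-bound $c_{4}(r_{4}^{\varepsilon}(\omega)+1)$---depends on $\omega$ through tempered random variables that are not uniformly bounded over $\Omega$. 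Taken literally the statement cannot hold; what is actually needed in the proof of Lemma~\ref{compactness} is relative compactness of the $\omega$-section $\mathcal{N}_{\omega}$ for $\bP$-a.e.\ fixed $\omega$, and that is what your argument establishes. Make this explicit and prove the $\omega$-wise version.
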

\begin{proof}
	See \cite{GP2000,Liu2017a}.
\end{proof}

Next, we follow the procedure in \cite{BLW2009} with Lemma \ref{lem2}, Lemma \ref{lem4} and Lemma \ref{N} to get the $ \cD $-pullback asymptotic compactness of $ \Phi $.
\begin{lemma}\label{compactness}
	The random dynamical system $ \Phi $ is $ \cD $-pullback asymptotically compact in $ \cH (\cO) $; that is, for $ \bP $-a.e. $ \omega \in \Omega $, the sequence $ \left\{ \Phi ( t_{n} , \theta_{ - t_{n} } \omega ) \phi_{0,n} ( \theta_{ - t_{n} } \omega ) \right\} $ has a convergent subsequence in $ \cH ( \cO ) $ provided $ t_{n} \rightarrow \infty $, $ B = \left\{ B ( \omega ) \right\}_{ \omega \in \Omega } \subset \cD $ and $ \phi_{0,n} ( \theta_{ - t_{n} } \omega ) \in B ( \theta_{ - t_{n} } \omega ) $.
\end{lemma}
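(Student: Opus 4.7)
The strategy is the classical Bates--Lu--Wang decomposition into a tail estimate plus interior compactness, adapted to the memory framework by means of the splitting $\phi = \phi_L + \phi_N$ and the compact subspace $\mathcal{N}$ of Lemma \ref{N}. First, by Corollary \ref{rem1} the set $\{K(\omega)\}_{\omega\in\Omega}$ is a random absorbing set in $\cD$, so for each fixed $\omega$ there exists $N_0$ such that $\Phi(t_n,\theta_{-t_n}\omega)\phi_{0,n}(\theta_{-t_n}\omega) \in K(\omega)$ for $n\geq N_0$. Fix $\zeta>0$. By Lemma \ref{lem4} there exist $R=R_5(\omega,\zeta)$ and $N_1\geq N_0$ so that the $\cH$-norm of $\phi(t_n,\theta_{-t_n}\omega,\phi_{0,n}(\theta_{-t_n}\omega))$ restricted to $Q_R^c$ is smaller than $\zeta$ for all $n\geq N_1$. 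It therefore suffices to extract a Cauchy subsequence of the restrictions to the bounded ball $Q_R$.

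Next, I will exploit the splitting $\phi = \phi_L + \phi_N$ introduced in \eqref{linear}--\eqref{nonlinear}. By Lemma \ref{decompose}, $\|\phi_L(t_n,\theta_{-t_n}\omega,\phi_{0L,n}(\theta_{-t_n}\omega))\|_{\cH}^2 \leq e^{-2\delta_0 t_n}\|\phi_{0L,n}(\theta_{-t_n}\omega)\|_{\cH}^2$, and since $\phi_{0L,n}(\theta_{-t_n}\omega)\in B(\theta_{-t_n}\omega)$ is tempered, the right-hand side tends to $0$ as $n\to\infty$; hence $\phi_L$ can be made arbitrarily small in $\cH(Q_R)$. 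For the nonlinear part, Lemma \ref{lem2} yields a tempered bound
\[
\|\phi_N(t_n,\theta_{-t_n}\omega,\phi_{0N,n}(\theta_{-t_n}\omega))\|_{\cH^1}^2 \leq c_4(r_4^\varepsilon(\omega)+1),\qquad n\geq N_2,
\]
uniformly in $n$. Writing $\phi_N = \tran{v_N}{\eta_N}$, this means that $v_N$ is bounded in $\bm V$ and $\eta_N$ is bounded in $\mathcal{M}^{1}$.

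The interior compactness is then extracted as follows. For the velocity component, the classical compact embedding $\bm V(Q_R)\hookrightarrow\bm H(Q_R)$ (Rellich--Kondrachov on a bounded domain) gives a subsequence of $v_N$ converging in $\bm H(Q_R)$. For the memory component the embedding $\mathcal{M}^{1}\hookrightarrow\mathcal{M}$ fails to be compact in general, and this is the main obstacle; here I invoke Lemma \ref{N}, which furnishes a relatively compact set $\mathcal N\subset\mathcal M$ containing all admissible histories $\eta^t(\theta_{-t}\omega,\eta_0)$ with $\eta_0$ in the absorbing set and $t\geq T_7$. Since the history part of $\phi_N$ differs from that of $\phi$ only by the exponentially decaying term from $\phi_L$, it sits (up to an arbitrarily small error) inside $\mathcal N$ for $n$ large, and hence a further subsequence converges in $\mathcal M$. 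A diagonal extraction combines these two convergences into a subsequence Cauchy in $\cH(Q_R)$.

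Finally I assemble the estimates. Given any $\zeta>0$, fixing $R=R_5(\omega,\zeta)$ gives the tail bound. Choosing $n$ large enough makes $\phi_L$ negligible in $\cH$ by Lemma \ref{decompose}, and the subsequence constructed above makes $\phi_N$ Cauchy in $\cH(Q_R)$. A triangle inequality combining the tail estimate, the exponential decay of $\phi_L$, and the Cauchy property of $\phi_N$ on $Q_R$ shows that the extracted subsequence of $\{\Phi(t_n,\theta_{-t_n}\omega)\phi_{0,n}(\theta_{-t_n}\omega)\}$ is Cauchy in $\cH(\cO)$; a standard diagonal argument over a sequence $\zeta_k\to 0$ produces a subsequence converging in $\cH(\cO)$. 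The main delicate point to watch in the write-up is the memory compactness: one must verify that the histories generated by the trajectories considered indeed lie (asymptotically) in the set $\mathcal N$ of Lemma \ref{N}, which is what allows bypassing the failure of $\mathcal{M}^1\hookrightarrow\mathcal{M}$ on the unbounded domain.
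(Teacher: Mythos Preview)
Your proposal is correct and follows essentially the same route as the paper: the Bates--Lu--Wang tail estimate (Lemma \ref{lem4}), the splitting $\phi=\phi_L+\phi_N$ with exponential decay of $\phi_L$ (Lemma \ref{decompose}) and the $\cH^1$-bound on $\phi_N$ (Lemma \ref{lem2}), together with the compact set $\overline{\mathcal N}\subset\mathcal M$ of Lemma \ref{N} to recover compactness of the memory component on the bounded ball $Q_R$. The only cosmetic differences are that the paper first extracts a weak limit $\hat\phi$ and then upgrades to strong convergence (rather than your Cauchy/diagonal argument), and that the paper packages the interior compactness by introducing $\widetilde{\cH}=\bm V\times(\mathcal M^1\cap\overline{\mathcal N})$ and asserting $\phi_N\in\widetilde{\cH}$ directly, whereas you reach the same conclusion by writing $\eta_N=\eta-\eta_L$ with $\eta\in\mathcal N$ and $\eta_L\to 0$; your phrasing of this last point is in fact slightly more transparent.
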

\begin{proof}
	 For $ t_{n} \rightarrow \infty $, $ B = \left\{ B ( \omega ) \right\}_{ \omega \in \Omega } \in \cD $ and $ \phi_{0,n} ( \theta_{ - t_{n} } \omega ) \in B ( \theta_{ - t_{n} } \omega ) $,
	 from Lemma \ref{lem1}, we have that
	\begin{equation*}
		\left\{ \Phi ( t_{n} , \theta_{ - t_{n} } \omega ) \phi_{0,n} ( \theta_{ - t_{n} } \omega ) \right\}_{ n = 1 }^{ \infty } \mbox{ is bounded in } \cH ( \cO ).
	\end{equation*}
	Then there exists $ \hat{\phi} \in \cH ( \cO ) $ such that, up to a subsequence,
	\begin{equation}\label{weakcon}
		\Phi ( t_{n} , \theta_{ - t_{n} } \omega ) \phi_{0,n} ( \theta_{ - t_{n} } \omega ) \rightarrow \hat{\phi} \quad \mbox{ weakly in } \cH ( \cO ).
	\end{equation}
	
	In what follows, we prove that the weak convergence in \eqref{weakcon} is actually strong convergence, for which we need to prove
	\begin{equation*}
		\Vert \phi ( t_{n} , \theta_{ - t_{n} } \omega, \phi_{ 0, n }( \theta_{ - t_{n} } \omega ) ) - \hat{\phi} \Vert _{\cH ( \cO )}^2
		\leq \zeta,
	\end{equation*}
	for a given $ \zeta > 0 $. Lemma \ref{lem4} implies that there exist a $ T_{7} = T_{7} ( B , \omega , \zeta ) $ and a $ R_{5} = R_{5} ( \omega , \zeta ) $, such that for all $ t \geq T_{7} $,
	\begin{equation}\label{phi_R3}
		\Vert \phi ( t, \theta_{ - t} \omega, \phi_0( \theta_{ - t} \omega ) ) \Vert^2 _{ \cH ( Q_{R_{5}}^{c} ) } \leq \frac{\zeta}{8}.
	\end{equation}
	Since $ t_{n} \rightarrow \infty $, there is a $ N_{1} = N_{1} ( B , \omega , \zeta ) \in \mathbb{N} $ such that for all $ n \geq N_{1} $, $ t_{n} \geq T_{7} $, it can be deduced from \eqref{phi_R3} that
	\begin{equation}\label{phi_R3n}
		\Vert \phi ( t_{n} , \theta_{ - t_{n} } \omega, \phi_{ 0, n }( \theta_{ - t_{n} } \omega ) ) \Vert^2 _{ \cH ( Q_{R_{5}}^{c} ) } \leq \frac{\zeta}{8}.
	\end{equation}
	Notice that $ \hat{\phi} \in \cH ( \cO ) $ and indeed there exists a $ R_{6} = R_{6} ( \zeta ) > 0$ such that
	\begin{equation}\label{varphi}
	\Vert \hat{\phi} (x) \Vert^2 _{ \cH ( Q_{R_{6}}^{c} ) } \leq \frac{\zeta}{8}.
	\end{equation}
	
	It remains to be done with the problem that $ \mathcal{M}^1 ( Q_{R} ) \hookrightarrow \mathcal{M} ( Q_{R} ) $ is not compact. Let $ R_{7} = \max \left\{ R_{5} , R_{6} \right\} $, by Lemma \ref{N}, we know that $ \overline{\mathcal{N}} $ is compact in $ \mathcal{M} $ where $ \overline{\mathcal{N}} $ is the closure of $ \mathcal{N} $.
	Define $ \widetilde{\cH} : = \bm{V} \times ( \mathcal{M}^{1} \cap \overline{\mathcal{N}} ) \subset \cH^{1} \subset \cH $, then it follows from \eqref{conclusion4} that for all $ t \geq T_{7} $,
	\begin{equation*}
		\Vert \phiN ( t , \theta_{ - t } \omega , \phi_{0N} ( \theta_{ - t } \omega ) ) \Vert _{ \widetilde{\cH} ( \cO ) } ^2
		\leq c_4 (r_{4}^{\varepsilon}(\omega) + 1).
	\end{equation*}
	Hence, there is a $ N_{2} = N_{2} ( B , \omega) \in \mathbb{N} $ large enough such that for all $ n \geq N_{2} $, $ t_{n} \geq T_{7} $,
	\begin{equation*}
		\Vert \phiN ( t_{n} , \theta_{ - t_{n} } \omega , \phi_{0N,n} ( \theta_{ - t_{n} } \omega ) ) \Vert _{ \widetilde{\cH} ( \cO ) } ^2
		\leq c_4 (r_{4}^{\varepsilon}(\omega) + 1).
	\end{equation*}
	With compact embedding $ \bm{V} ( Q_{R_{7}} ) \hookrightarrow \bm{H} ( Q_{R_{7}} ) $ and $ ( \mathcal{M}^{1} ( Q_{R_{7}} ) \cap \overline{\mathcal{N}} ( Q_{R_{7}} ) ) \subset \overline{\mathcal{N}} ( Q_{R_{7}} ) \hookrightarrow \mathcal{M} ( Q_{R_{7}} ) $, we know that
	$ \widetilde{\cH} ( Q_{R_{7}} ) $ is compact in $ \cH ( Q_{R_{7}} ) $.
	Consequently, up to a subsequence, we get that
	\begin{equation*}
		\phiN ( t_{n} , \theta_{ - t_{n} } \omega ) \phi_{0N,n} ( \theta_{ - t_{n} } \omega ) \rightarrow \hat{\phi} \quad \mbox{ strongly in } \cH ( Q_{R_{7}} ),
	\end{equation*}
	which indicates that for a given $ \zeta > 0 $, there exists a $ N_{3} = N_{3} ( B , \omega , \zeta ) \in \mathbb{N} $ such that for all $ n \geq N_{3} $,
	\begin{equation}\label{phi_varphi}
		\Vert \phiN ( t_{n} , \theta_{ - t_{n} } \omega, \phi_{ 0N, n }( \theta_{ - t_{n} } \omega ) ) - \hat{\phi} \Vert _{\cH ( Q_{R_{7}} )}^2 \leq \frac{\zeta}{8}.
	\end{equation}
	It can be deduced from the Lemma \ref{phiL} that for a given $ \zeta > 0 $, there are a $ N_{4} ( B , \omega , \zeta ) \in \mathbb{N} $ sufficiently large and a $ T_{8} > T_{7} $ such that for $ n \geq N_{4} $, we have $ t_{n} > T_{8} $ and
	\begin{equation}\label{phiL2}
		\norm{\phiL ( t_{n} , \theta_{ - t_{n} } \omega, \phi_{ 0L, n }( \theta_{ - t_{n} } \omega ) )}_{ \cH ( Q_{R_{7}} ) }^{2} \leq \frac{\zeta}{8}.
	\end{equation}
	
	Set $ N_{5} = \max \left\{ N_{1} , N_{3} , N_{4} \right\} $, it follows from \eqref{phi_R3n} -- \eqref{phiL2} that for all $ n \geq N_{5} $, $ t_{n} \geq T_{8} $,
	\begin{align*}
		& \Vert \phi ( t_{n} , \theta_{ - t_{n} } \omega, \phi_{ 0, n }( \theta_{ - t_{n} } \omega ) ) - \hat{\phi} \Vert _{\cH ( \cO )}^2\\
		& \quad \leq \Vert \phi ( t_{n} , \theta_{ - t_{n} } \omega, \phi_{ 0, n }( \theta_{ - t_{n} } \omega ) ) - \hat{\phi} \Vert^2 _{ \cH ( Q_{R_{7}} ) } \\
		& \quad \quad + \Vert \phi ( t_{n} , \theta_{ - t_{n} } \omega, \phi_{ 0, n }( \theta_{ - t_{n} } \omega ) ) - \hat{\phi} \Vert^2 _{ \cH ( Q_{R_{7}}^{c} ) } \\
		& \quad \leq 2 \Vert \phiN ( t_{n} , \theta_{ - t_{n} } \omega, \phi_{ 0N, n }( \theta_{ - t_{n} } \omega ) ) - \hat{\phi} \Vert^2 _{ \cH ( Q_{R_{7}} ) } \\
		& \quad \quad + 2 \norm{\phiL ( t_{n} , \theta_{ - t_{n} } \omega, \phi_{ 0L, n }( \theta_{ - t_{n} } \omega ) )}_{ \cH ( Q_{R_{7}} ) }^{2} \\
		& \quad \quad + 2 \Vert \phi ( t_{n} , \theta_{ - t_{n} } \omega, \phi_{ 0, n }( \theta_{ - t_{n} } \omega ) ) \Vert^2 _{\cH ( Q_{R_{7}}^{c} )} \\
		& \quad \quad + 2 \Vert \hat{\phi} \Vert^2 _{ \cH ( Q_{R_{7}}^{c} ) } \\
		& \quad \leq \zeta.
	\end{align*}
	This completes the proof.
\end{proof}
Since \eqref{K_omega} implies a closed random absorbing set $ \left\{ K ( \omega ) \right\} _{ \omega \in \Omega } $ for $ \Phi $, and $ \Phi $ is $ \cD $-pullback asymptotically compact in $ \cH (\cO) $ from Lemma \ref{compactness}, we immediately get the following theorem by Proposition \ref{attractors}.
\begin{theorem}
	The random dynamical system $ \Phi $ has a unique $ \cD $-random attractor in $ \cH ( \cO ) $.
\end{theorem}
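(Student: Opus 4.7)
The plan is to invoke Proposition \ref{attractors} directly, since the substantive analytical work has already been carried out in the preceding lemmas. I would organize the argument around verifying the three hypotheses of that proposition in turn: (i) inclusion-closedness of the collection $\cD$, (ii) existence of a closed random absorbing set in $\cD$, and (iii) $\cD$-pullback asymptotic compactness of $\Phi$ on $\cH(\cO)$.

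First I would remark that $\cD$, as defined in Section \ref{random} to consist of tempered random subsets of $\cH(\cO)$, is inclusion-closed. This is a standard fact: if $F = \{F(\omega)\}_{\omega \in \Omega} \in \cD$ and $E(\omega) \subset F(\omega)$ for every $\omega$, then $d(E(\omega)) \leq d(F(\omega))$, so the temperedness condition $\lim_{t \to \infty} e^{-\mu t} d(E(\theta_{-t}\omega)) = 0$ for all $\mu > 0$ is inherited from $F$. Next, Corollary \ref{rem1} provides the closed random absorbing set
\[
K(\omega) = \left\{ \phi \in \cH(\cO) : \Vert \phi \Vert ^2 _{\cH} \leq c_2 (r _{2} ^{ \varepsilon } ( \omega ) + 1) \right\},
\]
which is tempered (since $r_2^\varepsilon$ is a tempered random variable, as verified in Lemma \ref{lem1}) and pullback-absorbs every $B \in \cD$; closedness is immediate from the definition of $K(\omega)$ as the closed ball of the stated radius in the Hilbert space $\cH(\cO)$.

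The third and only nontrivial ingredient is the $\cD$-pullback asymptotic compactness of $\Phi$, but this is precisely the content of Lemma \ref{compactness}. That lemma was established by combining the solution splitting $\phi = \phiL + \phiN$, the exponential decay of $\phiL$ (Lemma \ref{decompose}/Remark \ref{phiL}), the higher-order estimate for $\phiN$ in $\cH^1$ (Lemma \ref{lem2}), the far-field tail estimate for $\phi$ (Lemma \ref{lem4}), and the compact embedding $\widetilde{\cH}(Q_R) \hookrightarrow \cH(Q_R)$ obtained via the auxiliary compact subspace $\overline{\mathcal{N}} \subset \cM$ (Lemma \ref{N}). These three ingredients together verify the hypotheses of Proposition \ref{attractors}, which then yields the existence and uniqueness of a $\cD$-random attractor $\{\cA(\omega)\}_{\omega \in \Omega}$ in $\cH(\cO)$, explicitly represented by
\[
\cA(\omega) = \bigcap_{\tau \geq 0} \overline{ \bigcup_{t \geq \tau} \Phi(t, \theta_{-t}\omega) K(\theta_{-t}\omega) }.
\]

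Since every hard step (the far-field tail control in the absence of an Ekman term, and the recovery of compactness in the memory variable via the subspace $\overline{\mathcal{N}}$) has already been done, the proof itself is essentially a single-line citation of Proposition \ref{attractors}. The main obstacle, if any remained, would be to double-check measurability of $\omega \mapsto d(x, \cA(\omega))$, but this is built into the conclusion of Proposition \ref{attractors} and does not need a separate argument here.
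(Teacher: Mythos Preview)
Your proposal is correct and follows essentially the same approach as the paper: the paper simply notes that Corollary \ref{rem1} gives the closed random absorbing set and Lemma \ref{compactness} gives the $\cD$-pullback asymptotic compactness, and then invokes Proposition \ref{attractors} directly. Your added remarks on inclusion-closedness of $\cD$ and measurability are fine elaborations but not needed beyond what the paper already assumes.
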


\section{Upper semicontinuity of random attractors}\label{upper}
In this section, we prove the upper semicontinuity of random attractors for Navier-Stokes equations with memory in unbounded domains by Proposition \ref{upperc} with the constructed compact embedding and the unifom estimates above. When $ \varepsilon \rightarrow 0 $, the limiting deterministic system of \eqref{SNSM4} is
\begin{equation}\label{DNSM}
	\left\{
	\begin{aligned}
		& \pt_t u + \nu Au + \int_{0}^{\infty} \mu (s) A \eta (s)\d s + B( u , u ) = f, \\
		& \pt_t \eta = T \eta + u,\\
		& u(0) = u_0,\ \eta^0 = \eta_0.
	\end{aligned}
	\right.
\end{equation}

\begin{lemma}\label{lem5}
	For a given $ 0 < \varepsilon \leq 1 $, let $ \psi^{\varepsilon} = ( v^{\varepsilon} , \eta^{\varepsilon} )^{\tau} $ and $ \phi = ( u , \eta )^{\tau} $ be the solution of \eqref{SNSM4} and \eqref{DNSM} with initial conditions $ \psi_{0}^{\varepsilon} = ( v^{\varepsilon}_{0} , \eta^{\varepsilon}_{0} )^{\tau} $ and $ \phi_{0} = ( u_{0} , \eta_{0} )^{\tau} $, respectively. Then for $ \bP $-a.e. $ \omega \in \Omega $ and $ t \geq T_{4} $, we have
	$$
		\begin{aligned}
		\Vert \psi^{\varepsilon} ( t , \omega , \psi_{0}^{ \varepsilon } ) -  \phi ( t , \omega , \phi_{0} ) \Vert _{\cH} ^{2}
		& \leq c e ^{ c t } \Vert \psi_{0}^{ \varepsilon } - \phi_{0} \Vert _{\cH} ^{2} + c \varepsilon^{2} e^{ c t } r(\omega) ( 1 + r_{1}^{\varepsilon} (\omega) + r_{3}^{\varepsilon} (\omega) ),
		\end{aligned}
	$$
	where $ c > 0 $ is independent of $ \varepsilon $.
\end{lemma}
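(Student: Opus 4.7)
The strategy is a continuous-dependence argument in the spirit of the uniqueness proof in Theorem \ref{wellp}. Set $w := v^{\varepsilon} - u$ and $\xi := \eta^{\varepsilon} - \eta$ (slightly overloading the symbol $\eta$ for the deterministic history); subtracting \eqref{DNSM} from \eqref{SNSM4} gives the difference system
\begin{equation*}
\left\{
\begin{aligned}
& w_t + \nu Aw + \int_{0}^{\infty} \mu(s) A\xi(s)\,\d s + \bigl[B(v^{\varepsilon}+\varepsilon y, v^{\varepsilon}+\varepsilon y) - B(u,u)\bigr] = \varepsilon(\sigma y - \nu A y),\\
& \pt_t \xi = T\xi + w + \varepsilon y,\\
& (w,\xi)|_{t=0} = \psi_{0}^{\varepsilon} - \phi_{0}.
\end{aligned}
\right.
\end{equation*}
The key algebraic expansion
\[
B(v^{\varepsilon}+\varepsilon y, v^{\varepsilon}+\varepsilon y) - B(u,u) = B(w, v^{\varepsilon}) + B(u, w) + \varepsilon B(v^{\varepsilon}, y) + \varepsilon B(y, v^{\varepsilon}) + \varepsilon^{2} B(y, y)
\]
isolates the term $B(u,w)$, whose pairing with $w$ vanishes by antisymmetry $b(u,w,w)=0$.

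Next I would take the $\bH$-inner product of the momentum equation with $w$ and the $\cM$-inner product of the memory equation with $\xi$, then add them. The coupling $\int_{0}^{\infty}\mu(s)(A\xi(s),w)\,\d s$ cancels exactly against $\innerM{w}{\xi}$, and the Dafermos condition \eqref{mu} yields the dissipation $\frac{\delta}{2}\normM{\xi}^{2}$. Bounding the remaining trilinear pieces via Lemma \ref{b} together with the $y$-estimates \eqref{sv}, and absorbing part of $\nu\norm{A^{\onehalf}w}^{2}$ with Young's inequality, produces an inequality of the form
\[
\frac{\d}{\d t}\norm{(w,\xi)}_{\cH}^{2} + \frac{\nu}{2}\norm{A^{\onehalf}w}^{2}
\le c\bigl(1 + \norm{A^{\onehalf} v^{\varepsilon}}^{2}\bigr)\norm{(w,\xi)}_{\cH}^{2} + c\varepsilon^{2}\beta_{1}(\theta_{t}\omega)\bigl(1 + \norm{v^{\varepsilon}}^{2} + \norm{A^{\onehalf} v^{\varepsilon}}^{2}\bigr).
\]
Gronwall's inequality then bounds $\norm{(w,\xi)(t)}_{\cH}^{2}$ by $\exp\!\bigl(c\int_{0}^{t}(1+\norm{A^{\onehalf} v^{\varepsilon}}^{2})\d\tau\bigr)$ times the initial-data term, plus a convolution of the forcing against the same exponential weight.

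To reach the stated bound, I would insert the uniform estimates from Section \ref{attractor}: Lemma \ref{lem1} controls $\norm{v^{\varepsilon}(s)}^{2}$ pointwise and $\int_{0}^{t}\norm{A^{\onehalf} v^{\varepsilon}}^{2}\d\tau$ by $c(r_{1}^{\varepsilon}(\omega)+1)$. For the pointwise factor $\norm{A^{\onehalf} v^{\varepsilon}(s)}^{2}$ inside the forcing integral (which sits only in $L^{2}_{t}$, not $L^{\infty}_{t}$), I would invoke the splitting $v^{\varepsilon}=\vL+\vN$ from \eqref{linear}--\eqref{nonlinear}: the $\vL$-contribution is swallowed by its $L^{2}_{t}\bV$-norm (producing $r_{1}^{\varepsilon}$), while Lemma \ref{lem2} provides a uniform $\bV$-bound for $\vN$ (producing $r_{3}^{\varepsilon}$). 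Combining with the tempered bound $\beta_{1}(\theta_{s}\omega)\le e^{\delta_{0}|s|/2}r(\omega)$ from \eqref{s3} and collapsing all $s$-dependence into a single $e^{ct}$ prefactor yields precisely $c e^{ct}\norm{\psi_{0}^{\varepsilon}-\phi_{0}}_{\cH}^{2} + c\varepsilon^{2} e^{ct} r(\omega)\bigl(1 + r_{1}^{\varepsilon}(\omega) + r_{3}^{\varepsilon}(\omega)\bigr)$.

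The main delicacy lies in the $\varepsilon$-cross terms $\varepsilon b(v^{\varepsilon}, y, w)$ and $\varepsilon b(y, v^{\varepsilon}, w)$: a naive application of Lemma \ref{b} would require $\norm{A v^{\varepsilon}}$, which is not uniformly controlled for the full velocity. Choosing the variant of Lemma \ref{b} that places $y$ in the slot demanding $\bW$-regularity (and then using $h\in\bW$ via \eqref{sv}) sidesteps this, and it is exactly the subsequent $\vL$--$\vN$ splitting that conjures up the two tempered factors $r_{1}^{\varepsilon}$ and $r_{3}^{\varepsilon}$ in the final bound; everything else is a routine uniqueness-type Gronwall computation.
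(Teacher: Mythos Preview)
Your proposal is correct and follows the same route as the paper: subtract the two systems, take the $\cH$-energy of the difference, bound the trilinear cross-terms via Lemma \ref{b} and \eqref{sv}, apply Gronwall, and then insert the uniform estimates from Lemmas \ref{lem1}--\ref{lem2} together with \eqref{s3}. You are in fact more explicit than the paper's own argument, which compresses the differential inequality into ``the same procedure as in Lemma \ref{lem1}'' and the appearance of $r_{3}^{\varepsilon}$ into a one-line invocation of Lemma \ref{lem2}; your identification of the $\vL$--$\vN$ splitting as the mechanism that produces $r_{3}^{\varepsilon}$ is exactly what underlies that step.
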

\begin{proof}
	Let $ \varphi = ( w , \xi )^{\tau} = \psi^{\varepsilon} - \phi $. Since $ \psi^{\varepsilon} $, $ \phi $ satisfy \eqref{SNSM4} and \eqref{DNSM}, respectively, we deduce that
	\begin{equation}\label{W}
		\left\{
			\begin{aligned}
			&\pt_t w + \nu Aw + \int_{0}^{\infty} \mu (s) A \xi (s)\d s + B( v^{\varepsilon} + \varepsilon y , v^{\varepsilon} + \varepsilon y ) - B( u , u ) = \varepsilon ( \sigma y - \nu A y ), \\
			&\pt_t \xi = T \xi + w + \varepsilon y,\\
			&w(0) = w_{0} = v_{0}^{\varepsilon} - u_0,\ \xi_{0} = \eta_0^{\varepsilon} - \eta_0.
		\end{aligned}
		\right.
	\end{equation}
	Taking the inner product with \eqref{W} by $ \varphi $ in $ \cH $, we have
	\begin{equation}\label{WE}
		\begin{aligned}
			\frac{1}{2} \frac{\d}{\d t} \left( \Vert w \Vert^2 + \Vert \xi \Vert ^2 _{\mathcal{M}} \right) + \nu \Vert A^{\frac{1}{2}} w \Vert^2
			& = \frac{1}{2} \int_{0}^{\infty} \mu'(s) \Vert A^{\frac{1}{2}} \xi \Vert ^2 \d s + \varepsilon (\xi, y(\theta_t \omega))_{\mathcal{M}} \\
			& \quad + b(u , u , w) - b(v ^{\varepsilon} + \varepsilon y(\theta_t \omega), v ^{\varepsilon} + \varepsilon y(\theta_t \omega) , w)\\
			& \quad + ( \varepsilon \left( \sigma y(\theta_t \omega) - \nu A y(\theta_t \omega)\right) , w ).\\
		\end{aligned}
	\end{equation}
	The same procedure as in Lemma \ref{lem1} yields
	\begin{equation}\label{E1w}
		\begin{aligned}
			\frac{\d}{\d t} \Vert \varphi (t,\omega,\varphi_0(\omega))\Vert^2_{\cH}
			& \leq \mathcal{Q}(t,\omega) \Vert \varphi (t,\omega,\varphi_0(\omega)) \Vert^2 _{\cH} + \mathcal{R}(t , \omega),
		\end{aligned}
	 \end{equation}
	 where
	 \begin{align*}
	 	& \mathcal{Q}(t,\omega) := \left( - \delta_0 + c\varepsilon^{2} \Vert \nabla v^{\varepsilon} \Vert^{2} \right), \\
		& \mathcal{R}(t , \omega) := c \varepsilon^{2} \beta_{1}(\theta_{ t } \omega) \left( 1 + \Vert v^{\varepsilon} \Vert^{2} + \Vert \nabla v^{\varepsilon} \Vert^{2} \right).
	 \end{align*}
	 Applying Gronwall's inequality, we obtain
	 \begin{equation}\label{phiw}
		\Vert \varphi (t,\omega,\varphi_0(\omega)) \Vert^2 _{\cH}
		\leq e^{ \int_{0}^{t} \mathcal{Q}(\tau, \omega) \d \tau } \Vert \varphi_0 (\omega) \Vert^2 _{\cH}
		 + c \int_{0}^{t} e^{ \int_{s}^{t} \mathcal{Q}(\tau, \omega) \d \tau } \mathcal{R} ( s , \omega) \d s .
	 \end{equation}
	 By means of \eqref{conclusion1}, we have
	 $$
	 	\int_{0}^{t} \mathcal{Q}(\tau,\omega) \d \tau \leq c t.
	 $$
	 It follows from \eqref{s3} and Lemma \ref{lem2} that for all $ t \geq T_{4} $,
	 $$
	 	\int_{0}^{t} e^{ \int_{s}^{t} \mathcal{Q}(\tau, \omega) \d \tau } \mathcal{R} ( s , \omega) \d s
	 	\leq c \varepsilon^{2} e^{ c t } r(\omega) ( 1 + r_{1}^{\varepsilon} (\omega) + r_{3}^{\varepsilon} (\omega) ).
	 $$
	 This completes the proof.
\end{proof}
Let $ \phi ^{\varepsilon} $ be the solution of \eqref{SNSM3} with initial conditions $ \phi_0^{\varepsilon} = ( u_{0}^{\varepsilon} , \eta_{0}^{\varepsilon} ) $ and $ \Phi ^{\varepsilon} $ be the corresponding cocycle. By \eqref{phi}, one obtains that
\begin{align*}
	\Vert \Phie ( t , \omega ) \phi_0^{\varepsilon} - \phi ( t , \omega , \phi_{0} ) \Vert _{\cH}^{2}
	& = \Vert \phi ^{\varepsilon}( t , \omega , \phi_0^{\varepsilon} ) - \phi ( t , \omega , \phi_{0} ) \Vert _{\cH}^{2} \\
	& = \Vert \psi ^{\varepsilon} ( t , \omega , \psi_{0}^{\varepsilon} ) + ( \varepsilon y (\theta_{ t } \omega) , 0 ) - \phi ( t , \omega , \phi_{0} ) \Vert _{\cH}^{2} \\
	& \leq 2 \Vert \psi^{\varepsilon} ( t , \omega , \psi_{0}^{ \varepsilon } ) -  \phi ( t , \omega , \phi_{0} ) \Vert _{\cH} ^{2} + 2 \varepsilon^{2} e^{\frac{\delta_0}{2} t} r( \omega ).
\end{align*}
Then, we immediately derive the following lemma which satisfies the first condition of Proposition \ref{upperc}.
\begin{lemma}\label{lem6}
	For a given $ 0 < \varepsilon \leq 1 $, assume that conditions on Lemma \ref{lem5} hold, then for $ \bP $-a.e. $ \omega \in \Omega $ and $ t \geq T_{4} $, we have
	$$
		\begin{aligned}
			\Vert \phi ^{\varepsilon} ( t , \omega , \phi_0^{\varepsilon} ) - \phi ( t , \omega , \phi_{0} ) \Vert _{\cH}^{2}
			& \leq c e ^{ c t } \Vert \phi_{0}^{ \varepsilon } - \phi_{0} \Vert _{\cH} ^{2} + c \varepsilon^{2} e^{ c t } r(\omega) ( 1 + r_{1}^{\varepsilon} (\omega) + r_{3}^{\varepsilon} (\omega) ),
		\end{aligned}
	$$
	where $ c > 0 $ is independent of $ \varepsilon $.
\end{lemma}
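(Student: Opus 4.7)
The plan is to reduce the claim directly to Lemma \ref{lem5} via the Ornstein--Uhlenbeck change of variables $\psi^{\varepsilon} = \phi^{\varepsilon} - (\varepsilon y(\theta_{t}\omega),0)^{\tau}$ introduced in \eqref{phi}. Recall that $\phi^{\varepsilon}(t,\omega,\phi_0^{\varepsilon}) = \psi^{\varepsilon}(t,\omega,\psi_0^{\varepsilon}) + (\varepsilon y(\theta_t\omega),0)^{\tau}$, where the shifted initial data is $\psi_0^{\varepsilon} = \phi_0^{\varepsilon} - (\varepsilon y(\omega),0)^{\tau}$. Since the deterministic solution $\phi(t,\omega,\phi_0)$ does not involve any shift, the desired estimate is obtained by comparing $\psi^{\varepsilon}$ with $\phi$ and then correcting by $\varepsilon y$ both in the terminal state and in the initial data.

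First I would apply the elementary inequality $(a+b)^2 \leq 2a^2 + 2b^2$ to split
\begin{equation*}
	\Vert \phi^{\varepsilon}(t,\omega,\phi_0^{\varepsilon}) - \phi(t,\omega,\phi_0) \Vert_{\cH}^{2}
	\leq 2 \Vert \psi^{\varepsilon}(t,\omega,\psi_0^{\varepsilon}) - \phi(t,\omega,\phi_0) \Vert_{\cH}^{2}
	+ 2 \varepsilon^{2} \Vert y(\theta_t \omega) \Vert^{2},
\end{equation*}
exactly as in the short computation preceding the statement of Lemma \ref{lem6}. The first term is immediately controlled by Lemma \ref{lem5}, yielding a bound of the form $c e^{ct} \Vert \psi_0^{\varepsilon} - \phi_0 \Vert_{\cH}^{2} + c\varepsilon^{2} e^{ct} r(\omega)(1 + r_1^{\varepsilon}(\omega) + r_3^{\varepsilon}(\omega))$.

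Next, to pass from $\Vert \psi_0^{\varepsilon} - \phi_0 \Vert_{\cH}^{2}$ to $\Vert \phi_0^{\varepsilon} - \phi_0 \Vert_{\cH}^{2}$, I apply the same splitting
\begin{equation*}
	\Vert \psi_0^{\varepsilon} - \phi_0 \Vert_{\cH}^{2}
	\leq 2 \Vert \phi_0^{\varepsilon} - \phi_0 \Vert_{\cH}^{2} + 2 \varepsilon^{2} \Vert y(\omega) \Vert^{2},
\end{equation*}
and then bound $\Vert y(\omega) \Vert^{2}$ and $\Vert y(\theta_t\omega) \Vert^{2}$ via \eqref{sv}, which gives $\Vert y(\theta_t\omega) \Vert^{2} \leq \tilde{c}^{2} |z(\theta_t\omega)|^{2} \leq \tilde{c}^{2} \beta_1(\theta_t\omega)$. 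Using the temperedness bound \eqref{s3}, this is dominated by $\tilde{c}^{2} e^{\delta_0 t/2} r(\omega)$, which, for a suitably enlarged constant $c$, is absorbed into $c \varepsilon^{2} e^{ct} r(\omega)$.

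Combining all of these estimates and again enlarging $c$ if necessary so that every growth factor is dominated by $e^{ct}$ and every random variable by $r(\omega)(1 + r_1^{\varepsilon}(\omega) + r_3^{\varepsilon}(\omega))$ delivers the stated bound. There is no serious obstacle here: the argument is bookkeeping on top of Lemma \ref{lem5}. The only minor care required is to keep the constant $c$ genuinely independent of $\varepsilon \in (0,1]$, which is automatic because all the constants produced by \eqref{sv} and \eqref{s3} depend only on $h$ and on the fixed metric dynamical system, while the $\varepsilon$-dependence is made explicit through the factors $\varepsilon^{2}$ and the tempered variables $r_1^{\varepsilon}, r_3^{\varepsilon}$ already present in Lemma \ref{lem5}.
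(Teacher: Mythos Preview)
Your proposal is correct and follows essentially the same approach as the paper: the paper itself derives the lemma from the short computation immediately preceding it (splitting via $\phi^{\varepsilon} = \psi^{\varepsilon} + (\varepsilon y(\theta_t\omega),0)^{\tau}$, applying Lemma~\ref{lem5}, and bounding the $\varepsilon y$ remainder by \eqref{sv} and \eqref{s3}). Your write-up is in fact more explicit than the paper's, since you spell out the conversion from $\Vert \psi_0^{\varepsilon} - \phi_0 \Vert_{\cH}^{2}$ to $\Vert \phi_0^{\varepsilon} - \phi_0 \Vert_{\cH}^{2}$, which the paper leaves implicit.
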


Next, we prove the third condition in Proposition \ref{upperc}.
Given $ 0 < \varepsilon \leq 1 $, from the proof of Lemma \ref{lem2}, we rewrite \eqref{K_omega} as
\begin{equation}
	K_{\varepsilon}(\omega) = \left\{ \phi \in \cH(\cO) : \Vert \phi \Vert ^2 _{\cH} \leq c_2 ( r _{2} ^{ \varepsilon }(\omega) + 1 ) \right\},
\end{equation}
that is, for every $ B = \{ B(\omega) \}_{\omega \in \Omega} \in \cD $ and $ \bP $-a.e. $ \omega \in \Omega $, there is a $ T_{3} > 0 $, independent of $ \varepsilon $, such that for all $ t \geq T_{3} $,
$$
	\Vert \Phie ( t , \theta_{ - t } \omega ) B( \theta_{ - t } \omega ) \Vert _{\cH}^{2} \leq c_2 ( r _{2} ^{ \varepsilon }(\omega) + 1 ).
$$
We also denote
\begin{equation}
	K(\omega) = \left\{ \phi \in \cH(\cO) : \Vert \phi \Vert ^2 _{\cH} \leq c_2 ( r _{2} ^{ 1 }(\omega) + 1 ) \right\}.
\end{equation}
To obtain the compactness in the next lemma, we define two sets $ \wtK_{\varepsilon}(\omega) = K_{\varepsilon}(\omega) \cap \widetilde{\cH} $ and $ \wtK(\omega) = K(\omega) \cap \widetilde{\cH} $ where $ \widetilde{\cH} $ is assigned in Lemma \ref{compactness}. Then $ \{ \wtK_{\varepsilon}(\omega) \}_{\omega \in \Omega} $ is a closed absorbing set for $ \Phie $ in $ \cD $ and
$$
	\bigcup_{0 < \varepsilon \leq 1} \wtK_{\varepsilon}(\omega) \subseteq \wtK(\omega).
$$
It follows from the invariance of the random attractor $ \left\{ \cA_{\varepsilon}(\omega) \right\}_{\omega \in \Omega} $ that
\begin{equation}\label{A_e}
	\bigcup_{0 < \varepsilon \leq 1} \cA_{\varepsilon}(\omega) \subseteq \bigcup_{0 < \varepsilon \leq 1} \wtK_{\varepsilon} (\omega)
	\subseteq \wtK(\omega).
\end{equation}
From \eqref{linear} and \eqref{nonlinear} we know that $ \Phie $ can be written as $ \Phie = \Phie_{N} + \Phie_{L} $. So we denote $ \cA_{\varepsilon}(\omega) $ by $ \cA_{\varepsilon}(\omega) = \cA_{N\varepsilon}(\omega) \cup \cA_{L\varepsilon}(\omega) $ where $ \cA_{N\varepsilon}(\omega) $ and $ \cA_{L\varepsilon}(\omega) $ are generated by $ \Phie_{N} $ and $ \Phie_{L} $ respectively.
Consequently, by Lemma \ref{lem2}, there exists $ T_{4} > 0 $, independent of $ \varepsilon $, such that for all $ t \geq T_{4} $,
$$
	\Vert \Phie_{N}(t , \theta_{ - t } \omega) \cA_{N\varepsilon} (\theta_{ - t } \omega) \Vert_{ \widetilde{\cH} ( \cO ) } ^2
	\leq c_{4}(r _{4} ^{ \varepsilon }(\omega) + 1)
	\leq c_{4}(r _{4} ^{ 1 }(\omega) + 1).
$$
According to the invariance of attractor, i.e. $ \cA _{\varepsilon} (\omega) = \Phie(t , \theta_{ - t } \omega) \cA_{\varepsilon} (\theta_{ - t } \omega) $ for all $ t \geq 0 $, $ \bP $-a.e. $ \omega \in \Omega $, we have that for every $ \phiN \in \bigcup_{0 < \varepsilon \leq 1} \cA _{N\varepsilon} (\omega) $, $ \bP $-a.e. $ \omega \in \Omega $,
\begin{equation}\label{phi_H}
	\Vert \phiN \Vert _{\widetilde{\cH} (\cO) } ^2
	\leq c_{4}(r _{4} ^{ 1 }(\omega) + 1).
\end{equation}

\begin{lemma}\label{lem7}
	The union $ \bigcup_{0 < \varepsilon \leq 1} \cA_{\varepsilon} ( \omega ) $ is precompact in $ \cH ( \cO ) $ for every $ \omega \in \Omega $.
\end{lemma}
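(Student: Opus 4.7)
The proof mirrors Lemma \ref{compactness}, but with the family index $\varepsilon$ replacing the pullback time $t_{n}$ as the variable along which compactness must be established. Fix $\omega \in \Omega$ and take an arbitrary sequence $\{\phi_{n}\}_{n \geq 1} \subset \bigcup_{0<\varepsilon\leq 1}\cA_{\varepsilon}(\omega)$ with $\phi_{n} \in \cA_{\varepsilon_{n}}(\omega)$ for some $\varepsilon_{n} \in (0,1]$; the objective is to extract a subsequence which is Cauchy in $\cH(\cO)$. By the invariance of each random attractor, for every $t \geq 0$ one can select $\phi_{0,n} \in \cA_{\varepsilon_{n}}(\theta_{-t}\omega) \subset \wtK(\theta_{-t}\omega)$ such that $\phi_{n} = \Phi^{\varepsilon_{n}}(t,\theta_{-t}\omega)\phi_{0,n}$, and decompose $\phi_{n} = \phi_{N,n} + \phi_{L,n}$ via the splittings \eqref{linear} and \eqref{nonlinear} initialised from $\phi_{0,n}$ at time $-t$.

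For a prescribed $\zeta > 0$, three uniform estimates render the decomposition effective. First, Lemma \ref{lem4} provides $T_{\ast}(\omega,\zeta)$ and $R_{\ast}(\omega,\zeta)$ independent of $n$ with
$$
	\norm{\phi_{n}}^{2}_{\cH(Q_{R_{\ast}}^{c})} \leq \zeta
$$
whenever $t \geq T_{\ast}$; the $\varepsilon$-uniformity is automatic because every $\varepsilon$-dependent quantity appearing in the proof of Lemma \ref{lem3} enters only through $r_{1}^{\varepsilon}(\omega)$ and $\varepsilon^{2} r(\omega)$, which are monotone in $\varepsilon$ on $(0,1]$ and hence dominated by $r_{1}^{1}(\omega) + r(\omega)$. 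Second, Lemma \ref{decompose} together with $\phi_{0,n} \in \wtK(\theta_{-t}\omega)$ gives
$$
	\norm{\phi_{L,n}}^{2}_{\cH} \leq e^{-2\delta_{0} t}\,\norm{\phi_{0,n}}^{2}_{\cH} \leq e^{-2\delta_{0} t}\, c_{2}\bigl(r_{2}^{1}(\omega)+1\bigr),
$$
so that $\norm{\phi_{L,n}}_{\cH}^{2} \leq \zeta$ once $t$ is sufficiently large, uniformly in $n$. Third, Lemma \ref{lem2} and \eqref{phi_H} yield a uniform bound of $\{\phi_{N,n}\}$ in $\widetilde{\cH}(\cO)$, and the compact embedding $\widetilde{\cH}(Q_{R_{\ast}}) \hookrightarrow \cH(Q_{R_{\ast}})$ furnished by Lemma \ref{N} (exactly as in the proof of Lemma \ref{compactness}) yields a subsequence, still denoted $\{\phi_{N,n}\}$, that is Cauchy in $\cH(Q_{R_{\ast}})$.

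Assembling these three ingredients through the triangle inequality, together with the pointwise bound $\norm{\phi_{N,n}}_{\cH(Q_{R_{\ast}}^{c})} \leq \norm{\phi_{n}}_{\cH(Q_{R_{\ast}}^{c})} + \norm{\phi_{L,n}}_{\cH}$, gives $\norm{\phi_{n} - \phi_{m}}_{\cH}^{2} \leq C\zeta$ for all sufficiently large $n,m$, with $C$ independent of $\zeta$. A standard diagonal argument across $\zeta_{k} \to 0$ then extracts a Cauchy, hence convergent, subsequence in $\cH(\cO)$, proving precompactness. The principal obstacle, and the only essentially new point beyond Lemma \ref{compactness}, is confirming that every random radius and tempered constant produced in Section \ref{attractor} depends on $\varepsilon$ only monotonically on $(0,1]$, so that the absorbing set $\wtK(\omega)$ and the compact subspace $\widetilde{\cH}$ continue to serve uniformly for the whole family $\{\Phi^{\varepsilon}\}_{0<\varepsilon \leq 1}$; once this monotonicity is read off the integral representations of $r_{1}^{\varepsilon},\, r_{2}^{\varepsilon},\, r_{3}^{\varepsilon},\, r_{4}^{\varepsilon}$, the remainder of the argument is a direct transcription of Lemma \ref{compactness}.
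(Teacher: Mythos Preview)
Your proposal is correct and follows essentially the same route as the paper: the identical three ingredients (the $\varepsilon$-uniform tail estimate from Lemma~\ref{lem4}, the exponential decay of the linear part from Lemma~\ref{decompose}, and the $\widetilde{\cH}$-boundedness of the nonlinear part from \eqref{phi_H} together with the compact embedding $\widetilde{\cH}(Q_R)\hookrightarrow\cH(Q_R)$) are combined in the same way, and your emphasis on the monotonicity of $r_i^{\varepsilon}$ in $\varepsilon$ is exactly how the paper passes from $K_{\varepsilon}(\omega)$ to the uniform $K(\omega)$. The only cosmetic difference is that the paper argues via total boundedness (a finite $\zeta$-net) whereas you use the sequential characterisation plus a diagonal extraction; one minor slip is that the bound on $\norm{\phi_{0,n}}_{\cH}^{2}$ should read $c_{2}\bigl(r_{2}^{1}(\theta_{-t}\omega)+1\bigr)$ rather than $c_{2}\bigl(r_{2}^{1}(\omega)+1\bigr)$, but temperedness of $r_{2}^{1}$ still makes $e^{-2\delta_{0}t}$ dominate, so the conclusion is unaffected.
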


We will prove Lemma \ref{lem7} by the method in \cite{Wang2009} with some modifications in which we use our constructed compact embedding $ \widetilde{\cH} \hookrightarrow \cH $ in a bounded ball from the proof of Lemma \ref{compactness}.
\begin{proof}[Proof of Lemma \ref{lem7}]
	To show that $ \bigcup_{0 < \varepsilon \leq 1} \cA_{\varepsilon} ( \omega ) $ is precompact in $ \cH ( \cO ) $, that is, given $ \zeta > 0 $, $ \bigcup_{0 < \varepsilon \leq 1} \cA_{\varepsilon} ( \omega ) $ has a finite covering of balls of radii less than $ \zeta $, we establish the far-field estimate and the bounded ball estimate by the compact embedding we construct.
	
	Let $ \{ \wtK_{\varepsilon}(\omega) \}_{\omega \in \Omega} $ be the random set given before. It follows from Lemma \ref{lem4} that given $ \zeta > 0 $, $ \bP $-a.e. $ \omega \in \Omega $, there exist $ T_{7} > 0 $ and $ R_{5} > 0 $, such that for all $ t \geq T_{7} $ and $ \phi_{0N}^{\varepsilon} ( \theta_{ - t } \omega ) \in \wtK ( \theta_{ - t } \omega ) $,
	\begin{equation*}
		\Vert \Phie_{N} ( t , \theta_{ - t } \omega ) \phi_{0N}^{\varepsilon} ( \theta_{ - t } \omega ) \Vert^2 _{ \cH ( Q_{R_{5}}^{c} ) }
		\leq \frac{\zeta ^2}{64}.
	\end{equation*}
	Since \eqref{A_e} holds, $ \phi_{0N}^{\varepsilon} ( \theta_{ - t } \omega ) \in \cA _{N\varepsilon} ( \theta_{ - t } \omega ) $ implies $ \phi_{0N}^{\varepsilon} ( \theta_{ - t } \omega ) \in \wtK ( \theta_{ - t } \omega ) $. Hence for every $ 0 < \varepsilon \leq 1 $, $ \bP $-a.e. $ \omega \in \Omega $, $ t \geq T_{7} $ and $ \phi_{0N}^{\varepsilon} ( \theta_{ - t } \omega ) \in \cA _{N\varepsilon} ( \theta_{ - t } \omega ) $, we have
	\begin{equation*}
		\Vert \Phie_{N} ( t , \theta_{ - t } \omega ) \phi_{0N}^{\varepsilon} ( \theta_{ - t } \omega ) \Vert^2 _{ \cH ( Q_{R_{5}}^{c} ) }
		\leq \frac{\zeta ^2}{64}.
	\end{equation*}
	Due to the invariance of $ \{ \cA _{\varepsilon} (\omega) \} _{\omega \in \Omega} $, it is clear that for $ \bP $-a.e. $ \omega \in \Omega $ and for all $ \phiN \in \bigcup_{0 < \varepsilon \leq 1} \cA_{N\varepsilon} ( \omega ) $,
	\begin{equation*}
		\Vert \phiN(x) \Vert^2 _{ \cH ( Q_{R_{5}}^{c} ) }
		\leq \frac{\zeta ^2}{64},
	\end{equation*}
	which is equivalent to
	\begin{equation}\label{phi_Hc}
		\Vert \phiN(x) \Vert _{\cH ( Q_{R_{5}}^{c} )}
		\leq \frac{\zeta}{8}.
	\end{equation}
	
	As \eqref{phi_H} indicates that $ \bigcup_{0 < \varepsilon \leq 1} \cA_{N\varepsilon} ( \omega ) $ is bounded in $ \widetilde{\cH} ( Q_{R_{5}} ) $ for $ \bP $-a.e. $ \omega \in \Omega $, by the compactness of embedding $ \widetilde{\cH} ( Q_{R_{5}} ) \hookrightarrow \cH ( Q_{R_{5}} ) $, we know that, for a given $ \zeta $, $ \bigcup_{0 < \varepsilon \leq 1} \cA_{N\varepsilon} ( \omega ) $ has finite covering of balls of radii less than $ \frac{\zeta}{8} $ in $ \cH ( Q_{R_{5}} ) $.
	
	On the other hand, Remark \ref{phiL}, \eqref{A_e} and the invariance of $ \cA_{\varepsilon} $ imply that given $ \zeta > 0 $, we have
	$$
		\norm{\phiL(x)}_{\cH (Q^c_{R_{5}})} \leq \frac{\zeta}{8},
	$$
	which means that $ \bigcup_{0 < \varepsilon \leq 1} \cA_{L\varepsilon} ( \omega ) $ has finite covering of balls of radii less than $ \frac{\zeta}{8} $ in $ \cH ( Q_{R_{5}} ) $. Then $ \bigcup_{0 < \varepsilon \leq 1} \cA_{\varepsilon} ( \omega ) $ has finite covering of balls of radii less than $ \frac{\zeta}{4} $ in $ \cH ( Q_{R_{5}} ) $.
	
	In the end, $ \bigcup_{0 < \varepsilon \leq 1} \cA_{\varepsilon} ( \omega ) $ has a finite covering of balls of radii less than $ \zeta $ in $ \cH (\cO) $.
\end{proof}
\begin{theorem}
	For $ 0 < \varepsilon \leq 1 $, the family of random attractors $ \left\{ \cA_{\varepsilon} (\omega) \right\}_{\omega \in \Omega} $ is upper semicontinuous at $ \varepsilon = 0 $, i.e., for $ \bP $-a.e. $ \omega \in \Omega $,
	$$
		\lim_{\varepsilon \rightarrow 0} \dist ( \cA_{\varepsilon}(\omega) , \cA_{0} ) = 0.
	$$
\end{theorem}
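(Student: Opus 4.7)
The plan is to deduce the theorem as an immediate consequence of Proposition \ref{upperc} with $X = \cH(\cO)$, once the three hypotheses of that proposition are verified in turn using the lemmas established in this section.

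First, I would verify hypothesis (i). Given $\varepsilon_n \to 0^+$ and $x_n \to x$ in $\cH$, applying Lemma \ref{lem6} with $\phi_0^{\varepsilon_n} := x_n$ and $\phi_0 := x$ yields
$$\norm{\Phi^{\varepsilon_n}(t,\omega) x_n - \Phi(t)x}_{\cH}^2 \leq c e^{ct}\norm{x_n - x}_{\cH}^2 + c\varepsilon_n^2 e^{ct} r(\omega)\bigl(1 + r_1^{\varepsilon_n}(\omega) + r_3^{\varepsilon_n}(\omega)\bigr).$$
The first term tends to zero by hypothesis. For the second, a glance at the explicit integral formulas for $r_1^{\varepsilon}(\omega)$ and $r_3^{\varepsilon}(\omega)$ shows that their $\varepsilon$-dependent exponential weights are dominated, via \eqref{s3}, by an $\varepsilon$-uniform integrable envelope, so the prefactor $\varepsilon_n^2$ forces the whole expression to zero.

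Second, hypothesis (ii) follows from the definition \eqref{K_omega}: $\norm{K_{\varepsilon}(\omega)}_{\cH}^2 \leq c_2\bigl(r_2^{\varepsilon}(\omega) + 1\bigr)$ with $r_2^{\varepsilon}(\omega) = r_1^{\varepsilon}(\omega) + \varepsilon^2 r(\omega)$. Passing $\varepsilon \to 0^+$ in the defining integral for $r_1^{\varepsilon}$ produces the deterministic limit $2/\delta_0$, while $\varepsilon^2 r(\omega) \to 0$; hence $\limsup_{\varepsilon \to 0^+}\norm{K_{\varepsilon}(\omega)}_{\cH}$ is bounded by a deterministic constant, as required. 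Hypothesis (iii), namely precompactness of $\bigcup_{0 < \varepsilon \leq 1}\cA_{\varepsilon}(\omega)$ in $\cH(\cO)$, is precisely the content of Lemma \ref{lem7}. Invoking Proposition \ref{upperc} then yields $\lim_{\varepsilon \to 0^+}\dist(\cA_{\varepsilon}(\omega), \cA_0) = 0$ for $\bP$-a.e.\ $\omega \in \Omega$.

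The substance of the argument lies entirely in the preceding lemmas rather than in this final assembly. In particular, the delicate step is hypothesis (iii), whose verification in Lemma \ref{lem7} rests on the full structural toolkit of this section: the splitting $\phi^{\varepsilon} = \phiL + \phiN$ with exponential decay of $\phiL$ (Lemma \ref{decompose}, Remark \ref{phiL}) and higher-order boundedness of $\phiN$ in $\widetilde{\cH}$ (Lemma \ref{lem2}), the uniform far-field estimate (Lemma \ref{lem4}), and the constructed compact embedding $\widetilde{\cH}(Q_R) \hookrightarrow \cH(Q_R)$ on bounded balls that recovers compactness in the memory component via $\overline{\mathcal{N}} \subset \cM$. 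Once those ingredients are granted, the theorem is obtained simply by collecting hypotheses and applying Proposition \ref{upperc}.
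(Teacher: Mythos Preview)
Your proposal is correct and follows essentially the same route as the paper: verify the three hypotheses of Proposition~\ref{upperc} via Lemma~\ref{lem6} for (i), the explicit bound on $K_\varepsilon(\omega)$ together with $r_1^\varepsilon(\omega)\to 2/\delta_0$ for (ii), and Lemma~\ref{lem7} for (iii), then invoke the proposition. The only cosmetic difference is that the paper phrases condition~(ii) in terms of the smaller absorbing set $\wtK_\varepsilon(\omega)=K_\varepsilon(\omega)\cap\widetilde{\cH}$, but the norm bound is identical.
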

\begin{proof}
	We know that $ \{ \wtK_{\varepsilon}(\omega) \}_{\omega \in \Omega} $ is a closed absorbing set for $ \Phie $ in $ \cD $. From the definition of $ \wtK_{\varepsilon}(\omega) $, we find that
	\begin{equation}\label{Kbound}
		\begin{aligned}
			\limsup _{\varepsilon \rightarrow 0} \Vert \wtK_{\varepsilon}(\omega) \Vert
			& \leq c_{2} ( r _{1}^{0} (\omega) + 1) \\
			& \leq c_{2} \left( \int_{ - \infty}^{0} e ^{\delta_{0} s} \d s + 1 \right) \\
			& \leq c_{2} \left( \frac{1}{\delta_{0}} + 1 \right).
		\end{aligned}
	\end{equation}
	By Lemma \ref{lem6}, we have that for $ \bP $-a.e. $ \omega \in \Omega $, and $ t \geq T_{4} $,
	\begin{equation}\label{Phiconv}
		\Phi^{\varepsilon_{n}} ( t , \omega ) \phi_{0,n} \rightarrow \Phi ( t ) \phi_{0},
	\end{equation}
	provided $ \varepsilon_{n} \rightarrow 0 $ and $ \phi_{0,n} \rightarrow \phi_{0} $ in $ \cH(\cO) $.
	Since \eqref{Phiconv}, \eqref{Kbound} and Lemma \ref{lem7} verify three conditions in Proposition \ref{upperc}, respectively, we obtain the upper continuity of random attractors for Navier-Stokes equations with memory in unbounded domains.
\end{proof}

\section*{Acknowledgements}
The authors express their  sincere thanks to Prof. Guangying Lv for his constructive comments
and suggestions that helped to improve this paper.
This work was supported by the National Natural Science Foundation of China [grant number 12271261], the Key Research and Development Program of Jiangsu Province (Social Development) [grant number BE2019725],   and the Qing Lan Project of Jiangsu Province.

\end{document}